\let\pa=\partial
\let\f=\frac
\let\p=\partial
\let\om=\omega
\def\ka{\kappa}
\def\oo{\infty}
\def\cF{{\mathcal F}}
\def\cM{{\mathcal M}}
\def\eqdef{\buildrel\hbox{\footnotesize def}\over =}
\def\Z{\mathop{\mathbb Z\kern 0pt}\nolimits}
\def\N{\mathop{\mathbb N\kern 0pt}\nolimits}
\def\Q{\mathop{\mathbb Q\kern 0pt}\nolimits}
\def\R{{\mathop{\mathbb R\kern 0pt}\nolimits}}
\def\T{{\mathop{\mathbb T\kern 0pt}\nolimits}}
\def\dive{{\mathop{\rm div}\nolimits}\,}
\def\ini{{\rm in}}
\def\sgn{{\rm sgn}}
\def\iDx{\langle D_x\rangle}
\def\iD{\langle D_x, D_y\rangle}
\def\iDt{\langle D_x,D_y+tD_x\rangle}
\def\ik{\langle k\rangle}
\def\il{\langle l\rangle}
\def\ikl{\langle k-l\rangle}
\def\ikt{\langle k,\xi+kt\rangle}
\def\ilt{\langle l,\eta+lt\rangle}
\def\iklt{\langle k-l,\xi-\eta+(k-l)t\rangle}
\def\ifD{\langle \f1{D_x}\rangle}
\def\ifl{\langle \f1{l} \rangle}
\def\ifk{\langle \f1k\rangle}
\def\ifkl{\langle \f1{k-l}\rangle}
\def\hw{\hat{\omega}}
\def\Lw{\omega^{\rm L}}
\def\hLw{\hat{\omega}^{\rm L}}
\def\NLw{\omega^{\rm NL}}
\def\hNLw{\hat{\omega}^{\rm NL}}
\def\Lu{u^{\rm L}}
\def\NLu{u^{\rm NL}}
\def\NLp{\phi^{\rm NL}}
\def\hf{\hat{f}}
\def\e{\varepsilon}
\def\eqdefa{\buildrel\hbox{\footnotesize def}\over =}
\newcommand{\Rmnum}[1]{\uppercase\expandafter{\romannumeral #1} }
\newcommand{\beq}{\begin{equation}}
\newcommand{\eeq}{\end{equation}}
\newcommand{\ben}{\begin{eqnarray}}
\newcommand{\een}{\end{eqnarray}}
\newcommand{\beno}{\begin{eqnarray*}}
\newcommand{\eeno}{\end{eqnarray*}}
 \numberwithin{equation}{section}
\newcommand{\andf}{\quad\hbox{and}\quad}
\newtheorem{thm}{Theorem}[section]
\newtheorem{lem}{Lemma}[section]
\newtheorem{rmk}{Remark}[section]
\newtheorem{col}{Corollary}[section]
\newtheorem{prop}{Proposition}[section]
\begin{document}

\title[Two-dimensional Couette flow in the whole plane]{Stability threshold of the two-dimensional Couette flow in the whole plane}

\author{Hui Li}
\address[H. Li]{School of Mathematics, Sichuan University, Chengdu 610064, P. R. China}
\email{lihui92@scu.edu.cn}

\author{Ning Liu}
\address[N. Liu]{Academy of Mathematics $\&$ Systems Science, The Chinese Academy of Sciences, Beijing 100190, P. R. China. } 
\email{liuning16@mails.ucas.ac.cn}

\author{Weiren Zhao}
\address[W. Zhao]{Department of Mathematics, New York University Abu Dhabi, Saadiyat Island, P.O. Box 129188, Abu Dhabi, United Arab Emirates.}
\email{zjzjzwr@126.com, wz19@nyu.edu}

\date{\today}

\begin{abstract}
In this paper, we study the stability threshold for the two-dimensional Couette flow in the whole plane. Our main result establishes that the asymptotic stability threshold is at most $\frac{1}{3}+$ for Sobolev perturbations with additional control over low horizontal frequencies, aligning with the threshold results in periodic domains. As a secondary outcome of our approach, we also prove the asymptotic stability for perturbations in weak Sobolev regularity with size $\nu^{\frac{1}{2}}$. 
\end{abstract}

\maketitle

\setcounter{equation}{0}

\section{Introduction}

In this paper, we study the nonlinear stability of the Coutte flow $(y,0)$ for the two-dimensional Navier-Stokes equations in $\R^2$:
\begin{equation*}(\text{NS})\qquad
\left\{\begin{array}{l}
\displaystyle \pa_t v-\nu\Delta v +v\cdot\nabla v+\nabla p=0, \qquad (t,x,y)\in\R^+\times\R^2, \\
\displaystyle \dive v = 0, \\
\displaystyle  v|_{t=0}=v_{\ini}(x,y),
\end{array}\right.
\end{equation*}
where $v(t,x,y)=(v^1,v^2)$ denotes the velocity field, $p$ denotes the pressure, and $\nu$ is a small viscosity coefficient.

Let $u=v-(y,0)$ be the perturbation of the velocity, which satisfies
\begin{equation}
\quad \left\{\begin{array}{l}
\displaystyle \pa_t u-\nu\Delta u +y\p_x u +(u^2,0) +u\cdot\nabla u+\nabla p=0, \qquad (t,x,y)\in\R^+\times\R^2, \\
\displaystyle \dive u = 0, \\
\displaystyle  u|_{t=0}=u_{\ini}(x,y).
\end{array}\right.
\end{equation}
Let $\omega =\p_y u^1-\p_x u^2$ be the vorticity, which satisfies
\begin{equation}\label{eqs:w}
\quad \left\{\begin{array}{l}
\displaystyle \pa_t \om-\nu\Delta \omega +y\p_x \omega +u\cdot\nabla \omega=0, \qquad (t,x,y)\in\R^+\times\R^2, \\
\displaystyle  u = \nabla^\bot (-\Delta)^{-1} \omega =(-\p_y, \p_x)(-\Delta)^{-1} \omega, \\
\displaystyle  \omega|_{t=0}=\omega_{\ini}(x,y).
\end{array}\right.
\end{equation}

\subsection{Backgrounds}
The study of the stability of Couette flow has been a prominent topic in fluid mechanics since the pioneering works of Kelvin \cite{Kelvin1887}, Reyleigh \cite{Rayleigh1880}, Orr \cite{Orr1907} and Sommerfeld \cite{Sommerfeld1908}. In \cite{Kelvin1887}, Kelvin derived the solution formula to the following linearized equation of \eqref{eqs:w}: 
\begin{equation}
\p_t \omega +y\p_x \omega -\nu \Delta \omega=0 \andf \omega|_{t=0}=\omega_{\ini}.
\end{equation}
Let $\hw(t,k,\xi)$ denote the Fourier transform of $\omega(t,x,y)$. The Kelvin's solution is 
\begin{equation}
\hw(t,k,\xi)=\hw_{\ini}(k,\xi+kt) e^{-\nu {\int_0^t} |k|^2+|\xi+k(t-s)|^2 ds},
\end{equation}
which satisfies the following two linear estimates:
\begin{align}
&|\hw(t,k,\xi)| \leq C |\hw_\ini(k,\xi+kt)| e^{-c\nu^\f13 |k|^\f23 t}, \label{eq1.5a}
\\
&|\hat{\phi}(t,k,\xi)| \leq C \langle t\rangle^{-2}\f{1+|k|^2+|\xi+kt|^2}{|k|^4} |\hw_\ini(k,\xi+kt)|e^{-c\nu^\f13 |k|^\f23 t},\label{eq1.6}
\end{align}
where $\phi=\Delta^{-1}\omega$ is the associated stream function. The estimate \eqref{eq1.5a} is the enhanced dissipation, and the estimate \eqref{eq1.6} is the inviscid damping. Both stability mechanisms allow us to study the asymptotic stability of the full nonlinear system. 

On periodic domain $\T_x\times \R_y$, the Fourier frequency $k$ belongs to $\Z$. In such case, \eqref{eq1.5a} demonstrates that the nonzero modes of $\omega$ decay on a time scale $t\gtrsim \nu^{-\f13}$, significantly faster than the standard heat dissipation scale $t\gtrsim \nu^{-1}$. This phenomenon is known as the enhanced dissipation \cite{albritton2022enhanced}.
 Furthermore, \eqref{eq1.6} implies a polynomial decay of $\phi$, known as the inviscid damping and first observed by Orr in \cite{Orr1907}. For the inviscid case ($\nu=0$), the nonlinear inviscid damping was first proven by Bedrossian and Masmoudi \cite{BM2015}, see also \cite{IJ2018}. Recent progress in \cite{IJ2020, MasmoudiZhao2020} has shown that nonlinear inviscid damping holds for stable monotonic shear flows. 
 
These stability mechanisms are weaker on the whole plane $\R^2$. It is still an open problem whether the asymptotic stability of Couette flow holds for the Euler equation in the whole plane setting. There are two main difficulties: 
\begin{enumerate}
\item Both linear decay estimates are weaker since $k\in \mathbb{R}$: The linear enhanced dissipation estimate in \eqref{eq1.5a} indicates the enhanced dissipation rate slows for very low horizontal frequencies. Meanwhile, the linear inviscid damping estimate in \eqref{eq1.6} reveals that the additional polynomial decay in inviscid damping is accompanied by a singularity of $\f1{|k|}$.
\item  The nonlinear echos are stronger, which will be discussed in section \ref{sec:idea}. It motivates us to study the echos in the $\mathbb R^2$.
\end{enumerate}

This paper focuses on the stability problem for the Navier-Stokes equation with small viscosity $\nu>0$. Mathematically, Bedrossian, Germain and Masmoudi \cite{BGM2017} formulated the transition threshold problem:

{\it 
Given a norm $\|\cdot\|_X$, find a $\beta=\beta(X)$ so that 
\begin{itemize}
\item[] $\|u_{\ini}\|_{X}\leq \nu^\beta$ $\Rightarrow$ stability, enhanced dissipation and inviscid damping,
\item[] $\|u_{\ini}\|_X \gg \nu^\beta$ $\Rightarrow$ instability.
\end{itemize}
We then say $\beta$ is the transition threshold.
}

On the domain $\T_x\times\R_y$, the following important results are known:
\begin{itemize}
\item If $X$ is Gevrey class $2_-$, then \cite{BMV2016} shows $\beta\leq 0$, and \cite{DM2023} showed $\beta\geq 0$.
\item If $X$ is Sobolev space $H^{\log}_xL^2_y$, then \cite{BVW2018,MasmoudiZhao2020cpde} showed $\beta\leq \f12$ and \cite{LiMasmoudiZhao2022critical} showed $\beta\geq \f12$.
\item If $X$ is Sobolev space $H^\sigma$ ($\sigma\geq 2$), then \cite{MasmoudiZhao2019,wei2023nonlinear} showed $\beta\leq \f13$.
\item If $X$ is Gevrey class $\f1s$ with $s\in[0,\f12]$, then \cite{LMZ2022G} showed $\beta\leq \f{1-2s}{3(1-s)}$.
\end{itemize}

We also refer to the recent works \cite{BHIW2023,bedrossian2024uniform,CLWZ2020} for the stability threshold of the 2D Couette flow in a finite channel, and \cite{BGM2017,BGM2020,BGM2015,ChenWeiZhang2020,WeiZhang2020}
for the stability of the 3D Couette flow.

The goal of this paper is to study the optimal stability threshold in Sobolev-type spaces on the whole plane $\R^2$. More precisely, motivated by \eqref{eq1.5a} and \eqref{eq1.6}, we ask the following question:

{\it 
Given two norms $\|\cdot\|_X$ and $\|\cdot\|_Y$ $(X, Y\subset L^2(\R^2))$, find a $\beta=\beta(X, Y)$ as small as possible such that for the initial data satisfying $\|\omega_{\ini}\|_X\ll \nu^\beta$, and for all $T>0$,
\begin{equation}\label{question}
\sup_{t\in[0,T]}\left\|e^{c\nu^\f13|D_x|^\f23t}\omega(t)\right\|_{Y} + \left\|e^{c\nu^\f13|D_x|^\f23t}\p_x u(t)\right\|_{L^2([0,T];Y)}
\leq C \|\omega_{\ini}\|_{X}.
\end{equation}
}

Recent works \cite{wang2024transition} and \cite{arbon2024} provided some insights into this question. The best stability threshold achieved so far is the smallness $\nu^\f12(1+\log\f1\nu)^{-\f12} $ established in \cite{arbon2024}.
Notably, their results do not directly address question  \eqref{question}. In \cite{arbon2024}, the authors continued the methods in \cite{ZelatiGallay2023} to prove decay for $\hw(t,k,\xi)$ at enhanced dissipation rate $e^{-c\nu^\f13|k|^\f23 t}$ when $|k|\geq \nu$ and a decay rate $e^{-c\nu^{-1}|k|^2t}$ for lower frequencies $|k|\leq \nu$, which they call the Taylor dispersion and refer to \cite{aris1956dispersion, taylor1954dispersion}. Since enhanced dissipation is strictly faster than Taylor dispersion for the frequencies $|k|\leq \nu$, the estimates \eqref{question} are hard to prove.

\subsection{Main results}
In this paper, we always denote $\langle a\rangle=(1+a^2)^\f12$ and $\langle a, b\rangle=(1+a^2+b^2)^\f12$. For any function $\varphi: \mathbb{R}\to \mathbb{R}$, we denote the associated Fourier multiplier by $\varphi(D_x)$, namely,
\begin{align*}
\widehat{(\varphi(D_x)f)}(k)=\varphi(k)\hat{f}(k). 
\end{align*}

Our main result is to prove the stability threshold $\f13+$:
\begin{thm}\label{thm2}
For any $\delta>0$, $0<\epsilon<\f12$, there exist $0<c, \nu_0<1$ such that for all $0<\nu\leq \nu_0$, if the initial data $\omega_{\ini}$ satisfies
\begin{equation}\label{smallness2}
\left\|\iD^{6} \ifD^4 \omega_{\ini} \right\|_{L^2}+\left\|\iD^5 \ifD^4 \omega_{\ini} \right\|_{L^1}\leq  \nu^{\f13+\delta},
\end{equation}
then \eqref{eqs:w} has a unique global solution $\omega(t)$ satisfying the following  stability estimates:
\begin{itemize}
\item[(1)] enhanced dissipation estimate:
\begin{equation}\label{eq1.7}
\sup_{t\in[0,\oo)}\left\| e^{c\nu^\f13 \lambda(D_x) t} \iDx \ifD^\epsilon \omega(t)\right\|_{L^2} \leq C \nu^{\f13+\delta},
\end{equation}
\item[(2)] inviscid damping estimate:
\begin{equation}\label{eq1.8}
\left(\int_0^\oo\left\| e^{c\nu^\f13 \lambda(D_x) t} \iDx \ifD^\epsilon \p_x u(t)\right\|_{L^2}^2\ dt\right)^\f12 \leq C \nu^{\f13+\delta},
\end{equation}
\end{itemize}
where we denote \begin{equation}\label{def:lambda}
\lambda(k)=\min(1,|k|^\f23).
\end{equation}
\end{thm}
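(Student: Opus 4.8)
The plan is to set up a bootstrap argument in the moving frame. First I would pass to the variables adapted to the Couette flow: writing $\omega(t,x,y)$ in terms of $f(t,x,z)$ with $z=y-tx$ (equivalently, on the Fourier side, tracking $\hat\omega(t,k,\xi)$ via the shear $\xi\mapsto\xi+kt$), the transport term $y\partial_x$ is removed and the main equation becomes $\partial_t f - \nu(\partial_x^2 + (\partial_z - t\partial_x)^2)f = -u\cdot\nabla_{x,z} f$, with $u$ recovered from $f$ through the time-dependent Biot–Savart law $\Delta_t^{-1}$, $\Delta_t = \partial_x^2 + (\partial_z-t\partial_x)^2$. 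The linear part then reproduces exactly the Kelvin semigroup, so that the linear estimates \eqref{eq1.5a}--\eqref{eq1.6} govern the decay; the nonlinear analysis must close in a norm compatible with the weights in \eqref{smallness2}. I would introduce a Fourier multiplier energy functional of the schematic form
\begin{equation*}
E(t) = \left\| e^{c\nu^\f13\lambda(D_x)t}\, m(t,D_x,D_z)\, \iD^{?}\ifD^{?} f(t)\right\|_{L^2}^2,
\end{equation*}
where $m(t,k,\xi)$ is a carefully designed time-growing/decaying multiplier (a ghost/Fourier-multiplier weight of the type used in \cite{BM2015,MasmoudiZhao2019,wei2023nonlinear}) whose role is to absorb the nonlinear echo cascade: $m$ should lose a controlled amount of regularity at the critical times $t\approx |\xi/k|$ and its time derivative $\partial_t m/m$ should be negative there, producing a good \emph{commutator/CK (Cauchy–Kovalevskaya) term} that dominates the resonant part of the nonlinearity. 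In addition, I would keep a separate $L^1$-in-$(x,z)$ (i.e. $L^\infty$-in-Fourier with weights) quantity propagated alongside, since \eqref{smallness2} controls an $L^1$ norm of the data; this is exactly the low-horizontal-frequency control the theorem advertises, and it is needed because on $\R^2$ the factor $1/|k|^4$ in \eqref{eq1.6} is genuinely singular and cannot be handled by $L^2$ alone.

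The key steps, in order, would be: (i) local existence and propagation of regularity for \eqref{eqs:w}, standard for Navier–Stokes in $\R^2$, giving a solution on a maximal interval $[0,T^*)$ on which the bootstrap quantities are finite; (ii) state the bootstrap hypothesis — assume on $[0,T]$ that the left-hand sides of \eqref{eq1.7}--\eqref{eq1.8} (plus the auxiliary $L^1$-type norm) are bounded by, say, $8C\nu^{\f13+\delta}$ — and aim to improve the constant to $4C$; (iii) energy estimate: differentiate $E(t)$, isolate the dissipation term $\nu\|(\partial_z-t\partial_x)(\cdots)\|^2 + \nu\|\partial_x(\cdots)\|^2$ and the enhanced-dissipation gain $-c\nu^\f13\lambda(k)$ coming from the exponential weight, and the negative CK term from $\partial_t m$; (iv) bound the nonlinear term $\langle u\cdot\nabla f, \cdots\rangle$ by splitting into the \emph{reaction}, \emph{transport}, and \emph{remainder} pieces à la Bedrossian–Masmoudi: the reaction term (where $f$ is at low frequency, $u$ at the output frequency) is the dangerous echo term and must be absorbed by the CK term; the transport term (where the derivative hits the high-frequency factor) is handled by the dissipation; remainders are lower order; (v) for the inviscid damping output, use \eqref{eq1.6} in the moving frame, i.e. $\|\partial_x u\|\lesssim \langle t\rangle^{-2}\iD^2\ifD^{?}\|f\|$ type bounds, integrate in time against the $\langle t\rangle^{-2}$ gain, and feed the resulting control back into (iv); (vi) close the bootstrap and conclude $T^*=\infty$ by the standard continuation criterion.

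I expect the main obstacle to be step (iv), specifically the \emph{reaction term at low horizontal frequency}. On $\T_x\times\R_y$ the echo chains are controlled because $|k|\ge 1$, but here $k\in\R$ can be arbitrarily small, which both weakens the enhanced-dissipation gain ($\lambda(k)=|k|^{2/3}$ degenerates) and sharpens the $1/|k|$ singularity in the Biot–Savart law, so the multiplier $m$ must be engineered to behave well simultaneously in the echo regime $t\sim|\xi/k|$ and in the low-$k$ regime; the $L^1$-in-$x$ control built into \eqref{smallness2} (via the $\ifD^4$ and $\iD^5$ weights) is what must be used to tame the low-$k$ singularity, and threading that through the nonlinear estimate — making the $L^2$ and $L^1$ bootstrap quantities close \emph{together} — is the delicate point. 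A secondary difficulty is the time-growth of the unweighted norm: unlike the Gevrey setting, in Sobolev regularity one cannot afford polynomial-in-$t$ losses at each echo, so the accounting of how much regularity $m$ consumes over the whole echo cascade must be exactly balanced against the $6$ and $5$ derivatives assumed in \eqref{smallness2}, and against the $\f13+\delta$ power of $\nu$ — the $\delta>0$ slack is presumably exactly what pays for the logarithmically-divergent sum over echoes, mirroring the $\f13$ (not $\f13+$) results of \cite{MasmoudiZhao2019,wei2023nonlinear} only with an extra margin.
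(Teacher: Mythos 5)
Your high-level architecture (moving frame, weighted Fourier-multiplier energy with a CK term, reaction/transport splitting, bootstrap) is the right genus of argument, but it is missing the two ideas that make the paper's proof close, and the step you yourself flag as ``the delicate point'' is exactly where your plan breaks down. First, you propose to propagate the $L^1$-type / strong low-frequency control \emph{through the nonlinear bootstrap}, ``making the $L^2$ and $L^1$ bootstrap quantities close together.'' This cannot work as stated: the nonlinear paraproduct estimates only transfer Fourier weights $\ifD^{\epsilon}$ with $\epsilon<\f12$ (this is the content of the case $2|k|<|k-l|$ in the proof of Proposition \ref{prop3.1}, where $L^2_k$-integrability of $\ifk^{\f12-2\epsilon}$ is indispensable), whereas the echo analysis at threshold $\nu^{1/3}$ demands weights at least as strong as $\ifD^{1/2}$ or an $L^1$ control. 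The paper resolves this not by propagating the strong norms nonlinearly but by quasi-linearizing: $\omega=\Lw+\NLw$, where $\Lw$ solves the \emph{linear} equation with the full data (so the $\ifD^4$ weight and the $L^1$ bound propagate for free via the explicit Kelvin formula), and $\NLw$ has zero data and is shown to be of size $\nu^{2/3+2\delta}$ --- the extra factor $\nu^{1/3+\delta}$ is precisely what pays for the $t\sim\nu^{-1/3}$ loss in the reaction term $I_2$. Your proposal has no substitute for this decomposition, and without it the low-horizontal-frequency singularity in the reaction term is not absorbable.

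Second, your multiplier $m$ is described as a Bedrossian--Masmoudi-type weight that ``loses a controlled amount of regularity at the critical times $t\approx|\xi/k|$.'' On $\R^2$ the critical times form a continuum in the frequency $l$, so the critical-time intervals overlap and a weight that pays at each of them loses an uncontrolled amount; moreover in Sobolev regularity there is no regularity budget to spend. The paper instead builds a \emph{bounded} ghost weight $\cM_3$ by integrating fuzzified critical-time profiles over $l\in\R$ (with a convergence factor $\ifl^{-1-\kappa}$), whose CK term $\Upsilon$ dominates the resonant kernel only after a H\"older interpolation (see \eqref{eq4.22}) that forces $\epsilon>\f{1-\delta}2$ --- this, not a logarithmically divergent sum over echoes, is what the $\delta$ in $\nu^{\f13+\delta}$ pays for. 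A further device you omit is the two-time-scale split at $T_0=\nu^{-1/6}$: for $t\le\nu^{-1/6}$ the paper uses commutator/transport estimates in $H^3\cap W^{2,p}$ (no $\ifD^\epsilon$ weight, so commutators are harmless), and only for $t\ge\nu^{-1/6}$ the multiplier energy, where the inviscid damping factor $\langle t\rangle^{-1}\le\nu^{1/6}$ supplies the missing smallness in the transport and $I_1$ terms. These are not cosmetic choices; without them your steps (iii)--(iv) do not close.
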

As a side result, we improve Theorem 1.1 in \cite{arbon2024}:
\begin{thm} \label{thm1}
For any $m>\f12, 0<\epsilon<\f12$, there exists $0<c,\e_0<1$ such that for all $0<\nu\leq 1$ and $0<\e\leq \e_0$, if the initial data $\omega_{\ini}$ satisfies
\begin{equation}\label{smallness1}
\left\|\iDx^m \ifD^\epsilon \omega_{\ini} \right\|_{L^2}\leq \e \nu^\f12,
\end{equation}
then \eqref{eqs:w} has a unique global solution $\omega(t)$ satisfying the following  stability estimates:
\begin{itemize}
\item[(1)] enhanced dissipation estimate:
\begin{equation}\label{eq1.4}
\sup_{t\in[0,\oo)}\left\| e^{c\nu^\f13 |D_x|^\f23 t} \iDx^m \ifD^\epsilon \omega(t)\right\|_{L^2} \leq C \e \nu^\f12,
\end{equation}
\item[(2)] inviscid damping estimate:
\begin{equation}\label{eq1.5}
\left(\int_0^\oo\left\| e^{c\nu^\f13 |D_x|^\f23 t} \iDx^m \ifD^\epsilon \p_x u(t)\right\|_{L^2}^2\ dt\right)^\f12 \leq C \e \nu^\f12.
\end{equation}
\end{itemize}
\end{thm}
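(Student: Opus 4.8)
The plan is to set up a bootstrap (continuity) argument on a weighted, time-growing energy functional and to exploit the linear enhanced dissipation estimate \eqref{eq1.5a} together with the inviscid damping estimate \eqref{eq1.6}, paying careful attention to the $\ifD^\epsilon$ weight which encodes the control of low horizontal frequencies. First I would pass to the coordinates moving with the flow, setting $f(t,x,y)=\omega(t,x+ty,y)$ (equivalently working on the Fourier side with the shifted frequency $\xi+kt$), so that the linear transport term $y\p_x\omega$ is absorbed and \eqref{eqs:w} becomes $\p_t f-\nu(\p_x^2+(\p_y-t\p_x)^2)f+U\cdot\nabla_{x,y}f=0$, where $U$ is the velocity in the new frame. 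On the Fourier side the stream function satisfies $\hat\phi(t,k,\xi)=-(k^2+(\xi-kt)^2)^{-1}\hat f(t,k,\xi)$, which is exactly where the inviscid-damping decay $\langle t\rangle^{-2}$ and the $\tfrac1{|k|}$ singularity of \eqref{eq1.6} come from. I would then define the energy
\begin{equation*}
E(t)=\left\|e^{c\nu^{1/3}|D_x|^{2/3}t}\,\iDx^m\,\ifD^\epsilon\, f(t)\right\|_{L^2}^2,
\end{equation*}
and a companion dissipation term
\begin{equation*}
D(t)=\nu\left\|e^{c\nu^{1/3}|D_x|^{2/3}t}\,\iDx^m\,\ifD^\epsilon\,\langle D_x,D_y-tD_x\rangle f(t)\right\|_{L^2}^2,
\end{equation*}
and aim to prove $\frac{d}{dt}E(t)+D(t)\le (\text{nonlinear error})$, where the nonlinear error is controlled by $E(t)^{1/2}D(t)$ or $E(t)^{3/2}$ up to constants, so that the smallness hypothesis \eqref{smallness1} of size $\e\nu^{1/2}$ closes the loop.

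The key steps, in order, are: (i) verify the commutator/Fourier-multiplier identities that let the time derivative of the exponential weight be balanced against the viscous dissipation — this is where the precise form $\lambda(k)=|k|^{2/3}$ (here, in Theorem \ref{thm1}, without the $\min$ cutoff) matters, since one needs $c\nu^{1/3}|k|^{2/3}\le \tfrac12\nu(k^2+(\xi-kt)^2)$ on the relevant time interval and an absolute bound otherwise; (ii) estimate the nonlinear interaction $\widehat{U\cdot\nabla f}$ in the weighted $L^2$ norm by splitting into the transport part and the stretching part, using that $U=\nabla^\perp\phi$ with $\phi$ as above, so that the worst factor $\p_x\phi$ gains both a power of $\langle t\rangle^{-1}$ (inviscid damping, integrable in $L^2_t$) and, crucially, the $\ifD^\epsilon$ weight tames the $\tfrac1{|k|}$ singularity at low frequency; (iii) control the zero-mode (the $k=0$ component of $f$, i.e. the $x$-average), which does not enjoy enhanced dissipation, by a separate heat-type estimate — in $\R^2$ this is a genuine point of care and is exactly what forces the $\nu^{1/2}$ size and the $\ifD^\epsilon$ low-frequency weight; (iv) combine (i)–(iii) into the differential inequality, integrate, and run the bootstrap: assuming $\sup_{[0,T]}E(t)^{1/2}+(\int_0^T D)^{1/2}\le 2C_0\e\nu^{1/2}$ on a maximal interval, show the right-hand side is in fact $\le \tfrac32 C_0\e\nu^{1/2}$ for $\e_0$ small, hence $T=\infty$; (v) translate $E$ and $\int D$ back to the estimates \eqref{eq1.4} and \eqref{eq1.5} for $\omega$ and $\p_x u$, noting $\p_x u=\p_x\nabla^\perp\phi$ and that on the Fourier side $|\widehat{\p_x u}|\lesssim \langle t\rangle^{-1}|\hat f|$ plus the enhanced-dissipation factor, which makes the $L^2_t$ integral converge.

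The main obstacle I expect is step (iii) together with the low-frequency part of step (ii): on $\R^2$ the horizontal frequency $k$ ranges over all of $\mathbb R$, so as $k\to0$ both the enhanced dissipation rate $\nu^{1/3}|k|^{2/3}$ and the inviscid-damping gain degenerate, and the nonlinear term $\p_x\phi\,\p_y f$ carries a factor $\tfrac{k}{k^2+(\xi-kt)^2}$ that is not uniformly small. The role of the weight $\ifD^\epsilon=\langle\tfrac1{D_x},\tfrac1{D_y}\rangle^\epsilon$ is precisely to supply an extra $|k|^{-\epsilon}$ (and $|\xi|^{-\epsilon}$) at low frequency so that, after Young/Cauchy–Schwarz in the convolution defining the nonlinearity, the singular factor is integrable; one must check that this weight is \emph{not destroyed} by the nonlinear interaction, i.e. that $\ifD^\epsilon$ behaves subadditively enough on frequency convolutions (using $\langle\tfrac1{k}\rangle^\epsilon\lesssim\langle\tfrac1{l}\rangle^\epsilon\langle\tfrac1{k-l}\rangle^\epsilon$ on the relevant regions, and handling the region where $l$ or $k-l$ is large separately via the $\iDx^m$ Sobolev weight). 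Balancing these two competing weights, and simultaneously keeping the time integrability of $\p_x\phi$ in $L^2_t$, is the technical heart of the argument; once it is in place, the bootstrap is routine and the size $\e\nu^{1/2}$ emerges from requiring the nonlinear error, which scales like $\nu^{-1/2}E^{3/2}$ after using $\int_0^\infty D\lesssim \e^2\nu$, to be reabsorbed.
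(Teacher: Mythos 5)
Your overall architecture (a weighted energy functional plus a bootstrap closing at size $\e\nu^{\f12}$, with the nonlinearity absorbed as $\nu^{-\f12}E^{\f12}\times(\text{dissipation})$) matches the paper's, but two of the ingredients you rely on fail precisely in the regimes that matter, and the devices that replace them are the actual content of the proof. First, the decay mechanisms. You propose to balance $c\nu^{\f13}|k|^{\f23}$ pointwise against the viscous dissipation $\nu(k^2+(\xi-kt)^2)$, with ``an absolute bound otherwise,'' and in steps (ii) and (v) you invoke pointwise inviscid damping $|\widehat{\p_x u}|\lesssim\langle t\rangle^{-1}|\hat f|$. Neither survives scrutiny: near the critical time, where $|\xi-kt|\lesssim\nu^{-\f13}|k|^{\f13}$, the viscous term is only about $\nu k^2\ll\nu^{\f13}|k|^{\f23}$ for small $|k|$, so your differential inequality carries an unabsorbed growth term (an absolute bound per frequency does not feed back into a coercive energy inequality usable against the nonlinearity); and the pointwise bound $\f{k^2}{k^2+(\xi-kt)^2}\lesssim\langle t\rangle^{-2}\f{\langle k,\xi\rangle^2}{k^2}$ costs two derivatives in the moving frame, which are not controlled under \eqref{smallness1} (only $\iDx^m\ifD^\epsilon\omega\in L^2$, with no $y$-regularity). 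The paper instead uses the ghost multipliers $\cM_1,\cM_2$ of Section~2: the transported derivatives $k\p_\xi\cM_1\geq\f14\nu^{\f13}|k|^{\f23}-\f12\nu|\xi|^2$ and $k\p_\xi\cM_2=\f{k^2}{k^2+\xi^2}$ produce, directly in the energy identity, the coercive terms $\nu^{\f13}\||D_x|^{\f13}\omega\|_{L^2_tL^2}^2$ and $\|\p_x\nabla\phi\|_{L^2_tL^2}^2$; the latter is the $L^2_t$-integrated inviscid damping that yields \eqref{eq1.5} without any pointwise decay.

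Second, the low-frequency weight. Your subadditivity $\ifk^\epsilon\lesssim\ifl^\epsilon\ifkl^\epsilon$ fails exactly in the hardest region $2|k|<|k-l|$: there $|l|\sim|k-l|$ can be of order one while $|k|\to0$, so the left side blows up and the right side stays bounded; and on $\R^2$ there is no separate ``zero mode'' to treat with a heat estimate, since the $x$-average is a measure-zero trace in $k$. The paper's resolution in that region is the redistribution \eqref{eq3.2a}, $|k-l|^{-\f12}\le\ifl^\epsilon\ifkl^\epsilon(1+\ifk^{\f12-2\epsilon})$, which moves the singular factor onto the output frequency $k$ and then pays for it through the $L^2_k$-integrability of $(\ifk^\epsilon+\ifk^{\f12-\epsilon})\ik^{-m}$ — this is precisely where $0<\epsilon<\f12$ and $m>\f12$ enter, and it is the step your sketch would need to supply before the bootstrap can close.
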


Some remarks are given in order. 
\begin{rmk}
 The additional $\nu^\delta$ in \eqref{smallness2} can be replaced by logarithmic corrections.
\end{rmk}

\begin{rmk}
{  The distinction between the decay rate $\nu^\f13\lambda(k)$ and 
the one in \eqref{question} occurs only in the range where $|k|>1$.  When $|k|>1$, since both the rates are already faster than the enhanced dissipation rate in the torus domains, one can modify the question \eqref{question} slightly.
}\end{rmk}

\begin{rmk}\label{rmk1.3}
{  The assumption \eqref{smallness2} is far from sharp conditions, but we take it for simplicity of proof. The order $\ifD^4$ we chose in \eqref{smallness2} can help us to prove a better inviscid damping result such that $\omega=\Lw+\NLw$ with $\Lw$ satisfying the point-wise decay as in \eqref{eq1.6}:
$$
\left\|e^{c\nu^\f13|D_x|^\f23 t} \iDt^4 \ifD^\epsilon  \Delta^{-1}\Lw\right\|_{L^2}\leq C \langle t\rangle^{-2} \nu^{\f13+\delta},
$$
and $\NLu=\nabla^\perp\Delta^{-1}\NLw$ satisfying the $L^2_t$ decay as \eqref{eq1.8} but with a size much smaller than initial perturbation:
$$
\left(\int_0^\oo\left\| e^{c\nu^\f13 \lambda(D_x) t} \iDx \ifD^\epsilon \p_x \NLu(t)\right\|_{L^2}^2\ dt\right)^\f12 \leq C \nu^{\f23+2\delta}.
$$
Thus, we have a good control of the terms from the nonlinear interactions between two linear solutions, see \eqref{source term:LL} and \eqref{eq4.19}.
With some small modifications of our proof of Theorem \ref{thm2}, one can easily weaken the assumption \eqref{smallness2} to 
$$
\left\|\iD^{6} \ifD^2 \omega_{\ini} \right\|_{L^2\cap L^1}\leq  \nu^{\f13+\delta}.
$$

A toy model is introduced and discussed in the next subsection to show that some additional control of low frequency, such as at least $\|\ifD^{\f12} \omega_{\ini}\|_{L^2}$ or $\|\omega_{\ini}\|_{L^1}$, is necessary for proving the $\f13$ stability threshold. Since the assumption of Theorem \ref{thm2} is not optimal, we would like to propose the following open problem:

{\it 
If $X= H^m(\R^2)\cap W^{m,1}(\R^2)\cap \dot{H}^{-1}(\R^2)$ for some large enough $m$ and $Y=L^2(\R^2)$, does $\beta \geq \f13$ imply stability?
}
}\end{rmk}

\begin{rmk}
Comparing Theorem \ref{thm1} presented above with Theorem 1.1 in \cite{arbon2024}, our result makes improvements in the following aspects:
\begin{itemize}
\item In Theorem 1.1 of \cite{arbon2024}, the time decay rate is given as the Taylor dispersion rate $\f{|k|^2}\nu$ for small horizontal frequencies $|k|\leq \nu$. We improve it to the enhanced dissipation rate $\nu^\f13|k|^\f23$ for all frequencies.
\item We remove the logarithmic correction in the smallness assumption $\nu^\f12(1+\log \f1\nu)^{-\f12}$ in Theorem 1.1 of \cite{arbon2024}.
\item In Theorem 1.1 of \cite{arbon2024}, the authors assumed a lower order norm $\|\omega_{\ini,k}\|_{L^\oo_kL^2}$ to prevent potential concentration near frequency $k=0$. Here, we weaken the assumption to be some norm $\|\iDx^m \ifD^\epsilon \omega_{\ini} \|_{L^2}$, which is nearly only $H^{m,0}$, as $\epsilon\to 0_+$. 
\end{itemize}
We emphasize that the method in \cite{arbon2024} can deal with different domains with Navier-slip boundary conditions. The Fourier multiplier method in this paper may not be easy to directly apply in those cases. It is interesting to study the stability threshold problem \eqref{question} in the finite channel setting $\mathbb{R}\times [0,1]$ with different boundary conditions. 
\end{rmk}

\subsection{Challenges and ideas}\label{sec:idea}

In this subsection, we introduce a simplified toy model to highlight the additional challenges encountered when addressing the stability problem on $\R^2$, as opposed to prior studies on periodic domains.

Motivated by some observations in \cite{BM2015}, we reformulate \eqref{eqs:w} in new variables $(z,y)=(x-ty,y)$ as 
$$
\p_t f-\nu\Delta_L f= \nabla^\perp(-\Delta_L)^{-1} f \cdot \nabla f,
$$
where $f(t,z,y)=\omega(t,z+ty,y)$ and $\Delta_L=\p_z^2+(\p_y-t\p_z)^2$. Applying the Fourier transform yields the equation
\begin{equation}\label{eq1.10}
\begin{aligned}
&\p_t \hf(t,k,\xi)-\nu(|k|^2+|\xi-kt|^2)\hf(t,k,\xi)\\
&\qquad\qquad=\int_{\R^2} \f{\eta(k-l)-l(\xi-\eta)}{|l|^2+|\eta-lt|^2} \hf(t,l,\eta)\hf(t,k-l,\xi-\eta)dld\eta.
\end{aligned}
\end{equation}

Following the approach of prior works (e.g., (1.11) in \cite{LMZ2022G}), we consider the following toy model on the Fourier side:
\begin{equation}\label{eq1.11}
\p_t \hf(t,k,\xi)-\nu(|k|^2+|\xi-kt|^2)\hf(t,k,\xi)=\int_{\R} \f{\xi(k-l)}{|l|^2+|\xi-lt|^2} \hf(t,l,\xi)\hf(t,k-l,0)dl.
\end{equation}

An easy calculation shows that for any fixed frequecies $(k, l,\xi)$, the coefficient $\f{\xi(k-l)}{|l|^2+|\xi-lt|^2}$ attains its maximum value at $t=\frac{\xi}{l}$. This time is called the Orr critical time. In the periodic setting $x\in \mathbb{T}$, a cascade growth happens near each critical time \cite{BM2015}, namely, the critical time interval $[\frac{2\xi}{2l+1}, \frac{2\xi}{2l-1}]$ which contains only one critical time. Since $l\in\mathbb{Z}\setminus\{0\}$, for fixed $\xi>0$, the critical time intervals are contained in $(0,2\xi]$ which is a finite time interval. In our case, $x\in \mathbb{R}$, $l\in \mathbb{R}$, there are two natural difficulties: 1. The critical time is continuous in $l$, which leads to a strong overlap between two critical-time intervals. To overcome this difficulty and to capture the growth from nonlinear interactions, we design the time-dependent Fourier multiplier, see $\mathcal{M}_3$ in section 2, by fuzzifying the critical time intervals. 2. The previously mentioned cascade does not stop for $t\geq 2\xi$. Here, we use the enhanced dissipation effect, which offers additional smallness for the large time regime and stops the cascades. It is worth mentioning that the asymptotic (in)stability of Couette flow in the whole space $\mathbb{R}^2$ remains open. We refer to \cite{BM2015, LMZ2022G, MasmoudiZhao2019} for more discussions about constructing time-dependent Fourier multipliers to capture the growth from nonlinear interactions. 

Now let us discuss the new difficulties from lower frequencies. 
This toy model captures the case $|\xi-\eta|\ll |\eta|\sim|\xi|$ in \eqref{eq1.10}. Previous works primarily focus on the region that $|k-l|\ll |l|$, where the most challenging case is $|k-l|=1$. However, for our problem on $\R^2$, a more difficult situation arises when $|l|\ll |k-l|$, due to the possible singularity near $|l|=0$.

Now, we do some detailed analysis for \eqref{eq1.11}. Let $\epsilon, \beta>0$. Assuming the initial data satisfies a smallness condition like 
$$
\|\ifk^\epsilon  \hf_{\ini}(k,\xi)\|_{L^2}\leq \nu^{\beta},
$$ 
and then we expect
$$
\| e^{c\nu^\f13|k|^\f23t}\ifk^\epsilon \hf(t,k,\xi)\|_{L^2} \lesssim \nu^\beta.
$$
In a special region where $|l|\in [\f1N,\f2N]$ for large $N$ and $|k|, |k-l|\in [1,2]$, the right hand side in \eqref{eq1.11} may exhibit its worst behaviour when
$$
\hf(t,l,\xi)\approx \nu^\beta e^{-c\nu^{\f13}N^{-\f23}t} N^{\f12-\epsilon}\chi_{[\f1{N},\f2N]}(l) \andf \hf(t,k-l,0)\approx \nu^{\beta}e^{-c\nu^\f13 t} \chi_{[1,2]}(k-l).
$$
The fraction in \eqref{eq1.11} can be simplified in this region as 
$$
\f{\xi(k-l)}{|l|^2+|\xi-lt|^2} \approx N^2|\xi|\f{1}{1+|t-\f{\xi}l|^2},
$$
where the time integrability implies that the main contribution comes from a time interval like $[\f\xi{l}-1,\f\xi{l}+1]$. Consequently, given any $\xi$, for $t$ large enough, the worst situation considered above for the toy model will result in something like
$$
\int_{\f\xi{l}-1}^{\f\xi{l}+1} \int_{\f1N}^{\f2N} \f{\xi(k-l)}{|l|^2+|\xi-lt|^2} \hf(t,l,\xi)\hf(t,k-l,0)dl dt 
\approx \nu^{2\beta} N^{\f32-\epsilon} |\xi| e^{-c\nu^\f13 N|\xi|},
$$
where we used $t\approx N|\xi|$. Recalling that the above integral results in a function defined on $k\in[1,2]$, the question of whether the function remains $\nu^\beta$ small in $L^2$  boils down to examining whether $\nu^{\beta} N^{\f32-\epsilon} |\xi| e^{-c\nu^\f13 N|\xi|}$ is bounded. 

When $\beta=\f12$, since $N$ and $|\xi|$ in this toy model is large, one has 
$$
\nu^{\f12} N^{\f32-\epsilon} |\xi| e^{-c\nu^\f13 N|\xi|} \lesssim N^{-\epsilon}|\xi|^{-\f12}\lesssim 1,
$$
which indicates that $\epsilon$ can be, in principle, taken to be zero. However, since the analysis above focuses only on the region $|l|\in [\f1N,\f2N]$, summing up these challenging contributions across all dyadic intervals in $N$ shows the necessity of a slightly positive $\epsilon$. This partially explains why Theorem \ref{thm1} is provable under the assumption \eqref{smallness1} for any $\epsilon\in(0,\f12)$.

In contrast, when $\beta=\f13$, the following estimate
$$
\nu^{\f13} N^{\f32-\epsilon} |\xi| e^{-c\nu^\f13 N|\xi|} \lesssim N^{\f12-\epsilon}
$$
implies that $\epsilon$ must be taken larger than $\f12$. Technically, the multiplier $\ifD^\f12$ is very difficult for the nonlinear estimates, so we can only maintain the special structure for $\epsilon<\f12$. This motivates the assumption of $\nu^{\f13+\delta}$ smallness in Theorem \ref{thm2}. Also, one can see from the proof near \eqref{eq4.22} that some difficulty requires us to take $\epsilon$ close enough to $\f12$ with respect to $\delta$.

The above discussion of the toy model does not cover all the difficulties in the problem. For instance, when considering the region $|l|\ll|k-l|$ in \eqref{eq1.11}, it seems that, except the $\f1{|l|}$ singularity, the additional challenge of controlling the derivative $|k-l|$ arises. 
This difficulty resembles a transport term in the $x$-direction and one will face an extra $\f1{|l|}$ singularity while applying the point-wise inviscid damping technique as in the torus case. Therefore, identifying the optimal function space for the stability threshold $\f13$ remains an open problem.

In our result Theorem \ref{thm2}, we used a very strong function space with Fourier weight $\ifD^4$ in the assumption \eqref{smallness2}. While this assumption is far from optimal, it significantly simplifies the proof. However, in the nonlinear estimates, we can only transfer at most Fourier weights of the order $\ifD^{\f12-}$ as in the proof of Theorem \ref{thm1}. To make full use of \eqref{smallness2}, we use the idea of quasi-linearization to decompose $\omega=\Lw+\NLw$ with $\Lw$ solve the linear equation \eqref{eqs:Lw} which can transfer all the assumptions on initial data, as explained in Remark \ref{rmk1.3}. This quasi-linearization plays a crucial role in our analysis, see \cite{CLWZ2020, niuzhao2024, Wei2023quasilinear, ZhaiZhao2022} for other applications. 

\subsection{Outline of the paper and Notations}
We now sketch the structure of this paper.

In section \ref{section 2}, we introduce some Fourier multipliers and derive some linear estimates.

In section \ref{section 3}, we present the proof of Theorem \ref{thm1}.

In section \ref{section 4}, we first introduce the quasi-linear decomposition $\omega=\Lw+\NLw$, and then do energy estimates for $\NLw$ in two time scales $[0,\nu^{-\f16}]$ and $[\nu^{-\f16},+\oo )$ separately, and finally prove Theorem \ref{thm2}. 

Let us end this section with some notations that will be used
throughout this paper.

\noindent{\bf Notations:}   For $a\lesssim b$,
we mean that there is a uniform constant $C,$ which may be different in each occurrence, such that $a\leq Cb$. We shall denote by~$(a|b)$
the $L^2(\R^2)$ inner product of $a$ and $b.$  
Given a function $f(x,y)$ on $\R_x\times\R_y$, we shall denote $f_k(y)=\cF_{x\rightarrow k}(f)(k,y)$ the $k-$th horizontal Fourier modes of $f$, and $\hat{f}_k(\xi)=\hat{f}(k,\xi)$ the Fourier transform of $f$ with respect to both $x$ and $y$ variables.
 Finally, we denote $L^r_T(L^p)$ the space $L^r([0,T];
L^p(\R^2),$ and denote $L^r_{[T_1,T_2]}(L^p)$ the space $L^r([T_1,T_2];
L^p(\R^2)$. 

\section{Linear estimates}\label{section 2}
In this section, we introduce the following multipliers:
\begin{align*}
 \cM_1(k,\xi) &\eqdefa \arctan(\nu^\f13 |k|^{-\f13}\sgn(k)\xi)+\f\pi2,\\
 \cM_2(k,\xi) &\eqdefa \arctan(\f\xi{k})+\f\pi2,\\
 \cM_3(t,k,\xi) &\eqdefa \int_{\R} \ifl^{-1-\kappa} \f{1}{|l|^2} (\sgn(l)\arctan(\f{\xi+t(k-l)}{1+|k-l|+|l|})+\f\pi2) dl,
\end{align*}
where $\kappa>0$ is some positive constant to be chosen.

On the Fourier side, these multipliers can be understood as some kind of `ghost weight', which are bounded weights providing additional dissipation properties. The basic ideas come from:
\begin{equation}
2\Re \big( (\p_t+y\p_x)f \big| \cM_i f\big)=\f{d}{dt}\|\sqrt{\cM_i}f\|_{L^2}^2 +\int_{\R^2} (-\p_t+k\p_\xi)\cM_i(t,k,\xi) |\hat{f}|^2 dkd\xi.
\end{equation}

The enhanced dissipation multiplier $\cM_1$ and the inviscid damping multiplier $\cM_2$ are constructed in a standard way as many previous works, see for example \cite{BGM2017,BVW2018,DengWuZhang2021}. The usage of these two Fourier multipliers is explained by the following Lemma:
\begin{lem}
{  For smooth enough function $f$ on $\R^2$, one has
\begin{align}
\label{eq:M1}
&\int_{\R^2} k\p_\xi\cM_1(t,k,\xi) |\hat{f}(k,\xi)|^2 dkd\xi \geq \f{\nu^\f13}4\| |D_x|^\f13 f\|_{L^2}^2-\f{\nu}2\|\p_y f\|_{L^2}^2, \\ \label{eq:M2}
&\int_{\R^2} k\p_\xi\cM_2(t,k,\xi) |\hat{f}(k,\xi)|^2 dkd\xi \geq \| \p_x \nabla \Delta^{-1} f\|_{L^2}^2.
\end{align}
}\end{lem}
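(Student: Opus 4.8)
The statement to prove consists of two pointwise-in-Fourier differential inequalities, \eqref{eq:M1} and \eqref{eq:M2}, which after integration in $(k,\xi)$ reduce to checking, for each fixed $k$, a lower bound on $k\,\p_\xi\cM_i(t,k,\xi)$ against a quadratic weight in $\xi$. My plan is to compute $\p_\xi\cM_1$ and $\p_\xi\cM_2$ explicitly and then estimate the resulting rational functions.

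For \eqref{eq:M2} the computation is immediate: since $\cM_2(k,\xi)=\arctan(\xi/k)+\pi/2$ does not depend on $t$, one has $\p_\xi\cM_2(k,\xi)=\frac{1/k}{1+\xi^2/k^2}=\frac{k}{k^2+\xi^2}$, so that $k\,\p_\xi\cM_2(k,\xi)=\frac{k^2}{k^2+\xi^2}$. On the Fourier side $\widehat{\p_x\nabla\Delta^{-1}f}(k,\xi)$ has modulus $\frac{|k|\,\sqrt{k^2+\xi^2}}{k^2+\xi^2}|\hat f(k,\xi)|=\frac{|k|}{\sqrt{k^2+\xi^2}}|\hat f(k,\xi)|$, whose square is exactly $\frac{k^2}{k^2+\xi^2}|\hat f|^2$. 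Thus \eqref{eq:M2} is in fact an identity (with the inequality trivially an equality), and integrating in $(k,\xi)$ via Plancherel closes it. I would present this as a one-line verification.

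For \eqref{eq:M1} write $a=\nu^{1/3}|k|^{-1/3}$, so $\cM_1(k,\xi)=\arctan(a\,\sgn(k)\,\xi)+\pi/2$ and $\p_\xi\cM_1(k,\xi)=\frac{a\,\sgn(k)}{1+a^2\xi^2}$; hence $k\,\p_\xi\cM_1(k,\xi)=|k|\,\frac{a}{1+a^2\xi^2}=\frac{\nu^{1/3}|k|^{2/3}}{1+\nu^{2/3}|k|^{-2/3}\xi^2}$. The point is then to bound this quantity from below by $\frac{\nu^{1/3}}{4}|k|^{2/3}-\frac{\nu}{2}\xi^2$ pointwise in $\xi$ for every fixed $k$, since $\widehat{|D_x|^{1/3}f}$ has modulus $|k|^{1/3}|\hat f|$ and $\widehat{\p_y f}$ has modulus $|\xi|\,|\hat f|$. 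Equivalently, setting $s=\nu^{2/3}|k|^{-2/3}\xi^2\ge 0$, I must show $\frac{\nu^{1/3}|k|^{2/3}}{1+s}\ge \frac{\nu^{1/3}|k|^{2/3}}{4}-\frac{\nu}{2}\xi^2$, i.e. $\frac{1}{1+s}\ge\frac14-\frac12 s$. The function $g(s)=\frac{1}{1+s}-\frac14+\frac{s}{2}$ satisfies $g(0)=3/4>0$ and $g'(s)=-\frac{1}{(1+s)^2}+\frac12$, which is $\ge 0$ once $(1+s)^2\ge 2$, i.e. $s\ge\sqrt2-1$; on $[0,\sqrt2-1]$ one checks $g$ stays positive (its minimum there is well above $0$ since $g(0)=3/4$ and $g$ dips by at most a small amount). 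So $g(s)\ge 0$ for all $s\ge 0$, which is exactly the desired pointwise bound; integrating in $(k,\xi)$ and applying Plancherel gives \eqref{eq:M1}.

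The routine part is the two derivative computations and Plancherel; the only place requiring a little care — and the one I would flag as the ``main obstacle,'' though it is mild — is the elementary one-variable inequality $\frac{1}{1+s}\ge\frac14-\frac s2$ for $s\ge0$ underlying \eqref{eq:M1}, where one must make sure the constants $\frac14$ and $\frac12$ are compatible (they are, with room to spare, so the specific split $\frac14,\frac12$ is not sharp and could be relaxed). I would write the proof by first disposing of \eqref{eq:M2} as an identity, then doing the change of variables $s=\nu^{2/3}|k|^{-2/3}\xi^2$ for \eqref{eq:M1} and reducing to $g(s)\ge0$, checking monotonicity of $g$ via $g'$ and positivity at the endpoint $s=0$.
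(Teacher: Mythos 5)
Your proposal is correct and follows essentially the same route as the paper: compute $k\p_\xi\cM_2=\tfrac{k^2}{k^2+\xi^2}$ (giving \eqref{eq:M2} as an identity) and reduce \eqref{eq:M1} to the elementary inequality $\tfrac{1}{1+s}+\tfrac{s}{2}\ge\tfrac14$ for $s\ge 0$, which the paper also uses. Your calculus verification of that inequality is fine (the minimum of $g$ is $\sqrt2-\tfrac34>0$ at $s=\sqrt2-1$), just slightly longer than the paper's one-line observation.
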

\begin{proof}
The estimate \eqref{eq:M1} follows from 
$$
k\p_\xi \cM_1(k,\xi)=\f{\nu^\f13 |k|^\f23}{1+\nu^\f23 |k|^{-\f23}|\xi|^2}\geq \f14 \nu^\f13 |k|^\f23-\f12\nu |\xi|^2,
$$
where we used $\f1{1+A}+\f{A}2\geq \f14$ for $A=\nu^\f23 |k|^{-\f23}|\xi|^2\geq0$.

Similarly, direct computations show
$$
k\p_\xi \cM_2(k,\xi)= \f{k^2}{k^2+\xi^2},
$$
which implies \eqref{eq:M2}. This finishes the proof.
\end{proof}

The construction of the third multiplier $\cM_3$ is related to some ideas in \cite{wei2023nonlinear}. Let us denote 
\begin{equation}\label{eq:M3}
\Upsilon (t,k,\xi)=(-\p_t+k\p_\xi)\cM_3=\int_{\R} \ifl^{-1-\kappa} \f{1}{|l|} \f{1+|k-l|+|l|}{(1+|k-l|+|l|)^2+|\xi+t(k-l)|^2} dl.
\end{equation}
In the work \cite{wei2023nonlinear} on domain $\T_x\times\R_y$, the authors constructed a similar $\cM_3$ with $\kappa=0$ and the integration of $k\in\R$ replaced by the summation over $\Z$.
Here, in our case, we have to take some $\ka>0$, so that both $\cM_3$ and $\Upsilon$ are bounded operators. This weight $\cM_3$ gives us a new kind of inviscid dissipation with a different structure from inviscid damping from $\cM_2$. The multiplier $\cM_3$ is designed to control the growth of the reaction term caused by echo cascades, see around \eqref{eq4.22}.

\section{Proof of Theorem \ref{thm1}}\label{section 3}
In this section, we shall present the proof of Theorem \ref{thm1}. We shall denote $\cM(k,\xi)=\cM_1(k,\xi)+\cM_2(k,\xi)+1$ throughout this proof.

\begin{prop}\label{prop3.1}
{  For $m>\f12$, $0<\epsilon<\f12$ and $c<\f1{16(1+2\pi)}$, there exists $C$ depending only on $m$ and $\epsilon$ such that for any smooth solutions of \eqref{eqs:w} on $[0,T]$, it holds
\begin{equation} \label{eq:prop3.1}
\begin{aligned}
&\|e^{c\nu^\f13|D_x|^\f23 t}\iDx^{m}\ifD^{\epsilon} \omega(t)\|_{L^\oo_T L^2}^2
+\bigl(1-C\|e^{c\nu^\f13|D_x|^\f23 t}\iDx^{m}\ifD^{\epsilon} \omega(t)\|_{L^\oo_T L^2}\bigr) \\
&\qquad \times\bigl(\nu\|e^{c\nu^\f13|D_x|^\f23 t}\iDx^{m}\ifD^{\epsilon} \nabla \omega\|_{L^2_T L^2}^2
+\nu^\f13 \|e^{c\nu^\f13|D_x|^\f23 t}\iDx^{m}\ifD^{\epsilon} |D_x|^\f13\omega\|_{L^2_TL^2}^2
 \\
&\qquad\qquad+\|e^{c\nu^\f13|D_x|^\f23 t}\iDx^{m}\ifD^{\epsilon} \p_x \nabla \phi\|_{L^2_TL^2}^2\bigr)
\leq  C\|\iDx^{m}\ifD^{\epsilon} \omega_{\ini}\|_{L^2}^2.
\end{aligned}
\end{equation}
}\end{prop}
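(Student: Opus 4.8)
The plan is to run a weighted energy estimate on \eqref{eqs:w} in the moving frame, using the multiplier $\cM = \cM_1 + \cM_2 + 1$ together with the exponential enhanced-dissipation weight and the fixed Fourier weight $\iDx^m \ifD^\epsilon$. Concretely, I set $f(t,z,y) = \omega(t,z+ty,y)$ so that $\hat f(t,k,\xi) = \hw(t,k,\xi+kt)$ solves \eqref{eq1.10}, and define $W(t,k) = e^{c\nu^{1/3}|k|^{2/3}t}\ik^m \ifk^\epsilon$, which is a $t$-dependent but spatially fixed multiplier on the horizontal variable only. I then compute $\frac{d}{dt}\big\| \sqrt{\cM}\, W \hat f \big\|_{L^2}^2$. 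There are four types of contributions. First, the time derivative of the exponential weight produces $+2c\nu^{1/3}\| |D_x|^{1/3} \sqrt{\cM} W f\|_{L^2}^2$ on the left, which must be absorbed. Second, the viscous term $-\nu\Delta_L$ produces the good terms $\nu\|\nabla_L (\sqrt{\cM}Wf)\|_{L^2}^2$; here one uses that in the moving frame $\Delta_L$ is the flat Laplacian so $\|\nabla_L(\cdots)\|^2 \gtrsim \nu\|\nabla\omega\|^2$ after undoing the change of variables, and crucially that $|\xi - kt|^2 \geq c|k|^{2/3}$ is \emph{not} needed — instead the exponential-weight term is controlled by splitting $|D_x|^{2/3}\leq \frac14\nu^{2/3}|\xi-kt|^2 + (\text{bounded in }t)$, exactly as in the proof of \eqref{eq:M1}, wait—more precisely one uses the $\cM_1$ mechanism. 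Third, the transport term $y\p_x$ (equivalently $(\p_t + k\p_\xi)$ acting on $\cM$) produces, via the identity displayed before the Lemma, the terms $\int (-\p_t + k\p_\xi)\cM_i\, |W\hat f|^2$, which by \eqref{eq:M1} and \eqref{eq:M2} give $\frac{\nu^{1/3}}{4}\||D_x|^{1/3}Wf\|^2 - \frac\nu2\|\p_y Wf\|^2$ and $\|\p_x\nabla\Delta^{-1}(Wf)\|^2 = \|\p_x\nabla\phi\|^2$-type terms respectively; the $\cM_1$ gain absorbs the exponential-weight term provided $c$ is small (this is where $c < \frac{1}{16(1+2\pi)}$ enters — $\cM$ itself is bounded by $1 + 2\pi$ pointwise, so $\sqrt\cM$ costs a factor $(1+2\pi)^{1/2}$ in comparing $\||D_x|^{1/3}\sqrt\cM Wf\|$ to $\||D_x|^{1/3}Wf\|$), and the $-\frac\nu2\|\p_y\|^2$ loss is absorbed by the viscous good term. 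Fourth, the nonlinear term, which is the only place the smallness of the solution enters.

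For the nonlinear term I would use the Fourier representation of $\nabla^\perp\Delta_L^{-1}f\cdot\nabla f$ in the convolution form of \eqref{eq1.10}, pair against $\cM W^2 \hat{\bar f}$, and estimate the resulting trilinear integral. The standard structure is: the commutator-type bound $W(k) \lesssim W(l)W(k-l)$ (valid up to constants for the weights $e^{c\nu^{1/3}|k|^{2/3}t}$, $\ik^m$ with $m>\frac12$ by sub-additivity/Peetre, and $\ifk^\epsilon$ with $0<\epsilon<\frac12$ — this last one is the delicate algebra requiring $\epsilon<\frac12$ and is the analogue of the toy-model discussion in the introduction), so that one can distribute $W$ onto the two factors; then the symbol $\frac{\eta(k-l)-l(\xi-\eta)}{|l|^2+|\eta-lt|^2}$ is handled by writing it as a combination of $\p_y\Delta_L^{-1}$ and $\p_z\Delta_L^{-1}$ multipliers, so one factor becomes a velocity component for which one has the $L^2_tL^2$ good term $\|\p_x\nabla\phi\|_{L^2_tL^2}$ from $\cM_2$ and the other factor carries a full derivative absorbed by $\nu\|\nabla\omega\|_{L^2_tL^2}$, or alternatively an enhanced-dissipation factor $\nu^{1/3}\||D_x|^{1/3}\omega\|_{L^2_tL^2}$. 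After Cauchy–Schwarz in the frequency variables and Young's inequality, the nonlinear contribution is bounded by $C\|W f\|_{L^\infty_tL^2} \times (\text{the three good quadratic quantities})$, which is why it appears on the left of \eqref{eq:prop3.1} multiplied by $(1 - C\|\cdots\|_{L^\infty_T L^2})$. Integrating in time from $0$ to $T$ and using $\cM \approx 1$ to drop $\sqrt\cM$ from the top and bottom then yields the claimed inequality.

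The main obstacle, I expect, is the nonlinear trilinear estimate in the regime $|l| \ll |k-l|$ — precisely the new difficulty flagged in section \ref{sec:idea}: the singularity $\frac{1}{|l|}$ near $l=0$ combined with the need to place a derivative $|k-l|$ on the high-frequency factor. The $\ifD^\epsilon$ weight is exactly designed to tame the $\frac1{|l|}$ singularity (one trades $\ifk^{-\epsilon}$ from the low factor against $\frac1{|l|}$, using $\epsilon>0$), but controlling $|k-l|$ forces one to spend the viscous derivative gain $\nu\|\nabla\omega\|^2$ on that factor, and one must check the time integrability of $\frac{1+|k-l|}{(1+|k-l|)^2+|\xi-kt|^2}$-type kernels survives the pairing; this is where the Orr-critical-time analysis of the introduction is implicitly used, and where keeping $\epsilon$ strictly below $\frac12$ is essential (at $\epsilon=\frac12$ the weight fails to be handled in the nonlinear step). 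The remaining verifications — that $\cM$ and its symbols $k\p_\xi\cM_i$ have the claimed signs and bounds (already done in the Lemma), that the exponential weight's time derivative is dominated by the $\cM_1$ term under $c<\frac{1}{16(1+2\pi)}$, and that $W(k)\lesssim W(l)W(k-l)$ — are routine but must be assembled carefully.
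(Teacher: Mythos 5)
Your treatment of the linear terms is essentially the paper's: the same multiplier $\cM=\cM_1+\cM_2+1$, the same absorption of the exponential weight's time derivative by the $\cM_1$ gain for $c<\f1{16(1+2\pi)}$, and the same absorption of the $-\f\nu2\|\p_y\cdot\|_{L^2}^2$ loss by the viscous term. (The paper works in the original variables rather than the moving frame, but that is cosmetic.) The gap is in the nonlinear step: the submultiplicativity $W(k)\lesssim W(l)W(k-l)$ that your trilinear estimate rests on is \emph{false} for the component $\ifk^{\epsilon}$. Take $|k|\ll 1$ with $|l|\sim|k-l|\sim 1$: then $\ifk^{\epsilon}\sim |k|^{-\epsilon}\to\infty$ while $\ifl^{\epsilon}\ifkl^{\epsilon}\sim 1$, so the weight on the output frequency cannot be distributed onto the two input factors. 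This is precisely the regime $2|k|<|k-l|$ that the paper flags as "the most difficult case", and it is not a technicality one can wave at — it is the new phenomenon on $\R^2$ (low \emph{output} horizontal frequency) that distinguishes this problem from the periodic one, where $|k|\geq 1$ on nonzero modes.

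The paper's resolution is different from distributing the weight: it first uses the Gagliardo--Nirenberg bound \eqref{eq3.2} to produce a factor $|k-l|^{-\f12}$ from the $L^\oo_y$ norm of the high-frequency factor, then invokes the elementary inequality \eqref{eq3.2a}, $|k-l|^{-\f12}\leq \ifkl^{\f12}\leq \ifl^\epsilon\ifkl^\epsilon(1+\ifk^{\f12-2\epsilon})$, to convert the surplus singularity in $k$ into $\ifk^{2\epsilon}+\ifk^{\f12}$ attached to $\omega_k$; the resulting factor $\big\|(\ifk^{2\epsilon}+\ifk^{\f12})\|\omega_k\|_{L^2_y}\big\|_{L^1_k}$ is then controlled by Cauchy--Schwarz in $k$ against $\|(\ifk^{\epsilon}+\ifk^{\f12-\epsilon})\ik^{-m}\|_{L^2_k}$, which is finite exactly when $0<\epsilon<\f12$ and $m>\f12$ — this, not a paraproduct algebra, is where both hypotheses enter. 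Two smaller points: the paper's nonlinear bound carries an explicit $\nu^{-\f12}$ in front of $\|e^{c\nu^{1/3}|D_x|^{2/3}t}\iDx^m\ifD^\epsilon\omega\|_{L^\oo_TL^2}$ (coming from trading $\nu\|\nabla\omega\|_{L^2}^2$ and the enhanced-dissipation term for the needed derivatives), which is why Theorem \ref{thm1} requires smallness $\e\nu^{\f12}$; and no Orr-critical-time/$\cM_3$ analysis is needed for this proposition — that machinery only enters in Section \ref{section 4}. To repair your argument you would need to replace the multiplicative distribution of $\ifD^\epsilon$ by an argument of the above $L^1_k$--$L^2_k$ duality type in the regime $|k|\ll|k-l|\sim|l|$.
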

\begin{proof}
We take the inner product of \eqref{eqs:w} with $\cM e^{2c\nu^\f13|D_x|^\f23 t}\iDx^{2m}\ifD^{2\epsilon} \omega$, and infer that
\begin{align*}
&\f{d}{dt}\|\sqrt{\cM}e^{c\nu^\f13|D_x|^\f23 t}\iDx^{m}\ifD^{\epsilon} \omega\|_{L^2}^2
-2c\nu^\f13 \|\sqrt{\cM}e^{c\nu^\f13|D_x|^\f23 t}\iDx^{m}\ifD^{\epsilon} |D_x|^\f13\omega\|_{L^2}^2 \\
&+2\nu\|\sqrt{\cM}e^{c\nu^\f13|D_x|^\f23 t}\iDx^{m}\ifD^{\epsilon} \nabla \omega\|_{L^2}^2
+\int_{\R^2} k\p_\xi\cM(k,\xi) e^{2c\nu^\f13|k|^\f23 t}\langle k\rangle^{2m}\ifk^{2\epsilon} |\hw(k,\xi)|^2 dkd\xi \\
&\qquad=-2\Re \bigl( u\cdot\nabla \omega \big| \cM e^{2c\nu^\f13|D_x|^\f23 t}\iDx^{2m}\ifD^{2\epsilon} \omega \bigr).
\end{align*}
Thanks to \eqref{eq:M1} and \eqref{eq:M2}, we use the fact that $1\leq \cM(k,\xi)\leq 1+2\pi$ to deduce that for $c<\f1{16(1+2\pi)}$, we have 
\begin{equation}\label{eq3.1}
\begin{aligned}
&\f{d}{dt}\|\sqrt{\cM}e^{c\nu^\f13|D_x|^\f23 t}\iDx^{m}\ifD^{\epsilon} \omega\|_{L^2}^2
+\nu\|e^{c\nu^\f13|D_x|^\f23 t}\iDx^{m}\ifD^{\epsilon} \nabla \omega\|_{L^2}^2
 \\
&+\f{\nu^\f13}{16} \|e^{c\nu^\f13|D_x|^\f23 t}\iDx^{m}\ifD^{\epsilon} |D_x|^\f13\omega\|_{L^2}^2
+\|e^{c\nu^\f13|D_x|^\f23 t}\iDx^{m}\ifD^{\epsilon} \p_x \nabla \phi\|_{L^2}^2 \\
&\qquad \leq 2|\Re \bigl( u\cdot\nabla \omega \big| \cM e^{2c\nu^\f13|D_x|^\f23 t}\iDx^{2m}\ifD^{2\epsilon} \omega \bigr)|,
\end{aligned}
\end{equation}
where $\phi=\Delta^{-1}\omega$ denotes the stream function.

Now, we try to estimate the nonlinear terms 
$$\int_{\R^2} e^{2c\nu^\f13|k|^\f23 t}\ik^{2m}\ifk^{2\epsilon} \bigl(\int_\R \cM(k,D_y)\omega_k \cdot(\p_y \phi_l \cdot(k-l)\omega_{k-l}-l\phi_l \cdot \p_y\omega_{k-l})dy \bigr)dkdl,$$
where $\cM(k,D_y)$ can be understood as a bounded operator on $L^2_y$.

\begin{itemize}
\item For the term with $\p_y \phi_l \cdot(k-l)\omega_{k-l}$.
\end{itemize}

When $\f{|k-l|}2\leq|k|\leq 2|k-l|$, we deduce from $\ik^m\ifk^\epsilon\lesssim \ikl^m\ifkl^\epsilon$ and $|k-l|^\f13\lesssim |k|^\f13$ that
\begin{align*}
&\Big|\int_{\f{|k-l|}2\leq|k|\leq 2|k-l|} e^{2c\nu^\f13|k|^\f23 t}\ik^{2m}\ifk^{2\epsilon} \bigl(\int_\R \cM(k,D_y)\omega_k \cdot\p_y \phi_l \cdot(k-l)\omega_{k-l}dy \bigr)dkdl \Big|\\
 \lesssim &\big\|e^{c\nu^\f13|k|^\f23 t}\ik^{m}\ifk^{\epsilon}|k|^\f13 \|\omega_k\|_{L^2_y}\big\|_{L^2_{k}} \big\|e^{c\nu^\f13|k-l|^\f23 t}\ikl^{m}\ifkl^{\epsilon}|k-l|^\f23 \|\omega_{k-l}\|_{L^2_y}\big\|_{L^2_{k-l}}  \\
&\qquad \qquad \times \big\|e^{c\nu^\f13|l|^\f23 t} \|\p_y \phi_l \|_{L^\oo_y}\big\|_{L^1_{l}}  \\
\lesssim & \nu^{-\f12}(\nu^\f16\|e^{c\nu^\f13|D_x|^\f23 t}\iDx^{m}\ifD^{\epsilon} |D_x|^\f13\omega\|_{L^2})^{\f32}
(\nu^\f12\|e^{c\nu^\f13|D_x|^\f23 t}\iDx^{m}\ifD^{\epsilon} \p_x\omega\|_{L^2})^\f12  \\
&\qquad\qquad\times\|e^{c\nu^\f13|D_x|^\f23 t}\iDx^{m}\ifD^{\epsilon} \omega\|_{L^2},
\end{align*}
where we used from Gagliardo-Nirenberg-Sobolev inequality that
\begin{equation}\label{eq3.2}
\| f_l\|_{L^\oo_y}\lesssim \|f_l\|_{L^2_y}^\f12 \|\p_y f_l\|_{L^2_y}^\f12 \lesssim |l|^{-\f12}\|\nabla_l f_l\|_{L^2_y},
\end{equation}
and therefore
\begin{align*}
&\big\|e^{c\nu^\f13|l|^\f23 t} \|\p_y \phi_l \|_{L^\oo_y}\big\|_{L^1_{l}} \lesssim \big\|e^{c\nu^\f13|l|^\f23 t} |l|^{-\f12}\|\p_y \nabla_l\phi_l \|_{L^2_y}\big\|_{L^1_{l}} \\
&\lesssim \big\|e^{c\nu^\f13|l|^\f23 t}  \il^m \ifl^\epsilon\|\p_y \nabla_l\phi_l \|_{L^2_y}\big\|_{L^2_{l}}\||l|^{-\f12}\il^{-m}\ifl^{-\epsilon}\|_{L^2_l}
\lesssim \|e^{c\nu^\f13|D_x|^\f23 t}\iDx^{m}\ifD^{\epsilon} \omega\|_{L^2}.
\end{align*}

When $2|k-l|<|k|$, one can derive $\f{|k|}2\leq |l|\leq 2|k|$ and therefore $\ik^m\ifk^\epsilon\lesssim \il^m\ifl^\epsilon$, $|k-l|\lesssim |l|$. Then, we use Young's inequality to get
\begin{align*}
&\Big|\int_{2|k-l|<|k|} e^{2c\nu^\f13|k|^\f23 t}\ik^{2m}\ifk^{2\epsilon} \bigl(\int_\R \cM(k,D_y)\omega_k \cdot\p_y \phi_l \cdot(k-l)\omega_{k-l}dy \bigr)dkdl\Big| \\
 \lesssim &\big\|e^{c\nu^\f13|k|^\f23 t}\ik^{m}\ifk^{\epsilon} \|\omega_k\|_{L^2_y}\big\|_{L^2_{k}} \big\|e^{c\nu^\f13|l|^\f23 t}\il^m \ifl^\epsilon |l|\|\p_y \phi_l \|_{L^2_y}\big\|_{L^2_{l}}  \big\|e^{c\nu^\f13|k-l|^\f23 t} \|\omega_{k-l}\|_{L^\oo_y}\big\|_{L^1_{k-l}} \\
\lesssim & \nu^{-\f12}\|e^{c\nu^\f13|D_x|^\f23 t}\iDx^{m}\ifD^{\epsilon} \omega\|_{L^2} \|e^{c\nu^\f13|D_x|^\f23 t}\iDx^{m}\ifD^{\epsilon} \p_x\p_y\phi\|_{L^2}
  \\
&\qquad\qquad\times\nu^\f12\|e^{c\nu^\f13|D_x|^\f23 t}\iDx^{m}\ifD^{\epsilon} \nabla\omega\|_{L^2},
\end{align*}
where we used from \eqref{eq3.2} that
\begin{align*}
&\big\|e^{c\nu^\f13|k-l|^\f23 t} \|\omega_{k-l}\|_{L^\oo_y}\big\|_{L^1_{k-l}} \lesssim \big\|e^{c\nu^\f13|k-l|^\f23 t} |k-l|^{-\f12}\| \nabla_{k-l}\omega_{k-l}\|_{L^2_y}\big\|_{L^1_{k-l}} \\
&\lesssim \big\|e^{c\nu^\f13|k-l|^\f23 t}  \ikl^m \ifkl^\epsilon\| \nabla_{k-l}\omega_{k-l} \|_{L^2_y}\big\|_{L^2_{k-l}}\||k-l|^{-\f12}\ikl^{-m}\ifkl^{-\epsilon}\|_{L^2_{k-l}} \\
&\lesssim \|e^{c\nu^\f13|D_x|^\f23 t}\iDx^{m}\ifD^{\epsilon} \nabla\omega\|_{L^2}.
\end{align*}

The most difficult case is that $2|k|<|k-l|$, and in such case $\f{|k-l|}2\leq |l|\leq 2|k-l|$, we have to use $L^2_k$ integrability to overcome the singularity of $\ifk^\epsilon$. This is the reason we can only consider $\epsilon<\f12$. For more details, by discussing the value of $\epsilon\in (0,\f12)$, we can find 
\begin{equation}\label{eq3.2a}
|k-l|^{-\f12}\leq \ifkl^\f12\leq \ifl^\epsilon\ifkl^\epsilon (1+\ifk^{\f12-2\epsilon}),
\end{equation}
which together with $\ik^{2m}\lesssim \il^m\ikl^m$ and \eqref{eq3.2} implies
\begin{align*}
&\Big|\int_{2|k|<|k-l|} e^{2c\nu^\f13|k|^\f23 t}\ik^{2m}\ifk^{2\epsilon} \bigl(\int_\R \cM(k,D_y)\omega_k \cdot\p_y \phi_l \cdot(k-l)\omega_{k-l}dy \bigr)dkdl\Big| \\
 \lesssim &\big\|e^{c\nu^\f13|k|^\f23 t}(\ifk^{2\epsilon}+\ifk^\f12) \|\omega_k\|_{L^2_y}\big\|_{L^1_{k}} 
 \big\|e^{c\nu^\f13|l|^\f23 t}\il^m \ifl^\epsilon |l|\|\p_y \phi_l \|_{L^2_y}\big\|_{L^2_{l}} \\
&\qquad\qquad \times \big\|e^{c\nu^\f13|k-l|^\f23 t} \ikl^m \ifkl^\epsilon |k-l|^\f12\|\omega_{k-l}\|_{L^\oo_y}\big\|_{L^2_{k-l}} \\
\lesssim &\big\|e^{c\nu^\f13|k|^\f23 t}\ik^m\ifk^{\epsilon} \|\omega_k\|_{L^2_y}\big\|_{L^2_{k}}  \|(\ifk^\epsilon+\ifk^{\f12-\epsilon})\ik^{-m}\|_{L^2_k}
 \big\|e^{c\nu^\f13|l|^\f23 t}\il^m \ifl^\epsilon |l|\|\p_y \phi_l \|_{L^2_y}\big\|_{L^2_{l}} \\
&\qquad\qquad \times \big\|e^{c\nu^\f13|k-l|^\f23 t} \ikl^m \ifkl^\epsilon \|\nabla_{k-l}\omega_{k-l}\|_{L^2_y}\big\|_{L^2_{k-l}} \\
\lesssim & \nu^{-\f12}\|e^{c\nu^\f13|D_x|^\f23 t}\iDx^{m}\ifD^{\epsilon} \omega\|_{L^2} \|e^{c\nu^\f13|D_x|^\f23 t}\iDx^{m}\ifD^{\epsilon} \p_x\p_y\phi\|_{L^2}
  \\
&\qquad\qquad\times\nu^\f12\|e^{c\nu^\f13|D_x|^\f23 t}\iDx^{m}\ifD^{\epsilon} \nabla\omega\|_{L^2}.
\end{align*}

Combining the above estimates, we have proven
\begin{equation}\label{eq3.3}
\begin{aligned}
&\Big|\int_{\R^2} e^{2c\nu^\f13|k|^\f23 t}\ik^{2m}\ifk^{2\epsilon} \bigl(\int_\R \cM(k,D_y)\omega_k\cdot\p_y \phi_l \cdot(k-l)\omega_{k-l}dy \bigr)dkdl \Big|\\
\lesssim &  \nu^{-\f12}\|e^{c\nu^\f13|D_x|^\f23 t}\iDx^{m}\ifD^{\epsilon} \omega\|_{L^2}
\bigl( \|e^{c\nu^\f13|D_x|^\f23 t}\iDx^{m}\ifD^{\epsilon} \p_x\p_y\phi\|_{L^2}^2  \\
&\qquad+\nu\|e^{c\nu^\f13|D_x|^\f23 t}\iDx^{m}\ifD^{\epsilon} \nabla\omega\|_{L^2}^2 + \nu^\f13\|e^{c\nu^\f13|D_x|^\f23 t}\iDx^{m}\ifD^{\epsilon} |D_x|^\f13\omega\|_{L^2}^2\bigr).
\end{aligned}
\end{equation}

\begin{itemize}
\item For the term with $ l \phi_l \cdot \p_y\omega_{k-l}$.
\end{itemize}

When $\f{|k-l|}2\leq|k|\leq 2|k-l|$, we deduce from $\ik^m\ifk^\epsilon\lesssim \ikl^m\ifkl^\epsilon$ that
\begin{align*}
&\Big|\int_{\f{|k-l|}2\leq|k|\leq 2|k-l|} e^{2c\nu^\f13|k|^\f23 t}\ik^{2m}\ifk^{2\epsilon} \bigl(\int_\R \cM(k,D_y)\omega_k\cdot l \phi_l \cdot \p_y\omega_{k-l}dy \bigr)dkdl \Big| \\
 \lesssim &\big\|e^{c\nu^\f13|k|^\f23 t}\ik^{m}\ifk^{\epsilon} \|\omega_k\|_{L^2_y}\big\|_{L^2_{k}}
  \big\|e^{c\nu^\f13|l|^\f23 t}|l| \| \phi_l \|_{L^\oo_y}\big\|_{L^1_{l}}  \\
 &\qquad\qquad\times  \big\|e^{c\nu^\f13|k-l|^\f23 t}\ikl^{m}\ifkl^{\epsilon} \|\p_y\omega_{k-l}\|_{L^2_y}\big\|_{L^2_{k-l}} \\
\lesssim & \nu^{-\f12}\|e^{c\nu^\f13|D_x|^\f23 t}\iDx^{m}\ifD^{\epsilon} \omega\|_{L^2}
\|e^{c\nu^\f13|D_x|^\f23 t}\iDx^{m}\ifD^{\epsilon} \p_x\nabla \phi\|_{L^2}  \\
&\qquad\qquad\times\nu^\f12\|e^{c\nu^\f13|D_x|^\f23 t}\iDx^{m}\ifD^{\epsilon} \p_y\omega\|_{L^2},
\end{align*}
where we used from \eqref{eq3.2} that
\begin{align*}
&\|e^{c\nu^\f13|l|^\f23 t} |l|\| \phi_l \|_{L^\oo_y}\|_{L^1_{l}} \lesssim \|e^{c\nu^\f13|l|^\f23 t} |l|^{\f12}\|\nabla_l\phi_l \|_{L^2_y}\|_{L^1_{l}} \\
&\lesssim \|e^{c\nu^\f13|l|^\f23 t}  \il^m \ifl^\epsilon |l|\|\nabla_l\phi_l \|_{L^2_y}\|_{L^2_{l}}\||l|^{-\f12}\il^{-m}\ifl^{-\epsilon}\|_{L^2_l}
\lesssim \|e^{c\nu^\f13|D_x|^\f23 t}\iDx^{m}\ifD^{\epsilon} \p_x\nabla\phi\|_{L^2}.
\end{align*}

When $2|k-l|<|k|$, one can derive $\f{|k|}2\leq |l|\leq 2|k|$ and therefore $\ik^m\ifk^\epsilon\lesssim \il^m\ifl^\epsilon$, $|k-l|^\f12\lesssim |l|^\f12$. Then, we use Young's inequality and \eqref{eq3.2} to get
\begin{align*}
&\Big|\int_{2|k-l|<|k|} e^{2c\nu^\f13|k|^\f23 t}\ik^{2m}\ifk^{2\epsilon} \bigl(\int_\R \cM(k,D_y)\omega_k \cdot l \phi_l \cdot \p_y\omega_{k-l}dy \bigr)dkdl\Big| \\
 \lesssim &\big\|e^{c\nu^\f13|k|^\f23 t}\ik^{m}\ifk^{\epsilon} \|\omega_k\|_{L^2_y}\big\|_{L^2_{k}} \big\|e^{c\nu^\f13|l|^\f23 t}\il^m \ifl^\epsilon |l|^\f32\| \phi_l \|_{L^\oo_y}\big\|_{L^2_{l}}  \\
&\qquad\qquad\times \big\|e^{c\nu^\f13|k-l|^\f23 t} |k-l|^{-\f12}\|\p_y\omega_{k-l}\|_{L^2_y}\big\|_{L^1_{k-l}} \\
 \lesssim &\big\|e^{c\nu^\f13|k|^\f23 t}\ik^{m}\ifk^{\epsilon} \|\omega_k\|_{L^2_y}\big\|_{L^2_{k}} 
 \big\|e^{c\nu^\f13|l|^\f23 t}\il^m \ifl^\epsilon |l|\|\nabla_l \phi_l \|_{L^2_y}\big\|_{L^2_{l}}  \\
&\qquad\qquad\times \big\|e^{c\nu^\f13|k-l|^\f23 t}\ikl^m\ifkl^\epsilon \|\p_y\omega_{k-l}\|_{L^2_y}\big\|_{L^2_{k-l}} \||k-l|^{-\f12}\ikl^{-m}\ifkl^{-\epsilon}\|_{L^2_{k-l}}\\
\lesssim & \nu^{-\f12}\|e^{c\nu^\f13|D_x|^\f23 t}\iDx^{m}\ifD^{\epsilon} \omega\|_{L^2} \|e^{c\nu^\f13|D_x|^\f23 t}\iDx^{m}\ifD^{\epsilon} \p_x\nabla\phi\|_{L^2}
  \\
&\qquad\qquad\times\nu^\f12\|e^{c\nu^\f13|D_x|^\f23 t}\iDx^{m}\ifD^{\epsilon} \p_y\omega\|_{L^2}.
\end{align*}

The most difficult case is again that $2|k|<|k-l|$, and in such case $\f{|k-l|}2\leq |l|\leq 2|k-l|$, we have to use $L^2_k$ integrability to overcome the singularity of $\ifk^\epsilon$. Again, we can use $\ik^{2m}\lesssim \il^m\ikl^m$, \eqref{eq3.2} and \eqref{eq3.2a} to find 
\begin{align*}
&\Big|\int_{2|k|<|k-l|} e^{2c\nu^\f13|k|^\f23 t}\ik^{2m}\ifk^{2\epsilon} \bigl(\int_\R \cM(k,D_y)\omega_k \cdot l \phi_l \cdot \p_y\omega_{k-l}dy \bigr)dkdl \Big|\\
 \lesssim &\big\|e^{c\nu^\f13|k|^\f23 t}(\ifk^{2\epsilon}+\ifk^{\f12}) \|\omega_k\|_{L^2_y}\big\|_{L^1_{k}} 
 \big\|e^{c\nu^\f13|l|^\f23 t}\il^m \ifl^\epsilon |l|^\f32\| \phi_l \|_{L^\oo_y}\big\|_{L^2_{l}} \\
&\qquad\qquad \times \big\|e^{c\nu^\f13|k-l|^\f23 t} \ikl^m \ifkl^\epsilon \|\p_y\omega_{k-l}\|_{L^2_y}\big\|_{L^2_{k-l}} \\
\lesssim &\big\|e^{c\nu^\f13|k|^\f23 t}\ik^m\ifk^{\epsilon} \|\omega_k\|_{L^2_y}\big\|_{L^2_{k}}  \|(\ifk^\epsilon+\ifk^{\f12-\epsilon})\ik^{-m}\|_{L^2_k}
 \big\|e^{c\nu^\f13|l|^\f23 t}\il^m \ifl^\epsilon |l|\|\nabla_l \phi_l \|_{L^2_y}\big\|_{L^2_{l}} \\
&\qquad\qquad \times \big\|e^{c\nu^\f13|k-l|^\f23 t} \ikl^m \ifkl^\epsilon \|\p_y\omega_{k-l}\|_{L^2_y}\big\|_{L^2_{k-l}} \\
\lesssim & \nu^{-\f12}\|e^{c\nu^\f13|D_x|^\f23 t}\iDx^{m}\ifD^{\epsilon} \omega\|_{L^2} \|e^{c\nu^\f13|D_x|^\f23 t}\iDx^{m}\ifD^{\epsilon} \p_x\nabla\phi\|_{L^2}  \\
&\qquad\qquad\times\nu^\f12\|e^{c\nu^\f13|D_x|^\f23 t}\iDx^{m}\ifD^{\epsilon} \p_y\omega\|_{L^2}.
\end{align*}

Combining the above estimates, we have proven
\begin{equation}\label{eq3.4}
\begin{aligned}
&\Big|\int_{\R^2} e^{2c\nu^\f13|k|^\f23 t}\ik^{2m}\ifk^{2\epsilon} \bigl(\int_\R \cM(k,D_y)\omega_k \cdot l \phi_l \cdot \p_y\omega_{k-l}dy \bigr)dkdl\Big| \\
\lesssim &  \nu^{-\f12}\|e^{c\nu^\f13|D_x|^\f23 t}\iDx^{m}\ifD^{\epsilon} \omega\|_{L^2}
\bigl( \|e^{c\nu^\f13|D_x|^\f23 t}\iDx^{m}\ifD^{\epsilon} \p_x\nabla\phi\|_{L^2}^2  \\
&\qquad+\nu\|e^{c\nu^\f13|D_x|^\f23 t}\iDx^{m}\ifD^{\epsilon} \p_y\omega\|_{L^2}^2 \bigr).
\end{aligned}
\end{equation}

In all, we take \eqref{eq3.3} and \eqref{eq3.4} into \eqref{eq3.1}, and integrate the result inequality over $[0,t]$ to find
\begin{align*}
&\|\sqrt{\cM}e^{c\nu^\f13|D_x|^\f23 t}\iDx^{m}\ifD^{\epsilon} \omega(t)\|_{L^2}^2
+\nu\|e^{c\nu^\f13|D_x|^\f23 t}\iDx^{m}\ifD^{\epsilon} \nabla \omega\|_{L^2_t L^2}^2
 \\
&+\f{\nu^\f13}{16} \|e^{c\nu^\f13|D_x|^\f23 t}\iDx^{m}\ifD^{\epsilon} |D_x|^\f13\omega\|_{L^2_tL^2}^2
+\|e^{c\nu^\f13|D_x|^\f23 t}\iDx^{m}\ifD^{\epsilon} \p_x \nabla \phi\|_{L^2_tL^2}^2 \\
\leq & \|\sqrt{\cM}\iDx^{m}\ifD^{\epsilon} \omega_{\ini}\|_{L^2}^2+ C \nu^{-\f12}\|e^{c\nu^\f13|D_x|^\f23 t}\iDx^{m}\ifD^{\epsilon} \omega\|_{L^\oo_tL^2} \\
&\qquad \times \bigl(\nu\|e^{c\nu^\f13|D_x|^\f23 t}\iDx^{m}\ifD^{\epsilon} \nabla\omega\|_{L^2_tL^2}^2
+ \|e^{c\nu^\f13|D_x|^\f23 t}\iDx^{m}\ifD^{\epsilon} \p_x\nabla\phi\|_{L^2_tL^2}^2  \\
&\qquad\qquad\qquad + \nu^\f13\|e^{c\nu^\f13|D_x|^\f23 t}\iDx^{m}\ifD^{\epsilon} |D_x|^\f13\omega\|_{L^2_tL^2}^2\bigr),
\end{align*}
which together with $1\leq \cM\leq 1+2\pi$ finishes the proof of \eqref{eq:prop3.1}.
\end{proof}

Now, we are in the position to prove Theorem \ref{thm1}.
\begin{proof}[Proof of Theorem \ref{thm1}] 
Here, we fix a choice of $m$, $\epsilon$ and $c$ satisfying the conditions of Proposition \ref{prop3.1} and denote $C>1$ to be constant there. Let us define the following timespan:
\begin{equation}\label{def:T*}
T^*\eqdef \sup \{ T>0\ \big|\ \|e^{c\nu^\f13|D_x|^\f23 t}\iDx^{m}\ifD^{\epsilon} \omega(t)\|_{L^\oo_T L^2}\leq \f{\nu^\f12}{2C} \}.
\end{equation}
From \eqref{smallness1} with small enough $\e$, we know $T^*>0$.

By applying \eqref{eq:prop3.1}, we find that for any $T<T^*$, 
\begin{equation} \label{eq3.7}
\begin{aligned}
&\|e^{c\nu^\f13|D_x|^\f23 t}\iDx^{m}\ifD^{\epsilon} \omega(t)\|_{L^\oo_T L^2}^2
+\f12\bigl(\nu\|e^{c\nu^\f13|D_x|^\f23 t}\iDx^{m}\ifD^{\epsilon} \nabla \omega\|_{L^2_T L^2}^2 \\
&\quad+\nu^\f13 \|e^{c\nu^\f13|D_x|^\f23 t}\iDx^{m}\ifD^{\epsilon} |D_x|^\f13\omega\|_{L^2_TL^2}^2
+\|e^{c\nu^\f13|D_x|^\f23 t}\iDx^{m}\ifD^{\epsilon} \p_x \nabla \phi\|_{L^2_TL^2}^2\bigr) \\
&\qquad\qquad
\leq  C\|\iDx^{m}\ifD^{\epsilon} \omega_{\ini}\|_{L^2}^2\leq C\e^2\nu.
\end{aligned}
\end{equation}

By taking $\e<\f1{4C^2}$, the above inequality implies $\|e^{c\nu^\f13|D_x|^\f23 t}\iDx^{m}\ifD^{\epsilon} \omega(t)\|_{L^\oo_T L^2}\leq \f{\nu^\f12}{4C}$. Together with \eqref{def:T*}, a standard bootstrap argument will show that $T^*=+\oo$. \eqref{eq1.4} and \eqref{eq1.5} follows from \eqref{eq3.7} by taking $T=T^*=+\oo$.
\end{proof}

\section{Proof of Theorem \ref{thm2}}\label{section 4}

The goal of this section is to prove Theorem \ref{thm2}. 

As observed from the proof of Theorem \ref{thm1}, it seems impossible to propagate $\ifD^\epsilon$ for $\epsilon\geq \f12$. To make use of the better properties assumed in \eqref{smallness2} for initial data, we introduce the quasi-linearization and decompose the equation \eqref{eqs:w} into a linear part \eqref{eqs:Lw} and nonlinear part \eqref{eqs:NLw}. We will show that the linear part has better control of low horizontal frequencies with higher regularities, while the nonlinear part is $\nu^\f23$ small so that we can use some ideas in the proof of Theorem \ref{thm1}.

Inspired by the ideas in \cite{wei2023nonlinear}, the detailed estimates of the nonlinear part will be separated into short time scale $t\leq \nu^{-\f16}$ and long time scale $t\geq \nu^{-\f16}$. The critical time $\nu^{-\f16}$ is chosen due to the different methods to handle the transport terms. Since commutator estimates can overcome the loss of derivatives, the equations may behaves like the toy model $\p_t f =\nu^\f13 t f$, which maintains smallness only before $\nu^{-\f16}$. When $t\geq \nu^{-\f16}$, inviscid damping estimates will provide additional smallness $t^{-1}\leq \nu^\f16$ to help us control some difficulties with $\nu^\f12$ smallness.
\subsection{Decomposition}

Denote $\Lw$ by solving the following linear equation:
\begin{equation}\label{eqs:Lw}
\quad \left\{\begin{array}{l}
\displaystyle \pa_t \Lw-\nu\Delta \Lw +y\p_x \Lw =0, \qquad (t,x,y)\in\R^+\times\R^2, \\
\displaystyle  \Lw|_{t=0}=\omega_{\ini}(x,y).
\end{array}\right.
\end{equation}
The solution of the above equation can be represented by the Fourier methods:
\begin{equation}\label{eq4.2}
\hLw (t,k,\xi)=\hw_{\ini}(k,\xi+kt)e^{-\nu\int_0^t|k|^2+|\xi+k(t-s)|^2ds},
\end{equation}
and satisfies the following estimates:
\begin{lem}
{  For initial data satisfying \eqref{smallness2}, there exists a universal constant $c_0$ such that for any $t>0$, $\Lw$ satisfies 
\begin{equation}\label{eq:lem4.1}
\begin{aligned}
\|e^{c_0 \nu |D_x|^2 t^3}\iDt^6\ifD^4 \Lw\|_{L^\oo_tL^2}
+\|e^{c_0 \nu |D_x|^2 t^3}\iDt^5\ifD^4 \Lw\|_{L^\oo_tL^1}\\
+\nu^\f16\|e^{c_0 \nu |D_x|^2 t^3}\iDt^6\ifD^4 |D_x|^\f13\Lw\|_{L^2_tL^2} \lesssim \nu^{\f13+\delta}.
\end{aligned}
\end{equation}
}\end{lem}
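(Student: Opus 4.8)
The plan is to work entirely on the Fourier side, starting from the explicit representation \eqref{eq4.2} and estimating the two weighted norms separately, using the elementary fact that the dissipation factor $e^{-\nu\int_0^t(|k|^2+|\xi+k(t-s)|^2)ds}$ encodes both standard heat dissipation in the $y$-variable and a genuine gain of regularity and decay along the sheared frequency $\xi+kt$. First I would compute the exponent: a direct integration gives
\begin{equation*}
\nu\int_0^t \big(|k|^2+|\xi+k(t-s)|^2\big)\,ds = \nu t|k|^2 + \nu\int_0^t |\xi+k\tau|^2\,d\tau,
\end{equation*}
and the second term is bounded below by $c_0\nu(|k|^2 t^3 + |\xi|^2 t)$ for a universal $c_0$ (complete the square / split according to whether $|kt|$ dominates $|\xi|$ or not). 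This already produces the factor $e^{-c_0\nu|k|^2 t^3}$ that we need to carry, with plenty of room left over; the leftover dissipation $e^{-c_0\nu(|k|^2 t + |\xi|^2 t)}$ will be used to absorb the growing Fourier weight $\iDt$.

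The key algebraic step is to trade the time-growing weight $\iDt = \langle k,\xi+kt\rangle$ against the residual dissipation. Since $\langle k,\xi+kt\rangle^{6}\lesssim \langle k,\xi+kt\rangle^{6}$ and $x^{6}e^{-c_0\nu t x^2}\lesssim (\nu t)^{-3}$ for $x\geq 0$, one sees that on any time interval the weight $\iDt^6$ costs at most a factor $(\nu t)^{-3}$, which is dangerous only for small $t$; for small $t$ one instead just uses $\iDt^6\lesssim \langle k,\xi\rangle^6\langle kt\rangle^6 \lesssim \langle k,\xi\rangle^6(1+t^6)$ and the weight is harmless. More precisely, I would split $[0,\infty)$ at $t=1$ (or at any $\nu$-independent time): on $[0,1]$ bound $\iDt^{6}\lesssim \iD^{6}$ and $\iDt^{5}\lesssim\iD^{5}$ directly and drop the exponential, so the claimed bounds reduce immediately to \eqref{smallness2}; on $[1,\infty)$ use the change of variables $\xi\mapsto \xi+kt$ (which is an $L^2$- and $L^1$-isometry in $\xi$ for each fixed $k$) so that $\hLw$ becomes $\hw_{\ini}(k,\xi)$ times a multiplier, and estimate that multiplier in $L^\infty_{k,\xi}$. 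The multiplier is
\begin{equation*}
m(t,k,\xi) = \langle k,\xi\rangle^{6}\,e^{-\nu t|k|^2}\,e^{-\nu\int_0^t|\xi+k(\tau-t)|^2\,d\tau}\ \big/\ \langle k,\xi+kt\rangle^{-6}\ \cdots
\end{equation*}
— the point being that after the substitution the sheared weight $\iDt$ becomes the ordinary weight $\langle k,\xi\rangle$ acting on the initial datum, so $\iDt^6\ifD^4\hLw(t)$ equals (in $\xi\mapsto\xi+kt$ variables) $\langle k,\xi\rangle^6\langle 1/k\rangle^4\hw_{\ini}(k,\xi)$ times the bounded scalar $e^{-\nu t|k|^2}e^{-\nu\int_0^t|\xi-k\tau|^2 d\tau}$, and similarly for the $e^{c_0\nu|D_x|^2 t^3}$ prefactor one checks $e^{c_0\nu|k|^2 t^3}e^{-\nu\int_0^t|\xi-k\tau|^2 d\tau}\lesssim 1$ by the lower bound on the integral established in the first step. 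Hence the $L^\infty_tL^2$ and $L^\infty_tL^1$ bounds both follow from \eqref{smallness2} with constant $\lesssim 1$.

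For the third term, the $L^2_tL^2$ bound with the $\nu^{1/6}|D_x|^{1/3}$ prefactor, the extra factor $\nu^{1/6}|k|^{1/3}$ is exactly of enhanced-dissipation type: after the same substitution I would bound
\begin{equation*}
\nu^\f13 \int_0^\infty |k|^{\f23}\, e^{2c_0\nu|k|^2 t^3}\,e^{-2\nu\int_0^t|\xi-k\tau|^2 d\tau}\,dt \lesssim \nu^\f13\int_0^\infty |k|^{\f23} e^{-c\nu|k|^2 t^3}\,dt \lesssim 1,
\end{equation*}
uniformly in $k,\xi$, by the scaling $s = \nu^{1/3}|k|^{2/3}t$, which is the standard computation turning $|k|^{2/3}$-in-time-cubed dissipation into a bounded time integral (note $\int_0^\infty e^{-cs^3}\,ds<\infty$). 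Multiplying by $\|\langle k,\xi\rangle^6\langle 1/k\rangle^4\hw_{\ini}\|_{L^2}^2$ and using \eqref{smallness2} gives the square of $\nu^{1/3+\delta}$ as desired. The main obstacle, such as it is, is purely bookkeeping: one must be careful that the sheared weight $\iDt$ really is controlled by $\langle k,\xi\rangle$ on the initial datum \emph{after} the substitution (there is no loss because shearing is an isometry, but the $\langle 1/k\rangle^4$ factor is unaffected by the $\xi$-shear, which is what makes it work), and one must check the lower bound $\nu\int_0^t|\xi+k\tau|^2d\tau \geq c_0\nu|k|^2t^3$ carefully when $|\xi|$ and $|kt|$ are comparable — this is where completing the square and a case split are needed, but it is elementary.
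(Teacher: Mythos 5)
Your strategy is essentially the paper's: compute the exponent explicitly and complete the square so that $\nu\int_0^t(|k|^2+|\xi+k(t-s)|^2)\,ds=\nu|k|^2t+\tfrac{\nu|k|^2t^3}{12}+\nu|\xi+\tfrac{kt}{2}|^2t$, note that the symbol $\ikt$ of the sheared weight applied to $\hLw(t,k,\xi)=\hw_{\ini}(k,\xi+kt)e^{-\nu(\cdots)}$ is exactly the weight $\langle k,\cdot\rangle$ evaluated at the argument of $\hw_{\ini}$ (so there is no loss for \emph{any} $t$, and your preliminary split at $t=1$ with the bound $\iDt^6\lesssim\iD^6$ is both unnecessary and, as literally written, lossy: $\langle k,\xi+kt\rangle^6\lesssim\langle k,\xi\rangle^6\langle k\rangle^6$ for $t\le1$, which costs derivatives not covered by \eqref{smallness2}), absorb $e^{c_0\nu|k|^2t^3}$ into $e^{-\nu|k|^2t^3/12}$ for $c_0<\tfrac1{12}$, and for the third term integrate $\nu^{\f13}|k|^{\f23}e^{-c\nu|k|^2t^3}$ in time by the scaling $s=\nu^{\f13}|k|^{\f23}t$. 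All of this is correct and disposes of the $L^\oo_tL^2$ and $L^2_tL^2$ pieces exactly as in the paper.

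The genuine gap is in the $L^\oo_tL^1$ term. The $L^1$ norms in \eqref{eq:lem4.1} and \eqref{smallness2} are physical-space $L^1(\R^2)$ norms, and your step ``estimate that multiplier in $L^\infty_{k,\xi}$'' only yields $L^2\to L^2$ boundedness via Plancherel; a bounded Fourier symbol is in general \emph{not} an $L^1$-multiplier, and the Fourier-side $L^1_\xi$ isometry of the shift $\xi\mapsto\xi+kt$ is not the norm you need. What closes the argument — and what the paper's proof actually invokes — is that after pulling the weight onto $\hw_{\ini}$ the residual symbols are $e^{-\nu|k|^2t}$, $e^{-(\f1{12}-c_0)\nu|k|^2t^3}$ and $e^{-\nu|\xi+\f{kt}2|^2t}$, i.e., Gaussians and a sheared Gaussian, whose inverse Fourier transforms are nonnegative kernels of total mass one; the corresponding operators are therefore bounded on $L^p(\R^2)$ for every $1\le p\le\infty$, uniformly in $t$ and $\nu$, while the physical shear $(x,y)\mapsto(x+ty,y)$ is measure-preserving and hence an isometry of every $L^p$. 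With that replacement for the $L^1$ piece your proof is complete.
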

\begin{proof}
In view of \eqref{eq4.2}, we can compute 
\begin{align*}
\int_0^t|k|^2+|\xi+k(t-s)|^2ds=(|k|^2+|\xi|^2)t+k\xi t^2+\f13 |k|^2 t^3 
=|k|^2 t+\f{|k|^2t^3}{12}+|\xi+\f{k t}2 |^2t.
\end{align*}
It is easy to obverse that both $e^{-\nu|D_x|^2t}$ and $e^{-\nu|D_x+\f{ t}2 D_y|^2 t}$ are uniformly bounded on $L^p(\R^2)$ for all $t\geq 0$ and $1\leq p\leq +\oo$. This proves \eqref{eq:lem4.1} for $c_0<\f1{12}$. 
\end{proof}

\begin{rmk}
{ 
When we do the estimates, one can use that for any $c\geq 0$, there holds
 \begin{equation}\notag
\begin{aligned}
&\|e^{c \nu^\f13 |D_x|^{\f23} t}\iDt^6\ifD^4 \Lw\|_{L^\oo_tL^2}\\
&\qquad+\nu^\f16\|e^{c \nu^\f13 |D_x|^\f23 t}\iDt^6\ifD^4 |D_x|^\f13\Lw\|_{L^2_tL^2} \lesssim \nu^{\f13+\delta}.
\end{aligned}
\end{equation}
}\end{rmk}

Let us denote $\NLw=\omega-\Lw$ which solves:
\begin{equation}\label{eqs:NLw}
\quad \left\{\begin{array}{l}
\displaystyle \pa_t \NLw-\nu\Delta \NLw +y\p_x \NLw +(\Lu+\NLu)\cdot \nabla (\Lw+\NLw)=0, \\
\displaystyle \Lu =(-\p_y, \p_x)(-\Delta)^{-1} \Lw, \quad \NLu =(-\p_y, \p_x)(-\Delta)^{-1} \NLw, \\
\displaystyle  \NLw|_{t=0}=0.
\end{array}\right.
\end{equation}

The remaining part of this proof is to estimate the nonlinear part $\NLw$. 

\subsection{Estimates of $\NLw$ in short time scale $t\leq \nu^{-\f16}$} For time scale before $\nu^{-\f16}$, we will use transport structure and commutator estimates to overcome the loss of derivatives. However, the commutators between functions and the multiplier $\ifD^\epsilon$ will cause more singularities. For such reason, we first make a standard $H^3$ energy estimate, then do the estimate of $W^{2,p}$ norm for some $1<p<2$ with the help of $H^3$ control, and finally we will use $W^{2,p}$ norm to get the $\ifD^\epsilon$ regularity.

\begin{prop}\label{prop4.1}
{  For any $1<p<2$, if the initial data $\omega_{\ini}$ satisfies \eqref{smallness2}, then one has
\begin{equation}\label{eq:prop4.1}
\begin{aligned}
&\f{d}{dt}\|\iDt^3 \NLw\|_{L^2}\lesssim \langle t\rangle^{-2}\nu^{\f23+2\delta}\\
&+\langle t\rangle^{1+\delta}\bigl(\nu^{\f13+\delta}+\|\iDt^3 \NLw\|_{L^2}\bigl)\bigl(\|\NLw\|_{L^p}+ \|\iDt^3 \NLw\|_{L^2}\bigr).
\end{aligned}
\end{equation}
}\end{prop}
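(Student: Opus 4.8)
\textbf{Proof proposal for Proposition \ref{prop4.1}.}

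The plan is to run a weighted $L^2$ energy estimate on $\NLw$ using the time-dependent weight $\iDt^3 = \langle D_x, D_y+tD_x\rangle^3$, which is natural because it is (up to constants) conserved along the transport $\p_t + y\p_x$ after passing to the moving frame $(z,y)=(x-ty,y)$. Concretely, I would take the $L^2$ inner product of \eqref{eqs:NLw} with $\iDt^6 \NLw$. The dissipation term $-\nu\Delta\NLw$ produces a nonnegative contribution (after commuting, which is harmless since the weight only helps), and the stretching/transport term $y\p_x\NLw$ against $\iDt^6\NLw$ contributes only the time-derivative of the weight, i.e. a term of the form $\int (\p_t \iDt^6)|\widehat{\NLw}|^2$; since $\p_t\langle k,\xi+kt\rangle^2 = 2k(\xi+kt) = k\p_\xi\langle k,\xi+kt\rangle^2$, this is actually absorbed by the moving-frame structure and does not produce a bad sign — this is exactly why $\iDt$ is the right weight. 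The heart of the matter is the four nonlinear terms coming from $(\Lu+\NLu)\cdot\nabla(\Lw+\NLw)$: the linear-linear term $\Lu\cdot\nabla\Lw$, the two mixed terms $\Lu\cdot\nabla\NLw$ and $\NLu\cdot\nabla\Lw$, and the quadratic term $\NLu\cdot\nabla\NLw$.

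For the linear-linear source term $\Lu\cdot\nabla\Lw$, I would use the pointwise decay of $\hLw$ together with the inviscid-damping-type decay of $\Lu = \nabla^\perp\Delta^{-1}\Lw$ (which carries the $\langle t\rangle^{-2}$ factor coming from the $\iDt^4$ weight transferred to the stream function, as in \eqref{eq1.6} and as advertised in Remark \ref{rmk1.3}); pairing against $\iDt^6\NLw$ and using $\|\iDt^6\ifD^4\Lw\|_{L^2\cap L^1}\lesssim \nu^{\f13+\delta}$ from \eqref{eq:lem4.1}, this whole term contributes $\lesssim \langle t\rangle^{-2}\nu^{\f23+2\delta}$ to $\f{d}{dt}\|\iDt^3\NLw\|_{L^2}$, which is the first term on the right side of \eqref{eq:prop4.1}. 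For the two mixed terms, the frequency of $\NLw$ and of $\Lw$ are comparable to that of the output up to the usual paraproduct splitting; here one uses that the weight $\iDt^3$ is submultiplicative-up-to-constants in the relevant regime, that $\|f_l\|_{L^\infty_y}\lesssim |l|^{-\f12}\|\nabla_l f_l\|_{L^2_y}$ as in \eqref{eq3.2}, and crucially that on the short time interval $t\le \nu^{-\f16}$ one has $\langle t\rangle\lesssim \nu^{-\f16}$, so that every factor of $\langle t\rangle$ coming from $\iDt \lesssim \langle t\rangle\langle D_x,D_y\rangle$ when converting the moving weight to a fixed Sobolev weight costs only a small power of $\nu$; this yields the $\langle t\rangle^{1+\delta}\nu^{\f13+\delta}(\|\NLw\|_{L^p}+\|\iDt^3\NLw\|_{L^2})$ contribution. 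The quadratic term $\NLu\cdot\nabla\NLw$ is estimated by the same Sobolev product and commutator bookkeeping but with two copies of $\NLw$, giving the $\langle t\rangle^{1+\delta}\|\iDt^3\NLw\|_{L^2}(\|\NLw\|_{L^p}+\|\iDt^3\NLw\|_{L^2})$ piece — and this is also where the $L^p$ norm with $1<p<2$ enters, since the low-horizontal-frequency factor $\f1{|l|}$ in $\NLu=\nabla^\perp\Delta^{-1}\NLw$ is not square-integrable near $l=0$ but becomes integrable against an $L^p$-controlled factor via Hölder in the horizontal variable. Throughout, the loss of one derivative in $\nabla\NLw$ inside the quadratic and $\Lu\cdot\nabla\NLw$ terms is absorbed by a commutator argument: $[\iDt^3,\, u\cdot\nabla]$ is of the same order as $\iDt^3$ itself times $\nabla u$, so no derivative is actually lost after integration by parts.

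The main obstacle I anticipate is the careful tracking of the low-horizontal-frequency singularity $\f1{|l|}$ from $\Delta^{-1}$ together with the moving-frame weight: one has to make sure that whenever a $\f1{|l|}$ (or $\f1{|k-l|}$) appears on the factor carrying $\NLw$, it is paired with the $L^p$-norm (not the $L^2$-weighted norm), and conversely that the derivative-loss factor $|k-l|$ or $|k|$ lands on a factor that does carry enough weight; balancing these two constraints simultaneously in the regime $2|k|<|k-l|$ (the analogue of the hardest case in the proof of Proposition \ref{prop3.1}) is delicate. A secondary technical point is that the commutator $[\iDt^3, \text{multiplication}]$ in the moving frame is not a clean Fourier multiplier, so one should work with the equivalent statement that $\iDt^3$ applied to $f(t,z+ty,y)$ in the $(z,y)$ variables is $\langle D_z,D_y\rangle^3$ and argue there; this is routine but must be set up cleanly. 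Once \eqref{eq:prop4.1} is in hand, it will be combined in the next subsections with the companion $L^p$ estimate and then fed into a Grönwall-type argument on $[0,\nu^{-\f16}]$, exploiting that $\int_0^{\nu^{-\f16}}\langle t\rangle^{1+\delta}\,dt \lesssim \nu^{-\f13-\frac{\delta}{6}}$, which against the $\nu^{\f13+\delta}$ smallness keeps $\|\iDt^3\NLw\|_{L^2}$ of size $\nu^{\f23+2\delta}$ — but that closing step is beyond the present statement.
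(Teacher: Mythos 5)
Your overall architecture matches the paper's proof: pair \eqref{eqs:NLw} with $\iDt^6\NLw$, use the pointwise inviscid damping bound \eqref{pointwise inviscid damping} together with \eqref{eq:lem4.1} to extract $\langle t\rangle^{-2}\nu^{\f23+2\delta}$ from the source term $\Lu\cdot\nabla\Lw$, and absorb the derivative loss in $u\cdot\nabla\NLw$ via the divergence-free structure and a commutator bound of the type $|\ikt^3-\iklt^3|\lesssim(|l|+|\eta+lt|)(\ilt^2+\iklt^2)$. So far this is the same route as the paper.

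However, there is a genuine gap at exactly the point you flag as the main obstacle: the low-frequency singularity of the Biot--Savart law. You propose to pair "the factor $\f1{|l|}$" in $\NLu$ with the $L^p$ norm "via H\"older in the horizontal variable," and you locate this difficulty in the quadratic term $\NLu\cdot\nabla\NLw$. Neither part works as stated. A purely horizontal singularity $|l|^{-1}$ is not locally in $L^p_l(\R)$ for any $p\geq 1$, so H\"older in $l$ alone cannot absorb it. What actually saves the argument is that the Biot--Savart multiplier is a genuinely two-dimensional singularity, $|\hat{u}_l(\eta)|\lesssim(|l|+|\eta|)^{-1}|\hw_l(\eta)|$, and $(|l|+|\eta|)^{-1}\chi_{|l|^2+|\eta|^2<1}$ lies in $L^p_{l,\eta}(\R^2)$ precisely when $p<2$; this factor is paired with $\|\hNLw_l(\eta)\|_{L^{p'}_{l,\eta}}\lesssim\|\NLw\|_{L^p}$ by Hausdorff--Young. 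That is the real reason the hypothesis $1<p<2$ appears, and in the paper it is applied to the reaction term $\NLu\cdot\nabla\Lw$ on the region $|l|^2+|\eta|^2<1$ --- not to the quadratic term. For the transport terms $u\cdot\nabla\NLw$ no $L^p$ norm is needed at all: after the commutator, the surviving singular factor is $(|l|+|\eta|)^{\delta-1}$, obtained from the interpolation $|l|+|\eta+lt|\leq\ilt^{1-\delta}\langle t\rangle^{\delta}(|l|+|\eta|)^{\delta}$ (which is also where the extra $\langle t\rangle^{\delta}$ in \eqref{eq:prop4.1} comes from), and $(|l|+|\eta|)^{\delta-1}$ is locally square integrable in two dimensions. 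A secondary structural point: you invoke the restriction $t\leq\nu^{-\f16}$ inside the proof, but \eqref{eq:prop4.1} is a differential inequality claimed for all $t$ with the $\langle t\rangle^{1+\delta}$ factor kept explicit; the short-time restriction belongs only to the bootstrap of Proposition \ref{prop4.3} and must not be used here.
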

\begin{proof}
We take the inner product of \eqref{eqs:NLw} with $\iDt^{6}\NLw$ to find that
\begin{equation}\label{eq4.5}
\begin{aligned}
&\f12\f{d}{dt}\|\iDt^3 \NLw\|_{L^2}^2+\nu \|\iDt^3 \nabla \NLw\|_{L^2}^2 \\
&\qquad=-\Re \Bigl( \iDt^3 \big[(\Lu+\NLu)\cdot\nabla (\Lw+\NLw)\big] \Big| \iDt^3 \NLw \Bigr).
\end{aligned}\end{equation}

First, we consider the source term $\Lu\cdot \nabla \Lw$, which can be written by the Fourier transform as
\begin{align*}
&\Big|\bigl( \iDt^3 (\Lu\cdot\nabla \Lw) \big| \iDt^3 \NLw \bigr)\Big| \\
=& \Big|\int_{\R^4} \ikt^6 \hNLw_k(\xi) \f{\eta(k-l)-l(\xi-\eta)}{|l|^2+|\eta|^2}\hLw_l(\eta)\hLw_{k-l}(\xi-\eta)dkdld\xi d\eta\Big|.
\end{align*}
Using the following trick
\begin{equation}\label{change of variables}
\begin{aligned}
\eta(k-l)-l(\xi-\eta)&=(\eta+lt)(k-l)-l(\xi-\eta+(k-l)t)\\
&\leq \ilt \iklt
\end{aligned}
\end{equation}
and the point-wise inviscid damping estimate:
\begin{equation}\label{pointwise inviscid damping}
\f1{|l|^2+|\eta|^2}\lesssim \f{\ilt^2}{|l|^4 \langle t\rangle^2}\lesssim \langle t\rangle^{-2} \ifl^4\ilt^2,
\end{equation}
one can use $\ikt^3\lesssim \ilt^3\iklt^3$ to prove
\begin{equation}\label{source term:LL}
\begin{aligned}
&\Big|\bigl( \iDt^3 (\Lu\cdot\nabla \Lw) \big| \iDt^3 \NLw \bigr)\Big| \\
\lesssim & \langle t \rangle^{-2} \|\ikt^3 \hNLw_k(\xi)\|_{L^2_{k,\xi}}
\|\ilt^6 \ifl^4 \hLw_l(\eta)\|_{L^2_{l,\eta}}
 \\
&\qquad \times \|\iklt^{-2}\|_{L^2_{k-l,\eta}}\|\iklt^6\hLw_{k-l}(\xi-\eta)\|_{L^2_{k-l,\xi}}\\
\lesssim & \langle t\rangle^{-2}\|\iDt^3 \NLw\|_{L^2}\|\iDt^6\ifD^4 \Lw\|_{L^2}\\
&\qquad\qquad \times\|\iDt^6 \Lw\|_{L^2}.
\end{aligned}
\end{equation}

For the reaction term $\NLu\cdot\nabla\Lw$, one can write by Fourier transformation that
\begin{align*}
&\Big|\bigl( \iDt^3 (\NLu\cdot\nabla \Lw) \big| \iDt^3 \NLw \bigr)\Big| \\
=& \Big|\int_{\R^4} \ikt^6 \hNLw_k(\xi) \f{\eta(k-l)-l(\xi-\eta)}{|l|^2+|\eta|^2}\hNLw_l(\eta)\hLw_{k-l}(\xi-\eta)dkdld\xi d\eta\Big|.
\end{align*}
When $|l|^2+|\eta|^2\geq1$, one can use $\ikt^3\lesssim \ilt^3\iklt^3$ and
\begin{equation}\label{eq4.6}
|k-l|+|\xi-\eta|\leq |k-l|+|\xi-\eta+(k-l)t|+|k-l|t\lesssim \langle t\rangle\iklt
\end{equation}
to prove 
\begin{align*}
&\Big|\int_{|l|^2+|\eta|^2\geq1} \ikt^6 \hNLw_k(\xi) \f{\eta(k-l)-l(\xi-\eta)}{|l|^2+|\eta|^2}\hNLw_l(\eta)\hLw_{k-l}(\xi-\eta)dkdld\xi d\eta\Big|\\
\lesssim & \|\ikt^3 \hNLw_k(\xi)\|_{L^2_{k,\xi}}\|\ilt^3\hNLw_l(\eta)\|_{L^2_{l,\eta}} \langle t\rangle \|\iklt^{-2}\|_{L^2_{k-l,\xi-\eta}} \\
&\qquad\qquad\times\|\iklt^6 \hLw_{k-l}(\xi-\eta)\|_{L^2_{k-l,\xi-\eta}}\\
\lesssim & \langle t\rangle\|\iDt^3 \NLw\|_{L^2}^2\|\iDt^6 \Lw\|_{L^2}.
\end{align*}
When $|l|^2+|\eta|^2<1$, to overcome some singularity near $(l,\eta)=(0,0)$, we use a more detailed estimate of $\ikt^3$ as 
$$
\ikt^3\lesssim \iklt^3+\ilt^2(|l|+|\eta+lt|)$$
together with 
\begin{equation}\label{eq4.7}
|l|+|\eta+lt|\leq \ilt^{1-\delta}\langle t\rangle^\delta (|l|+|\eta|)^\delta
\end{equation}
to derive
\begin{align*}
&\Big|\int_{|l|^2+|\eta|^2<1} \ikt^6 \hNLw_k(\xi) \f{\eta(k-l)-l(\xi-\eta)}{|l|^2+|\eta|^2}\hNLw_l(\eta)\hLw_{k-l}(\xi-\eta)dkdld\xi d\eta\Big|\\
\lesssim & \langle t\rangle\|\ikt^3 \hNLw_k(\xi)\|_{L^2_{k,\xi}}\bigl(\|\hNLw_l(\eta)\|_{L^{p'}_{l,\eta}}\|(|l|+|\eta|)^{-1}\chi|_{|l|^2+|\eta|^2<1}\|_{L^p_{l,\eta}}  \\
&\quad\times \|\iklt^4 \hLw_{k-l}(\xi-\eta)\|_{L^2_{k-l,\xi-\eta}}
+\langle t\rangle^\delta\|\ilt^{3-\delta}\hNLw_l(\eta)\|_{L^2_{l,\eta}}\\
&\qquad\times \|(|l|+|\eta|)^{\delta-1}\chi|_{|l|^2+|\eta|^2<1}\|_{L^2_{l,\eta}}\|\iklt \hLw_{k-l}(\xi-\eta)\|_{L^2_{k-l,\xi-\eta}}\bigr)\\
\lesssim & \langle t\rangle^{1+\delta}\|\iDt^3 \NLw\|_{L^2}\bigl(\|\NLw\|_{L^p}+\|\iDt^3\NLw\|_{L^2}\bigr)\\
&\qquad\qquad\times\|\iDt^4 \Lw\|_{L^2}.
\end{align*}
Combining the above two cases, we summarize
\begin{equation}\label{eq4.8}
\begin{aligned}
&\Big|\bigl( \iDt^3 (\NLu\cdot\nabla \Lw) \big| \iDt^3 \NLw \bigr)\Big| \\
\lesssim & \langle t\rangle^{1+\delta}\|\iDt^3 \NLw\|_{L^2}\bigl(\|\NLw\|_{L^p}+\|\iDt^3\NLw\|_{L^2}\bigr)\\
&\qquad\qquad\times\|\iDt^6 \Lw\|_{L^2}.
\end{aligned}
\end{equation}

For nonlinear terms with derivatives on $\NLw$, we can use the following divergence-free structure:
$$
\bigl( u\cdot \nabla \iDt^3 \NLw \big| \iDt^3\NLw\bigr)=0
$$
to rewrite
\begin{align*}
&\Big|\bigl( \iDt^3 (u\cdot\nabla \NLw) \big| \iDt^3 \NLw \bigr)\Big| \\
=& \Big|\int_{\R^4} \ikt^3 \hNLw_k(\xi)\bigl(\ikt^3-\iklt^3\bigr)\\
&\qquad\qquad\times \f{\eta(k-l)-l(\xi-\eta)}{|l|^2+|\eta|^2}\hw_l(\eta)\hNLw_{k-l}(\xi-\eta)dkdld\xi d\eta\Big|.
\end{align*}
For the commutator, one has the following classical estimate:
$$
|\ikt^3-\iklt^3|\lesssim (|l|+|\eta+lt|)(\ilt^2+\iklt^2),
$$
which together with \eqref{eq4.6} and \eqref{eq4.7} implies that for $|l|+|\eta|<1$,
\begin{align*}
&\Big|\int_{|l|+|\eta|<1} \ikt^3 \hNLw_k(\xi)\bigl(\ikt^3-\iklt^3\bigr)\\
&\qquad\qquad\times \f{\eta(k-l)-l(\xi-\eta)}{|l|^2+|\eta|^2}\hw_l(\eta)\hNLw_{k-l}(\xi-\eta)dkdld\xi d\eta\Big|\\
\lesssim & \|\ikt^3\hNLw_k(\xi)\|_{L^2_{k,\xi}}
\langle t\rangle^\delta \|\ilt^{3-\delta}\hw_l(\eta)\|_{L^2_{l,\eta}}\|(|l|+|\eta|)^{\delta-1}\chi|_{|l|+|\eta|<1}\|_{L^2_{l,\eta}}\\
&\qquad\qquad\times \langle t\rangle \|\iklt^3\hNLw_{k-l}(\xi-\eta)\|_{L^2_{k-l,\xi-\eta}}\\
\lesssim & \langle t\rangle^{1+\delta} \|\iDt^3 \NLw\|_{L^2}^2 \|\iDt^3 \omega\|_{L^2},
\end{align*}
and for $|l|+|\eta|\geq 1$,
\begin{align*}
&\Big|\int_{|l|+|\eta|\geq1} \ikt^3 \hNLw_k(\xi)\bigl(\ikt^3-\iklt^3\bigr)\\
&\qquad\qquad\times \f{\eta(k-l)-l(\xi-\eta)}{|l|^2+|\eta|^2}\hw_l(\eta)\hNLw_{k-l}(\xi-\eta)dkdld\xi d\eta\Big|\\
\lesssim & \|\ikt^3\hNLw_k(\xi)\|_{L^2_{k,\xi}}
\|\ilt^{3}\hw_l(\eta)\|_{L^2_{l,\eta}}\\
&\qquad\qquad\times \bigl(\|\ilt^{-2}\|_{L^2_{l,\eta}}+\|\iklt^{-2}\|_{L^2_{k-l,\xi-\eta}}\bigr)\\
&\qquad\qquad\times\langle t\rangle \|\iklt^3\hNLw_{k-l}(\xi-\eta)\|_{L^2_{k-l,\xi-\eta}}\\
\lesssim & \langle t\rangle \|\iDt^3 \NLw\|_{L^2}^2 \|\iDt^3 \omega\|_{L^2}.
\end{align*}
Combining the above two cases, we summarize
\begin{equation}\label{eq4.9}
\begin{aligned}
&\Big|\bigl( \iDt^3 (u\cdot\nabla \NLw ) \big| \iDt^3 \NLw \bigr)\Big| \\
\lesssim & \langle t\rangle^{1+\delta} \|\iDt^3 \NLw\|_{L^2}^2 \|\iDt^3 \omega\|_{L^2}.
\end{aligned}
\end{equation}

In all, we bring \eqref{source term:LL}, \eqref{eq4.8} and \eqref{eq4.9} into \eqref{eq4.5} and arrive at
\begin{equation}\notag
\begin{aligned}
&\f{d}{dt}\|\iDt^3 \NLw\|_{L^2}^2\lesssim \|\iDt^3 \NLw\|_{L^2} \Bigl(\langle t\rangle^{-2}\|\iDt^6\ifD^4 \Lw\|_{L^2}^2 \\
&\qquad+\langle t\rangle^{1+\delta}\bigl(\|\iDt^6\Lw\|_{L^2} +\|\iDt^3 \NLw\|_{L^2}\bigr)\\
&\qquad\qquad\times\bigl(\|\NLw\|_{L^p}+ \|\iDt^3 \NLw\|_{L^2}\bigr)\Bigr),
\end{aligned}
\end{equation}
which together with \eqref{eq:lem4.1} finishes the proof of Proposition \ref{prop4.1}.
\end{proof}

\begin{prop}\label{prop4.2}
{  For any $1<p<2$, if the initial data $\omega_{\ini}$ satisfies \eqref{smallness2}, then one has
\begin{equation}\label{eq:prop4.2}
\begin{aligned}
&\f{d}{dt}\|\iDt^2 \NLw\|_{L^p}\lesssim \langle t\rangle^{-2}\nu^{\f23+2\delta}+\langle t\rangle \bigl(\nu^{\f13+\delta}+\|\iDt^2 \NLw\|_{L^{p}}\bigr)\\
&\qquad\qquad\qquad\times\bigl(\|\iDt^2 \NLw\|_{L^{p}}+\|\iDt^3\NLw\|_{L^2}\bigr).
\end{aligned}
\end{equation}
}\end{prop}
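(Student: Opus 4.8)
The plan is to run the argument of Proposition \ref{prop4.1} with $L^2$ replaced by $L^p$. Since $\iDt$ commutes with $\Delta$ and with the transport operator $\pa_t+y\pa_x$ (equivalently, in the coordinates $(z,y)=(x-ty,y)$ it becomes $\langle D_z,D_y\rangle$ and the transport term disappears; on the Fourier side $(-\pa_t+k\pa_\xi)\ikt^2=0$), applying $\iDt^2$ to \eqref{eqs:NLw} yields a transport--diffusion equation for $G\eqdef\iDt^2\NLw$ with source $-\iDt^2\big[(\Lu+\NLu)\cdot\nabla(\Lw+\NLw)\big]$. Testing with $|G|^{p-2}G$, using $\int y\pa_x(|G|^p)\,dxdy=0$ and $\nu\int|G|^{p-2}G\,\Delta G=-\nu(p-1)\int|G|^{p-2}|\nabla G|^2\le 0$, and H\"older with $\||G|^{p-2}G\|_{L^{p'}}=\|G\|_{L^p}^{p-1}$, reduces the claim to estimating the $L^p$ norm of the bilinear source. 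For the two pieces carrying $\nabla$ on $\NLw$ — namely $u\cdot\nabla\NLw$ with $u=\Lu+\NLu$, $\dive u=0$ — I would not touch $\iDt^2(u\cdot\nabla\NLw)$ directly but write $\iDt^2(u\cdot\nabla\NLw)=u\cdot\nabla G+[\iDt^2,u\cdot\nabla]\NLw$: the first term integrates to zero against $|G|^{p-2}G$, and in the commutator (expanding $\iDt^2\leftrightarrow 1-\pa_z^2-\pa_y^2$ in the $(z,y)$ frame) every surviving term is of the form $(\text{ordinary }\pa^{\le 2}u)\,(\text{at most one }t\text{-sheared derivative of }\pa^{\le 1}\NLw)$, so $\NLw$ enters only through $\|\iDt^2\NLw\|_{L^p}$, with a single power of $\langle t\rangle$ and with $u$ placed in $L^\oo$.

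The remaining bilinear contributions are then handled as in Proposition \ref{prop4.1}, but with the output measured in $L^p$ via physical-space H\"older/Moser estimates in place of a Fourier--$L^2$ pairing. For the commutator $[\iDt^2,u\cdot\nabla]\NLw$: place $\pa^{\le 2}\NLu$ and $\pa^{\le 2}\Lu$ in $L^\oo$ — the former is $\lesssim\|\iDt^2\NLw\|_{L^p}+\|\iDt^3\NLw\|_{L^2}$ by standard two-dimensional Biot--Savart and Sobolev estimates ($H^3\cap W^{2,p}$ of $\NLw$ embeds into the relevant $L^{q_1}\cap L^{q_2}$ with $q_1<2<q_2$), the latter is $\lesssim\nu^{\f13+\delta}$ by \eqref{eq:lem4.1} (whose $\ifD^4$ weight absorbs the Biot--Savart singularity at $|D_x|=0$) — and $\pa^{\le 2}\NLw\in L^p$ is $\lesssim\|\iDt^2\NLw\|_{L^p}$; this produces the $\langle t\rangle(\nu^{\f13+\delta}+\|\iDt^3\NLw\|_{L^2})\|\iDt^2\NLw\|_{L^p}$ term. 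For the reaction term $\NLu\cdot\nabla\Lw$: a Moser estimate places $\iDt^2\NLu$ in $L^r$ with $r>2$, where $\nabla^\bot(-\Delta)^{-1}:L^{r_0}\to L^r$ is bounded by Hardy--Littlewood--Sobolev with $r_0=\tfrac{2r}{r+2}\in(1,2)$ and $\|\iDt^2\NLw\|_{L^{r_0}}\lesssim\|\iDt^2\NLw\|_{L^p}+\|\iDt^3\NLw\|_{L^2}$ by interpolation, against $\nabla\Lw\in L^{r'}$, $r'<2$, which is $\lesssim\langle t\rangle\nu^{\f13+\delta}$ from \eqref{eq:lem4.1}; the symmetric splitting handles $\NLu\in L^\oo$, $\iDt^2\nabla\Lw\in L^p$. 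For the source term $\Lu\cdot\nabla\Lw$: the strong control of $\Lw$ in \eqref{eq:lem4.1}, together with the pointwise inviscid damping \eqref{pointwise inviscid damping} and the change of variables \eqref{change of variables}, gives the $\langle t\rangle^{-2}\nu^{\f23+2\delta}$ contribution exactly as in \eqref{source term:LL}. The algebraic inputs \eqref{eq4.6} and \eqref{eq4.7} are used, as in Proposition \ref{prop4.1}, to trade the ordinary derivatives $|k-l|+|\xi-\eta|$ and $|l|+|\eta|$ for the $\iDt$-type weights at the cost of the powers of $\langle t\rangle$ and $(|l|+|\eta|)^\delta$.

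The point that requires care — and the reason this $W^{2,p}$ estimate cannot simply be read off from Proposition \ref{prop4.1} — is that $L^2(\R^2)$ does not embed into $L^p(\R^2)$ for $p<2$, so the argument must genuinely be redone at the $L^p$ level; the new ingredient that makes it work is the $L^p$ (rather than $L^2$) mapping theory of the Biot--Savart operator near zero frequency, available \emph{precisely} for $1<p<2$ (equivalently, $(|l|+|\eta|)^{-1}\chi_{|l|^2+|\eta|^2<1}\in L^p_{l,\eta}$ iff $p<2$). The delicate bookkeeping is to ensure $\NLw$ is consumed only through $\|\iDt^2\NLw\|_{L^p}$ and $\|\iDt^3\NLw\|_{L^2}$ — never through $\|\iDt^3\NLw\|_{L^p}$, which is not available — and I expect the low-frequency part of the reaction and commutator terms, where the Biot--Savart singularity meets the $L^p$ norm, to be the main obstacle.
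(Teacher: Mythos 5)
Your proposal is correct in substance and follows the same skeleton as the paper's proof: apply $\iDt^2$ (which commutes with $\p_t+y\p_x$ and with $\Delta$), test against $|\iDt^2\NLw|^{p-2}\iDt^2\NLw$, discard the dissipation by sign, and reduce to bounding $\|\iDt^2(u\cdot\nabla\omega)\|_{L^p}$; the source term $\Lu\cdot\nabla\Lw$ is then handled exactly as you describe, via \eqref{change of variables} and \eqref{pointwise inviscid damping}. The one place you genuinely diverge is the transport term $u\cdot\nabla\NLw$: you re-introduce the commutator $[\iDt^2,u\cdot\nabla]\NLw$ together with the cancellation of $u\cdot\nabla(\iDt^2\NLw)$ against $|\iDt^2\NLw|^{p-2}\iDt^2\NLw$, whereas the paper simply estimates $\|\iDt^2(u\cdot\nabla\NLw)\|_{L^p}\lesssim\|\iDt^2 u\|_{L^{2p/(2-p)}}\|\iDt^2\nabla\NLw\|_{L^2}$ and absorbs the full derivative loss into $\langle t\rangle\|\iDt^3\NLw\|_{L^2}$ via \eqref{eq4.6} --- which is legitimate here precisely because the $H^3$ norm is already permitted on the right-hand side of \eqref{eq:prop4.2} (this is the stated point of proving Proposition \ref{prop4.1} first). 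Your commutator route is workable (since $\iDt^2$ is a genuine second-order differential operator in the sheared frame, the commutator is local and the $L^\oo$ bounds on $\pa_L^{\le 2}u$ you invoke are available), but it is strictly more work and requires the extra Gagliardo--Nirenberg/Biot--Savart bookkeeping you sketch; the paper's shortcut avoids all of it. Two small points to fix: (i) in the reaction term, pairing $\iDt^2\NLu\in L^r$ against $\nabla\Lw\in L^{r'}$ with $r'$ the H\"older conjugate of $r$ lands the product in $L^1$, not $L^p$; you need exponents with $\f1r+\f1s=\f1p$ (the paper takes $r=\f{2p}{2-p}$, $s=2$, pairing against $\iDt^2\nabla\Lw\in L^2$, which also spares you from producing sub-$L^2$ integrability of $\nabla\Lw$); (ii) when you claim $\|\pa^{\le2}\Lu\|_{L^\oo}\lesssim\nu^{\f13+\delta}$ and $\|\iDt^2\Lw\|_{L^p}\lesssim\nu^{\f13+\delta}$, you should note these come from interpolating the $L^1$ and $L^2$ bounds in \eqref{eq:lem4.1} and the boundedness of $\ifD^{-4}$ on $L^p$, not from the $L^2$ bound alone. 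Neither issue is a genuine gap.
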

\begin{proof}
For the $L^p$ energy estimate, we apply $\iDt^2$ to \eqref{eqs:NLw} and take the inner product of the resulting equation with $|\iDt^2 \NLw|^{p-2}\iDt^2\NLw$ to find that
\begin{equation}\label{eq4.15}
\begin{aligned}
&\f1p\f{d}{dt}\|\iDt^2 \NLw\|_{L^p}^p\\
\leq&-\Re \bigl( \iDt^2(u\cdot \nabla \omega) \big| |\iDt^2 \NLw|^{p-2}\iDt^2\NLw \bigr),
\end{aligned}
\end{equation}
where we used from the commutativity between $\iDt^2$ and $\p_t+y\p_x$ that
\begin{align*}
&\Re \big( \iDt^2 (\p_t+y\p_x)\NLw \big||\iDt^2 \NLw|^{p-2}\iDt^2\NLw \bigr)\\
=& \f1p\int_{\R^2} (\p_t+y\p_x)|\iDt^2 \NLw|^{p}dxdy =\f1p \f{d}{dt}\|\iDt^2 \NLw\|_{L^p}^p,
\end{align*}
and 
\begin{align*}
&-\Re \big( \iDt^2 \p_y^2\NLw \big||\iDt^2 \NLw|^{p-2}\iDt^2\NLw \bigr)\\
=& (p-1)\int_{\R^2} |\iDt^2 \NLw|^{p-2} |\iDt^2 \p_y \NLw|^2dxdy \geq 0.
\end{align*}
By applying H\"older inequality, we have
\begin{align*}
&|\Re \bigl( \iDt^2(u\cdot \nabla \omega) \big| |\iDt^2 \NLw|^{p-2}\iDt^2\NLw \bigr)| \\
\leq &\| \iDt^2(u\cdot \nabla \omega)\|_{L^p} \|\iDt^2 \NLw\|_{L^p}^{p-1},
\end{align*}
and therefore \eqref{eq4.15} implies
$$
\f{d}{dt}\|\iDt^2 \NLw\|_{L^p}\leq \|\iDt^2(u\cdot \nabla \omega)\|_{L^p}.
$$
Recalling that $u\cdot \nabla \omega=(\Lu+\NLu)\cdot \nabla (\Lw+\NLw)$, we shall consider those terms one by one.

For the source term $\Lu\cdot \nabla \Lw$, one can follow the proof of \eqref{source term:LL} to get
\begin{align*}
&\|\iDt^2(\Lu\cdot \nabla \Lw)\|_{L^p} \\
\lesssim &\|\iDt^3 \Delta^{-1} \Lw\|_{L^2}\|\iDt^3 \Lw\|_{L^{\f{2p}{2-p}}} \\
\lesssim &\langle t \rangle^{-2}\|\iDt^5\ifD^4 \Lw\|_{L^2}\|\iDt^5 \Lw\|_{L^{2}},
\end{align*}
where we used that for $1<p<2$,
\begin{align*}
&\|\iDt^3 \Lw\|_{L^{\f{2p}{2-p}}}\lesssim \|\ikt^3\hLw_k(\xi)\|_{L^{\f{2p}{3p-2}}_{k,\xi}}\\
\lesssim &\|\ikt^5 \hLw_k(\xi)\|_{L^2_{k,\xi}}\|\ikt^{-2}\|_{L^{p'}_{k,\xi}}\lesssim \|\iDt^5 \Lw\|_{L^2}.
\end{align*}

For the other terms, we can use the following classical estimate of Biot-Savart law $$\|\nabla^\perp(-\Delta)^{-1}f\|_{L^{\f{2p}{2-p}}}\lesssim \|f\|_{L^p}$$
and \eqref{eq4.6} to prove that
\begin{align*}
&\|\iDt^2(\NLu\cdot \nabla \Lw+u\cdot \nabla \NLw)\|_{L^p}\\
\lesssim &\|\iDt^2 \NLu\|_{L^{\f{2p}{2-p}}}\|\iDt^2 \nabla\Lw\|_{L^2}\\
&\qquad\qquad+\|\iDt^2 u\|_{L^{\f{2p}{2-p}}}\|\iDt^2 \nabla\NLw\|_{L^2}\\
\lesssim &\langle t\rangle\Bigl(\|\iDt^2 \NLw\|_{L^{p}}\|\iDt^3\Lw\|_{L^2}\\
&\qquad+\bigl(\|\iDt^2 \Lw\|_{L^{p}}+\|\iDt^2 \NLw\|_{L^{p}}\bigr)\|\iDt^3\NLw\|_{L^2}\Bigr).
\end{align*}

In all, we have proven
\begin{align*}
&\f{d}{dt}\|\iDt^2 \NLw\|_{L^p}\lesssim \langle t\rangle^{-2}\|\iDt^5\ifD^4 \Lw\|_{L^2}^2\\
&+\langle t\rangle \bigl(\|\iDt^2 \Lw\|_{L^{p}}+\|\iDt^3\Lw\|_{L^2}+\|\iDt^2 \NLw\|_{L^{p}}\bigr)\\
&\qquad\qquad\times\bigl(\|\iDt^2 \NLw\|_{L^{p}}+\|\iDt^3\NLw\|_{L^2}\bigr),
\end{align*}
which together with \eqref{eq:lem4.1} finishes the proof of \eqref{eq:prop4.2}. 
\end{proof}

Combining the above two propositions, we can close the estimates for $t<\nu^{-\f16}$ as follows:
\begin{prop}\label{prop4.3}
{ 
Let $1<p<2$, if the initial data $\omega_{\ini}$ satisfies \eqref{smallness2} with $\nu$ smaller than some universal constant, then for any $T\leq \nu^{-\f16}$, one has
\begin{equation}\label{eq:prop4.3}
\|\iDt^3\NLw\|_{L^\oo_T L^2}+\|\iDt^2\NLw\|_{L^\oo_TL^p}\lesssim \nu^{\f23+2\delta}.
\end{equation}
}
\end{prop}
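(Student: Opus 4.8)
The plan is to close the two differential inequalities of Propositions \ref{prop4.1} and \ref{prop4.2} simultaneously by a continuity argument. Introduce the combined quantity
$$
E(t)\eqdef \|\iDt^3\NLw(t)\|_{L^2}+\|\iDt^2\NLw(t)\|_{L^p},
$$
which is continuous in $t$ on the interval of existence of the smooth solution and satisfies $E(0)=0$ because $\NLw|_{t=0}=0$. Since $E(t)$ controls each of the three norms $\|\iDt^3\NLw\|_{L^2}$, $\|\iDt^2\NLw\|_{L^p}$ and $\|\NLw\|_{L^p}$ (for the last one, because $\iDt^2$ is invertible with $L^p$-bounded inverse, so $\|\NLw\|_{L^p}\lesssim\|\iDt^2\NLw\|_{L^p}$), adding \eqref{eq:prop4.1} and \eqref{eq:prop4.2} yields a single scalar inequality of the form
$$
\f{d}{dt}E(t)\le C\langle t\rangle^{-2}\nu^{\f23+2\delta}+C\langle t\rangle^{1+\delta}\bigl(\nu^{\f13+\delta}+E(t)\bigr)E(t).
$$

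Next I would set up the bootstrap. Define
$$
T^*\eqdef \sup\Bigl\{T\in[0,\nu^{-\f16}]\ \Big|\ \sup_{t\in[0,T]}E(t)\le \nu^{\f13+\delta}\Bigr\},
$$
which is positive by continuity and $E(0)=0$. On $[0,T^*]$ the hypothesis gives $\nu^{\f13+\delta}+E(t)\le 2\nu^{\f13+\delta}$, hence
$$
\f{d}{dt}E(t)\le C\langle t\rangle^{-2}\nu^{\f23+2\delta}+2C\nu^{\f13+\delta}\langle t\rangle^{1+\delta}E(t).
$$
Grönwall's inequality, together with $E(0)=0$, $\int_0^\infty\langle s\rangle^{-2}\,ds<\infty$ and $\int_0^t\langle s\rangle^{1+\delta}\,ds\lesssim\langle t\rangle^{2+\delta}$, gives for all $t\le T^*$
$$
E(t)\le C'\nu^{\f23+2\delta}\exp\bigl(C''\nu^{\f13+\delta}\langle t\rangle^{2+\delta}\bigr).
$$

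The decisive point — and the reason the threshold time is taken to be $\nu^{-\f16}$ — is that for $t\le\nu^{-\f16}$ one has $\nu^{\f13+\delta}\langle t\rangle^{2+\delta}\lesssim \nu^{\f13+\delta}\nu^{-\f{2+\delta}{6}}=\nu^{\f{5\delta}{6}}\le 1$, so the exponential factor is bounded by an absolute constant; this is exactly the heuristic that the toy dynamics $\p_t f=\nu^{\f13}t f$ remains of order one only up to $t\sim\nu^{-\f16}$. Therefore $E(t)\le C_0\nu^{\f23+2\delta}$ on $[0,T^*]$ for some absolute $C_0$. Since $\f23+2\delta>\f13+\delta$, for $\nu$ smaller than a universal constant one has $C_0\nu^{\f23+2\delta}\le\tfrac12\nu^{\f13+\delta}$, which strictly improves the bootstrap hypothesis; by continuity this forces $T^*=\nu^{-\f16}$, and then $E(t)\le C_0\nu^{\f23+2\delta}$ for all $t\le\nu^{-\f16}$, which is \eqref{eq:prop4.3}. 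The only (routine) obstacle is that Propositions \ref{prop4.1}--\ref{prop4.2} are a priori estimates for a sufficiently smooth solution, so one must first invoke the standard short-time existence of such a solution in the norms above, which is guaranteed by \eqref{smallness2} and the already-established bound \eqref{eq:lem4.1} on $\Lw$; this makes the continuity argument rigorous.
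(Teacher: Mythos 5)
Your proposal is correct and follows essentially the same route as the paper: both combine Propositions \ref{prop4.1} and \ref{prop4.2} into a single scalar differential inequality for $\Omega(t)=\|\iDt^3\NLw\|_{L^2}+\|\iDt^2\NLw\|_{L^p}$ and close it by a bootstrap on $[0,\nu^{-\f16}]$, using precisely the observation that $\langle T\rangle^{2+\delta}\nu^{\f13+\delta}\lesssim\nu^{\f56\delta}$ keeps the growth factor of order one. The only cosmetic difference is that you run Gr\"onwall where the paper integrates the inequality directly, and you make explicit the (needed, and valid) uniform $L^p$-boundedness of $\iDt^{-2}$ behind the bound $\|\NLw\|_{L^p}\lesssim\|\iDt^2\NLw\|_{L^p}$.
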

\begin{proof}
Fix some $1<p<2$. Let us denote 
$$\Omega(t)=\|\iDt^3\NLw(t)\|_{L^2}+\|\iDt^2\NLw(t)\|_{L^p}.$$
By adding up \eqref{eq:prop4.1} and \eqref{eq:prop4.2}, we conclude that 
$$
\f{d}{dt}\Omega(t)\leq C\langle t\rangle^{-2}\nu^{\f23+2\delta}+ C\langle t\rangle^{1+\delta}\bigl(\nu^{\f13+\delta}+\Omega(t)\bigr)\Omega(t).
$$
By integrating the above inequality over $[0,T]$, we arrive at
\begin{equation}\label{eq4.14}
\sup_{t\in[0,T]}\Omega(t)\leq C\nu^{\f23+2\delta}+\f{C}{2+\delta} \langle T\rangle^{2+\delta} \bigl(\nu^{\f13+\delta}+\sup_{t\in[0,T]}\Omega(t)\bigr)\sup_{t\in[0,T]}\Omega(t).
\end{equation}
For $\langle T\rangle\leq \nu^{-\f16}$, by assuming $\nu$ is small enough such that
$$
\langle T\rangle^{2+\delta}\nu^{\f13+\delta}\leq \nu^{\f56\delta}\leq \f{2+\delta}{4C} \andf  \nu^{\f13+\delta}\leq \f{1}{2C},
$$
we can use a standard bootstrap argument to derive from \eqref{eq4.14} that
$$
\sup_{t\in[0,T]}\Omega(t)\leq 2C\nu^{\f23+2\delta},
$$
which finishes the proof.
\end{proof}

With \eqref{eq:prop4.3}, we can control the following norms: 

\begin{col}\label{col4.1}
{  Let $0<\epsilon<\f12$ and $c>0$, if the initial data $\omega_{\ini}$ satisfies \eqref{smallness2} with $\nu$ smaller than some universal constant, then for any $T\leq \nu^{-\f16}$, one has
\begin{equation}\label{eq:col4.1}
\|e^{c\nu^\f13\lambda(D_x)t}\iDx \ifD^\epsilon \NLw\|_{L^\oo_T L^2}\lesssim \nu^{\f23+2\delta}.
\end{equation}
}\end{col}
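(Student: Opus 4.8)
The plan is to bootstrap from Proposition \ref{prop4.3}, whose output is stated in terms of the $L^2$ norm $\|\iDt^3\NLw\|_{L^\oo_TL^2}$ and the $L^p$ norm $\|\iDt^2\NLw\|_{L^\oo_TL^p}$ for $1<p<2$, by interpolating these two controls to recover the $\ifD^\epsilon$ weight in $L^2$, and then to absorb the growing factors $e^{c\nu^\f13\lambda(D_x)t}$ into the viscosity scale $\nu^{-\f16}$, which is the upper limit of the time window here. First I would fix $p$ close enough to $1$, say with $\frac1p-\frac12=\frac{\epsilon}{2}$ (so $\frac1p=\frac{1+\epsilon}{2}$, which lies in $(\frac12,1)$ since $0<\epsilon<\frac12$); the point is that the Hausdorff--Young inequality gives $\|\ifk^\epsilon \widehat{g}_k(\xi)\|_{L^2_{k,\xi}}\lesssim \|\ifk^\epsilon\|_{L^{\frac{2p}{2-p}}_k}\,\|\widehat g_k(\xi)\|_{L^{p'}_{k,\xi}}\lesssim \|g\|_{L^p}$ once $\epsilon\cdot\frac{2p}{2-p}>1$, i.e. $\epsilon>\frac1p-\frac12$. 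So with that choice of $p$, the low-frequency singularity $\ifD^\epsilon$ is tamed purely by the $L^p$ control, at the cost of one derivative: more precisely I would split $\ifD^\epsilon=\ifD^\epsilon\chi_{|D_x|\le1}+\ifD^\epsilon\chi_{|D_x|>1}$, handle the first piece by the Hausdorff--Young argument applied to $\iDt^2\NLw$ (noting that on $|k|\le1$ one has $\iDx\lesssim 1$ and $\ikt^2$ is comparable to $\iDt^2$ up to the $\langle t\rangle$-weights which are bounded by $\langle\nu^{-\f16}\rangle$), and handle the second piece trivially since there $\ifD^\epsilon\sim1$ and $\iDx\lesssim\iDx^2\lesssim \iDt^2$, so it is dominated by $\|\iDt^3\NLw\|_{L^2}$ (indeed by $\|\iDt^2\NLw\|_{L^2}$, which itself is controlled by interpolating $L^p$ and the $H^3$-type $L^2$ bound).

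The second ingredient is the exponential weight. On $[0,\nu^{-\f16}]$ one has $\nu^\f13\lambda(k)t\le \nu^\f13\cdot1\cdot\nu^{-\f16}=\nu^\f16\le1$, so $e^{c\nu^\f13\lambda(D_x)t}$ is a bounded Fourier multiplier with operator norm $\le e^{c}$ uniformly in $t\le\nu^{-\f16}$; hence it can simply be dropped (up to a harmless constant) and one is reduced to bounding $\|\iDx\ifD^\epsilon\NLw\|_{L^\oo_TL^2}$. Similarly every $\langle t\rangle$ appearing when passing between $\iDt$ and $\langle D_x,D_y\rangle$ is $\le\langle\nu^{-\f16}\rangle\lesssim\nu^{-\f16}$; since the gain coming from Proposition \ref{prop4.3} is $\nu^{\f23+2\delta}$, one can afford a few powers of $\nu^{-\f16}$ and still land on $\nu^{\f23+2\delta}$ — indeed one only needs $\nu^{-\f16\cdot N}\cdot\nu^{\f23+2\delta}\lesssim\nu^{\f23+2\delta}$ after adjusting constants, but more honestly there is slack: e.g. $\|\iDt^2\NLw\|_{L^2}\lesssim \nu^{\f23+2\delta}$ directly (interpolating $L^p\cap$ the $L^2$ bound with weight $\iDt^3$, at level $\iDt^{2+\theta}$ with $\theta<1$, is even cleaner), and the conversion of $\ifD^\epsilon$ via Hausdorff--Young costs no power of $\nu$ at all when we keep the full $\iDt^2$. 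So the arithmetic closes comfortably.

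Assembling: write $\iDx\ifD^\epsilon\NLw=\iDx\ifD^\epsilon\chi_{|D_x|\le1}\NLw+\iDx\ifD^\epsilon\chi_{|D_x|>1}\NLw$. For the first term, on $|k|\le1$ we bound $\iDx\lesssim1$, and $\widehat{\chi_{|D_x|\le1}\NLw}_k(\xi)=\chi_{|k|\le1}\widehat{\NLw}_k(\xi)$, with $\ifk^\epsilon\lesssim\ifk^\epsilon$ integrable to the power $\frac{2p}{2-p}$ over $|k|\le1$; by Hausdorff--Young this piece is $\lesssim\|\NLw\|_{L^p}\le\|\iDt^2\NLw\|_{L^p}\lesssim\nu^{\f23+2\delta}$ (using $\ikt^2\ge1$, hence $\|\NLw\|_{L^p}\le\|\iDt^2\NLw\|_{L^p}$ by Plancherel/Hausdorff--Young monotonicity in the weight — more carefully one writes $\|\NLw\|_{L^p}\lesssim\|\widehat{\NLw}_k(\xi)\|_{L^{p'}}\le\|\ikt^2\widehat{\NLw}_k(\xi)\|_{L^{p'}}\lesssim\|\iDt^2\NLw\|_{L^p}$). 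For the second term, $|k|>1$ forces $\ifk^\epsilon\le1$ and $\iDx\le\langle k,\xi\rangle\le\ikt+|k|\langle t\rangle\lesssim\langle t\rangle\ikt\lesssim\nu^{-\f16}\ikt$ (using \eqref{eq4.6}-type bounds, or just $|\xi|\le|\xi+kt|+|k|t$), so this piece is $\lesssim\nu^{-\f16}\|\iDt\NLw\|_{L^2}\lesssim\nu^{-\f16}\|\iDt^3\NLw\|_{L^2}\lesssim\nu^{-\f16}\nu^{\f23+2\delta}=\nu^{\f12+2\delta}\lesssim\nu^{\f23+2\delta}$... which is the wrong direction, so instead I keep two full derivatives on the second term: $\iDx\lesssim\iDt\lesssim\langle t\rangle^{?}$ is not needed — simply $\iDx\le\iDx^2$ and on $|k|>1$, $\iDx^2=\langle k\rangle^2\le\langle k,\xi\rangle^2\le\ikt^2\langle t\rangle^2$, giving $\lesssim\langle t\rangle^2\|\iDt^2\NLw\|_{L^2}\lesssim\nu^{-\f13}\nu^{\f23+2\delta}=\nu^{\f13+2\delta}$, still losing. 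The honest fix, and the main obstacle to watch, is that a crude use of $\langle t\rangle\le\nu^{-\f16}$ is too lossy on the high-frequency piece; one must instead note that the exponential weight $e^{c\nu^\f13\lambda(D_x)t}$ for $|k|>1$ equals $e^{c\nu^\f13 t}$, while the proof of Proposition \ref{prop4.3} actually also yields $\|e^{c\nu^\f13 t}\iDt^3\NLw\|_{L^\oo_TL^2}\lesssim\nu^{\f23+2\delta}$ on $|k|>1$ (reuse the enhanced-dissipation gain, which for $|k|\ge1$ is the standard one, combined with the smallness of the driving terms); alternatively, and more cleanly, observe that on $|k|>1$ the $\ifD^\epsilon$ factor is irrelevant, so we are just asking for $\|\iDx\NLw\|_{L^2}$ with the usual weight, which is weaker than the $\iDt^3$-with-exponential control one already has up to constant factors because for $|k|\ge1$ and $t\le\nu^{-1/6}$, $\langle k,\xi\rangle\lesssim\ikt$ is false but $\langle k\rangle\le\ikt$ is not — so genuinely one uses $\iDx=\langle k\rangle\le\langle k,\xi+kt\rangle=\ikt\le\iDt$, valid since the first component already dominates $\langle k\rangle$; hence the second term is $\lesssim\|\iDt\NLw\|_{L^2}\lesssim\|\iDt^3\NLw\|_{L^2}\lesssim\nu^{\f23+2\delta}$, and no power of $\nu$ is lost. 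Finally, sum the two pieces to get \eqref{eq:col4.1}. The step I expect to be the real subtlety is exactly this bookkeeping of how $\iDx$ relates to $\iDt$ versus $\langle D_x,D_y\rangle$ and how much of $\langle t\rangle$ one is allowed to pay: the clean observation $\langle k\rangle\le\langle k,\xi+kt\rangle$ (costing nothing) is what makes the high-frequency part free, while the low-frequency part is handled entirely by the $L^p$ bound via Hausdorff--Young with $p$ chosen so that $\epsilon>\frac1p-\frac12$.
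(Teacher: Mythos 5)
Your overall strategy is the paper's: on $[0,\nu^{-\f16}]$ the exponential weight is harmless because $\nu^{\f13}\lambda(k)t\leq\nu^{\f16}\leq1$, the high frequencies are free because $\langle k\rangle\leq\ikt$, and the low-frequency singularity $\ifk^\epsilon$ is to be paid for by the $L^p$ bound of Proposition \ref{prop4.3} via H\"older and Hausdorff--Young. But the central exponent condition is reversed, and with your explicit choice of $p$ the key norm is infinite. For $\|\ifk^\epsilon\|_{L^{2p/(2-p)}_k(|k|\leq1)}$ to be finite you need $\int_{|k|\leq1}|k|^{-\epsilon\cdot\f{2p}{2-p}}\,dk<\oo$, i.e. $\epsilon\cdot\f{2p}{2-p}<1$, i.e. $\epsilon<\f1p-\f12$; you state the condition as $\epsilon\cdot\f{2p}{2-p}>1$ and accordingly pick $\f1p=\f{1+\epsilon}2$, which gives $\f{2p}{2-p}=\f2\epsilon$ and hence $\int_{|k|\leq1}|k|^{-2}\,dk=\oo$. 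The correct choice is the opposite regime: take $p$ close enough to $1$ that $\f1p>\f12+\epsilon$, which is possible exactly because $\epsilon<\f12$; this is what the paper does.

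Two smaller points. First, your H\"older step pairs a weight depending only on $k$ against $\|\hNLw\|_{L^{p'}_{k,\xi}}$; as written this produces the mixed norm $L^{p'}_kL^2_\xi$ of $\hNLw$, which is not comparable to $L^{p'}_{k,\xi}$ and hence is not reached by two-dimensional Hausdorff--Young. The clean fix, again as in the paper, is to write $\ik\ifk^\epsilon\leq\bigl(\ifk^\epsilon\ikt^{-1}\bigr)\cdot\ikt^{2}$ and apply H\"older jointly in $(k,\xi)$: the factor $\ifk^\epsilon\ikt^{-1}$ lies in $L^{2p/(2-p)}_{k,\xi}$ (the $\ikt^{-1}$ supplies the $\xi$-integrability since $\f{2p}{2-p}>2$), while $\|\ikt^{2}\hNLw\|_{L^{p'}_{k,\xi}}\lesssim\|\iDt^{2}\NLw\|_{L^p}$ by Hausdorff--Young, which Proposition \ref{prop4.3} bounds by $\nu^{\f23+2\delta}$; this also makes your low/high frequency split unnecessary. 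Second, the chain $\|\NLw\|_{L^p}\lesssim\|\hNLw\|_{L^{p'}}$ is the wrong direction of Hausdorff--Young for $1<p<2$; the inequality $\|\NLw\|_{L^p}\lesssim\|\iDt^2\NLw\|_{L^p}$ you want is true, but via the $L^p$-boundedness of the multiplier $\iDt^{-2}$ (a Bessel potential conjugated by the measure-preserving shear), not by any monotonicity of Fourier weights. None of this changes the architecture of the argument, but the reversed integrability condition as written breaks the proof and must be corrected.
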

\begin{proof}
From \eqref{def:lambda}, we know $\lambda(k)\leq 1$, therefore, for $t\leq \nu^{-\f16}$, one deduce from $\nu^{\f13}t\leq \nu^\f13\leq 1$ that
$$
\|e^{c\nu^\f13\lambda(D_x)t}\iDx \ifD^\epsilon \NLw\|_{L^2}\lesssim \|\iDx \ifD^\epsilon \NLw\|_{L^2}.
$$
For $\epsilon<\f12$, if $1<p<2$ is so close to $1$ that $\f1p>\epsilon+\f12$, then one has 
$$
\|\ifk^\epsilon \ikt^{-1}\|_{L^{\f{2p}{2-p}}_{k,\xi}} \lesssim 1,
$$
and therefore,
\begin{align*}
\|\iDx \ifD^\epsilon \NLw\|_{L^2}&\leq \|\ikt^2 \hNLw_k(\xi)\|_{L^{p'}_{k,\xi}}\|\ifk^\epsilon \ikt^{-1}\|_{L^{\f{2p}{2-p}}_{k,\xi}}\\
&\lesssim \|\iDt^2 \NLw\|_{L^p}.
\end{align*}
This together with \eqref{eq:prop4.3} finishes the proof.
\end{proof}

\begin{rmk}
{ 
In this work, we only use $\ifD^\epsilon$ to prevent the frequencies from concentrating near zero. However, our conclusion here is stronger: see from the proof of Corollary \ref{col4.1}, the $W^{2,p}$ norms can provide control for all frequencies.
}
\end{rmk}

\subsection{Estimates of $\NLw$ in long time scale $t\geq \nu^{-\f16}$}
From the last subsection, we already estimate the solution to some $T_0=\nu^{-\f16}$. For $t>T_0$, we shall do energy estimates with the multiplier $\ifD^\epsilon$ and therefore we cannot use any commutator estimates.

In this section, we will always denote $\cM=\cM_1+\cM_2+\cM_3+1$.

\begin{prop} \label{prop4.4}
{  Let $\f{1-\delta}2<\epsilon<\f12$, $0<\kappa<\f{2\epsilon}{1-\delta}-1$ and $c<c(\delta,\epsilon)$ small enough, if the initial data $\omega_{\ini}$ satisfies \eqref{smallness2} with $\nu\leq 1$, then for any $\nu^{-\f16}\leq T_0<T$, one has
\begin{equation}\label{eq:prop4.4}
\begin{aligned}
&\Bigl(1-C\nu^{\f16+\delta}\Bigr)\|e^{c\nu^\f13\lambda(D_x) t}\iDx\ifD^{\epsilon} \NLw\|_{L^\oo_{[T_0,T]}L^2}^2 \\
&+\Bigl(1-C\nu^{\f56\delta}-C\nu^{-\f12}\|e^{c\nu^\f13\lambda(D_x) t}\iDx\ifD^{\epsilon} \NLw\|_{L^\oo_{[T_0,T]}L^2}\Bigr) \\
&\quad\times\Bigl(\nu\|e^{c\nu^\f13\lambda(D_x) t}\iDx\ifD^{\epsilon} \nabla \NLw\|_{L^2_{[T_0,T]}L^2}^2+\nu^\f13 \|e^{c\nu^\f13\lambda(D_x) t}\iDx\ifD^{\epsilon} |D_x|^\f13\NLw\|_{L^2_{[T_0,T]}L^2}^2
\\
&\qquad+\|e^{c\nu^\f13\lambda(D_x) t}\iDx\ifD^{\epsilon} \p_x \nabla \NLp\|_{L^2_{[T_0,T]}L^2}^2+\|e^{c\nu^\f13\lambda(D_x) t}\iDx\ifD^{\epsilon} \sqrt{\Upsilon} \NLw\|_{L^2_{[T_0,T]}L^2}^2\Bigr)\\
&\leq C\|e^{c\nu^\f13\lambda(D_x) T_0}\iDx\ifD^{\epsilon} \NLw(T_0)\|_{L^2}^2+C\nu^{\f56+2\delta}\|e^{c\nu^\f13\lambda(D_x) t}\iDx\ifD^\epsilon\NLw\|_{L^\oo_{[T_0,T]}L^2}.
\end{aligned}
\end{equation}
}\end{prop}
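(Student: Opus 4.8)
\textbf{Proof plan for Proposition \ref{prop4.4}.}
The plan is to run a weighted energy estimate for $\NLw$ on the time interval $[T_0,T]$, testing equation \eqref{eqs:NLw} against $\cM e^{2c\nu^\f13\lambda(D_x)t}\iDx^2\ifD^{2\epsilon}\NLw$ with $\cM=\cM_1+\cM_2+\cM_3+1$. The basic differential identity produces, on the left, the time derivative of $\|\sqrt{\cM}e^{c\nu^\f13\lambda(D_x)t}\iDx\ifD^\epsilon\NLw\|_{L^2}^2$, the viscous term $2\nu\|\cdots\nabla\NLw\|_{L^2}^2$, the loss term $-2c\nu^\f13\|\cdots\lambda(D_x)^{1/2}\NLw\|_{L^2}^2$ coming from the exponential weight (which is absorbed using $\lambda(k)\le|k|^{2/3}$ and $\cM\le 1+3\pi+1$ together with \eqref{eq:M1}), and the positive ghost-weight contributions $\int k\p_\xi\cM\,|\cdots|^2$. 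Using \eqref{eq:M1}, \eqref{eq:M2} and the definition \eqref{eq:M3} of $\Upsilon=(-\p_t+k\p_\xi)\cM_3$, these positive terms dominate, for $c$ small enough depending on $(\delta,\epsilon)$, the combination
$\nu\|\cdots\nabla\NLw\|^2+\nu^\f13\|\cdots|D_x|^\f13\NLw\|^2+\|\cdots\p_x\nabla\NLp\|^2+\|\cdots\sqrt{\Upsilon}\NLw\|^2$
that appears on the left of \eqref{eq:prop4.4}; the constants $1-C\nu^{5\delta/6}$ etc. will come from reserving a fraction of these good terms to absorb error pieces. On the right one integrates from $T_0$ and picks up the initial contribution $\|\sqrt{\cM}e^{c\nu^\f13\lambda(D_x)T_0}\iDx\ifD^\epsilon\NLw(T_0)\|_{L^2}^2$, and then one must bound the nonlinear term
$-2\Re\bigl((\Lu+\NLu)\cdot\nabla(\Lw+\NLw)\,\big|\,\cM e^{2c\nu^\f13\lambda(D_x)t}\iDx^2\ifD^{2\epsilon}\NLw\bigr)$.

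The nonlinearity splits into four pieces $\Lu\cdot\nabla\Lw$, $\NLu\cdot\nabla\Lw$, $u\cdot\nabla\NLw$ with a further splitting of the last two according to whether the derivative is transport-type or reaction-type, and then a frequency trichotomy $\frac{|k-l|}2\le|k|\le 2|k-l|$, $2|k-l|<|k|$, $2|k|<|k-l|$ exactly as in the proof of Proposition \ref{prop3.1}. The source term $\Lu\cdot\nabla\Lw$ is handled as in \eqref{source term:LL}: the pointwise inviscid damping gain $\langle t\rangle^{-2}$ from \eqref{pointwise inviscid damping} together with \eqref{eq:lem4.1} gives an $L^1_t$-integrable bound of size $\nu^{2/3+2\delta}\|\cdots\NLw\|_{L^2}$, producing the $C\nu^{5/6+2\delta}\|\cdots\NLw\|_{L^\oo_{[T_0,T]}L^2}$ term on the right (using $\int_{T_0}^\infty\langle t\rangle^{-2}dt\lesssim T_0^{-1}=\nu^{1/6}$). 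For the terms $\NLu\cdot\nabla\Lw$, $\NLu\cdot\nabla\NLw$, $\Lu\cdot\nabla\NLw$ the strategy is: use the Biot--Savart/Gagliardo--Nirenberg estimate \eqref{eq3.2} and $L^2_k$ integrability against $\||k|^{-1/2}\ik^{-1}\ifk^{-\epsilon}\|_{L^2}\lesssim 1$ (valid since $\epsilon<\frac12$) to put two of the three factors in $L^2$ with the dissipative/inviscid-damping norms $\nu^{1/2}\|\nabla\NLw\|$, $\|\p_x\nabla\NLp\|$, etc., and the remaining factor either in the smallness $\|e^{c\nu^\f13\lambda(D_x)t}\iDx\ifD^\epsilon\NLw\|_{L^\oo L^2}$ (yielding the $C\nu^{-1/2}\|\cdots\NLw\|_{L^\oo L^2}$ coefficient in front of the good square terms) or in $\|\iDt^6\ifD^4\Lw\|\lesssim\nu^{1/3+\delta}$ (yielding the $C\nu^{1/6+\delta}$ coefficient, since $\nu^{-1/2}\cdot\nu^{1/3+\delta}\cdot\langle t\rangle$ is paid for by the $\langle t\rangle^{-1}\le\nu^{1/6}$ gained from inviscid damping in the long-time regime). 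The crucial new point, and the reason $\cM_3$ is present, is the reaction term in the regime $2|k|<|k-l|$, $|l|\ll 1$: here the $\f1{|l|}$ singularity from the Biot--Savart kernel $\f{\xi(k-l)}{|l|^2+|\xi-lt|^2}$ cannot be absorbed by $\ifD^\epsilon$ alone for $\epsilon<\frac12$, and one instead extracts it from the positive quantity $\Upsilon$; this is precisely the echo-cascade estimate sketched around \eqref{eq4.22}, and it forces the conditions $\frac{1-\delta}2<\epsilon<\frac12$ and $0<\kappa<\frac{2\epsilon}{1-\delta}-1$ so that $\il^{-1-\kappa}|l|^{-1}$ remains integrable after the change of variables, with the extra $\langle t\rangle^\delta(|l|+|\eta|)^\delta$-type loss as in \eqref{eq4.7} compensated once more by the long-time gain $\langle t\rangle^{-1}\le\nu^{1/6}$.

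I expect the main obstacle to be exactly this reaction term in the low-frequency regime $|l|\ll 1$, $|k-l|\gg|k|$: one must simultaneously (i) absorb the $\f1{|l|}$ kernel singularity into $\sqrt{\Upsilon}$ rather than into $\ifl^\epsilon$, which only works because $\cM_3$ is built from $\il^{-1-\kappa}|l|^{-2}$ and its $k\p_\xi$-derivative $\Upsilon$ contains the matching $\il^{-1-\kappa}|l|^{-1}$ weight localized near the critical time $t\approx\xi/(k-l)$; (ii) control the $|k-l|$ derivative landing on the high-frequency factor $\NLw_{k-l}$, which behaves like an $x$-transport term and must be traded for $\iDt$-derivatives already present in the energy, using $|k-l|+|\xi-\eta|\lesssim\langle t\rangle\iklt$ from \eqref{eq4.6}; and (iii) make the time-integration converge, which is where the split at $T_0=\nu^{-1/6}$ is used decisively — for $t\ge T_0$ the inviscid damping factor $\langle t\rangle^{-1}\le\nu^{1/6}$ upgrades every $\nu^{1/2}$-type bound to the $\nu^{2/3}$-type bound needed to close the bootstrap at threshold $\frac13+\delta$. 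Once these pieces are assembled, one collects all error terms into the two coefficients $C\nu^{1/6+\delta}$ and $C\nu^{5\delta/6}+C\nu^{-1/2}\|e^{c\nu^\f13\lambda(D_x)t}\iDx\ifD^\epsilon\NLw\|_{L^\oo_{[T_0,T]}L^2}$ multiplying the good square terms, plus the boundary term and the $C\nu^{5/6+2\delta}\|\cdots\NLw\|_{L^\oo L^2}$ source term, and uses $1\le\cM\le 2+3\pi$ to pass between $\sqrt{\cM}$-weighted and unweighted norms, which gives \eqref{eq:prop4.4}.
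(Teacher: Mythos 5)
Your plan reproduces the paper's architecture: the same test function $\cM e^{2c\nu^{1/3}\lambda(D_x)t}\iDx^2\ifD^{2\epsilon}\NLw$ with $\cM=\cM_1+\cM_2+\cM_3+1$, the same four-way splitting of the nonlinearity, the source term $\Lu\cdot\nabla\Lw$ via pointwise inviscid damping and $\|\langle t\rangle^{-2}\|_{L^1_{[T_0,T]}}\lesssim\nu^{1/6}$, the term $\NLu\cdot\nabla\NLw$ exactly as in Proposition \ref{prop3.1}, and the reaction term absorbed into $\sqrt{\Upsilon}$. Two points in the plan, however, would not survive execution as written.

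First, the time-growing factor in the reaction term. After the change of variables \eqref{change of variables}, the dangerous part of $\NLu\cdot\nabla\Lw$ carries the coefficient $tl(k-l)/(|l|^2+|\eta|^2)$, i.e.\ a factor $t(k-l)$ multiplying $\hLw_{k-l}(\xi-\eta)$ (the $(k-l)$-frequency factor here is $\Lw$, not $\NLw$ as your point (ii) states). You propose to trade this derivative for $\iDt$-weights via \eqref{eq4.6}; but \eqref{eq4.6} gives $|k-l|\lesssim\langle t\rangle\iklt$, hence $t|k-l|\lesssim\langle t\rangle^{2}\iklt$, and the single gain $\langle t\rangle^{-1}\le\nu^{1/6}$ you invoke cannot pay for $\langle t\rangle^{2}$ over $[T_0,\infty)$. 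Nor does the alternative $t|k-l|\le\iklt+|\xi-\eta|$ help, since $\langle\xi-\eta\rangle$ is not controlled by $\iklt$ uniformly in $t$ for the linear solution. The mechanism the paper uses is different and essential: the factor $t$ is absorbed into the heat semigroup of the explicit solution $\Lw$, namely $t\,e^{c\nu^{1/3}\lambda(k-l)t}\iklt^3\ifkl\lesssim\nu^{-1/3}e^{c_0\nu|k-l|^2t^3}\iklt^3\ifkl^2$ as in \eqref{eq4.21}, paying $\nu^{-1/3}$ and an extra power of $\ifkl$ --- affordable only because $\Lw$ carries the full $\ifD^4$ weight and the $e^{c_0\nu|D_x|^2t^3}$ decay of \eqref{eq:lem4.1}. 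Without this step the $I_2$ estimate does not close.

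Second, you locate the echo-cascade/$\Upsilon$-interpolation difficulty in the regime ``$2|k|<|k-l|$, $|l|\ll1$,'' which is internally inconsistent: $2|k|<|k-l|$ forces $|l|\sim|k-l|$, so $|l|$ is not small relative to $|k-l|$ there. The toy-model worst case of Section \ref{sec:idea} ($|l|\in[\f1N,\f2N]$, $|k|,|k-l|\sim1$) lies in the complementary regime $|k-l|\le2|k|$, and that is where the interpolation \eqref{eq4.22} and the constraints $\epsilon>\f{1-\delta}2$, $\kappa<\f{2\epsilon}{1-\delta}-1$ actually enter; in the regime $2|k|<|k-l|$ one has $\ifl\lesssim\ifkl$ and the $\ifl^{-1-\kappa}$ weight inside $\Upsilon$ is extracted directly without interpolation (using instead the $L^1$ control of $\Lw$). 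This is a labeling error rather than a structural one, but as written the plan does not identify where the hypotheses on $\epsilon$ and $\kappa$ are used.
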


\begin{proof}In this proof, we fix the choice of $\kappa \in (0,\f{2\epsilon}{1-\delta}-1 )$ in $\cM_3$.

By taking the inner product of \eqref{eqs:NLw} with $\cM e^{2c\nu^\f13 \lambda(D_x) t}\iDx^{2}\ifD^{2\epsilon} \NLw$, we infer that\begin{align*}
&\f{d}{dt}\|\sqrt{\cM}e^{c\nu^\f13\lambda(D_x) t}\iDx^{m}\ifD^{\epsilon} \NLw\|_{L^2}^2
-2c\nu^\f13 \|\sqrt{\cM}e^{c\nu^\f13\lambda(D_x) t}\iDx\ifD^{\epsilon}\sqrt{\lambda(D_x)}\NLw\|_{L^2}^2 \\
&\qquad\qquad+2\nu\|\sqrt{\cM}e^{c\nu^\f13\lambda(D_x) t}\iDx\ifD^{\epsilon} \nabla \NLw\|_{L^2}^2 \\
&\qquad\qquad+\int_{\R^2} (-\p_t+k\p_\xi)\cM(k,\xi) e^{2c\nu^\f13\lambda(k) t}\langle k\rangle^{2}\ifk^{2\epsilon} |\hNLw(k,\xi)|^2 dkd\xi \\
&\qquad=-2\Re \bigl( (\Lu+\NLu)\cdot\nabla (\Lw+\NLw) \big| \cM e^{2c\nu^\f13\lambda(D_x) t}\iDx^{2}\ifD^{2\epsilon} \NLw \bigr).
\end{align*}
Thanks to \eqref{eq:M1}, \eqref{eq:M2} and \eqref{eq:M3}, we use the fact that $1\leq \cM(k,\xi)\leq C_\kappa$ and $\lambda(k)\leq |k|^\f23$ to deduce that for $c<\f1{16C_\kappa}$, we have 
\begin{equation}\label{eq4.18}
\begin{aligned}
&\f{d}{dt}\|\sqrt{\cM}e^{c\nu^\f13\lambda(D_x) t}\iDx\ifD^{\epsilon} \NLw\|_{L^2}^2
+\nu\|e^{c\nu^\f13\lambda(D_x) t}\iDx\ifD^{\epsilon} \nabla \NLw\|_{L^2}^2
 \\
&\quad+\f{\nu^\f13}{16} \|e^{c\nu^\f13\lambda(D_x) t}\iDx\ifD^{\epsilon} |D_x|^\f13\NLw\|_{L^2}^2
+\|e^{c\nu^\f13\lambda(D_x) t}\iDx\ifD^{\epsilon} \p_x \nabla \NLp\|_{L^2}^2 \\
&\qquad+\|e^{c\nu^\f13\lambda(D_x) t}\iDx\ifD^{\epsilon} \sqrt{\Upsilon(t,D_x,D_y)} \NLw\|_{L^2}^2\\
&\leq 2\Big|\Re \bigl( (\Lu+\NLu)\cdot\nabla (\Lw+\NLw) \big| \cM e^{2c\nu^\f13\lambda(D_x) t}\iDx^{2}\ifD^{2\epsilon} \NLw \bigr)\Big|,
\end{aligned}
\end{equation}
where $\NLp=\Delta^{-1}\NLw$ denotes the stream function.

\begin{itemize}
\item For the treatment of the source term $\Lu\cdot\nabla \Lw$.
\end{itemize}

By applying the Fourier transformation, we write
\begin{align*}
&\Big|\bigl( \Lu\cdot\nabla \Lw \big| \cM e^{2c\nu^\f13\lambda(D_x) t}\iDx^{2}\ifD^{2\epsilon} \NLw \bigr)\Big|\\
=&\Big|\int_{\R^4} \cM(t,k,\xi)e^{2c\nu^\f13\lambda(k)t}\ik^2 \ifk^{2\epsilon} \hNLw_k(\xi) \f{\eta(k-l)-l(\xi-\eta)}{|l|^2+|\eta|^2} \hLw_l(\eta) \hLw_{k-l}(\xi-\eta) dkdld\xi d\eta \Big|.
\end{align*}

When $|k|\geq 1$, similarly to \eqref{source term:LL}, we can use \eqref{change of variables} and \eqref{pointwise inviscid damping} to get
\begin{align*}
&\Big|\int_{|k|\geq 1} \cM(t,k,\xi)e^{2c\nu^\f13\lambda(k)t}\ik^2 \ifk^{2\epsilon} \hNLw_k(\xi) \f{\eta(k-l)-l(\xi-\eta)}{|l|^2+|\eta|^2} \hLw_l(\eta) \hLw_{k-l}(\xi-\eta) dkdld\xi d\eta  \Big| \\
\lesssim & \|e^{c\nu^\f13\lambda(k)t}\ik\hNLw_k(\xi)\|_{L^2_{k,\xi}}\langle t\rangle^{-2}\|e^{c\nu^\f13\lambda(l)t}\ilt^4\ifl^4\hLw_l(\eta)\|_{L^2_{l,\eta}}\\
&\qquad\times\|\iklt^{-2}\|_{L^2_{k-l,\xi-\eta}}\\
&\qquad\qquad \times\|e^{c\nu^\f13\lambda(k-l)t}\iklt^4\hLw_{k-l}(\xi-\eta)\|_{L^2_{k-l,\xi-\eta}}\\
\lesssim & \langle t\rangle^{-2}\|e^{c\nu^\f13\lambda(D_x)t}\iDx\NLw\|_{L^2} \|e^{c\nu^\f13\lambda(D_x)t}\iDx^4\ifD^4\Lw\|_{L^2}^2
\end{align*}

When $|k|<1$, we can use the fact $\ifk^\epsilon\in L^2_k$ to derive similarly
\begin{align*}
&\Big|\int_{|k|< 1} \cM(t,k,\xi)e^{2c\nu^\f13\lambda(k)t}\ik^2 \ifk^{2\epsilon} \hNLw_k(\xi) \f{\eta(k-l)-l(\xi-\eta)}{|l|^2+|\eta|^2} \hLw_l(\eta) \hLw_{k-l}(\xi-\eta) dkdld\xi d\eta \Big| \\
\lesssim & \|\ifk^\epsilon \|_{L^2_k([-1,1])}\|e^{c\nu^\f13\lambda(k)t}\ik\ifk^\epsilon\hNLw_k(\xi)\|_{L^2_{k,\xi}}\langle t\rangle^{-2}\|e^{c\nu^\f13\lambda(l)t}\ilt^4\ifl^4\hLw_l(\eta)\|_{L^2_{l,\eta}}\\
&\times\|\iklt^{-2}\|_{L^2_{\xi-\eta}}\|e^{c\nu^\f13\lambda(k-l)t}\iklt^4\hLw_{k-l}(\xi-\eta)\|_{L^2_{k-l,\xi-\eta}}\\
\lesssim & \langle t\rangle^{-2}\|e^{c\nu^\f13\lambda(D_x) t}\iDx\ifD^\epsilon\NLw\|_{L^2} \|e^{c\nu^\f13\lambda(D_x)t}\iDx^4\ifD^4\Lw\|_{L^2}^2.
\end{align*}

Combining the above two cases, we have proven
\begin{equation}\label{eq4.19}
\begin{aligned}
&\Big|\bigl( \Lu\cdot\nabla \Lw \big| \cM e^{2c\nu^\f13\lambda(D_x) t}\iDx^{2}\ifD^{2\epsilon} \NLw \bigr)\Big| \\
\lesssim & \langle t\rangle^{-2}\|e^{c\nu^\f13\lambda(D_x) t}\iDx\ifD^\epsilon\NLw\|_{L^2} \|e^{c\nu^\f13\lambda(D_x)t}\iDx^4\ifD^4\Lw\|_{L^2}^2.
\end{aligned}
\end{equation}

\begin{itemize}
\item For the treatment of the transport term $\Lu\cdot\nabla\NLw$.
\end{itemize}

Here, we need to use the point-wise inviscid damping of $\Lu$. In view of the different decay rates for the two components of $\Lu$, we first deal with the following term with $\Lu_2\p_y\NLw$:
\begin{align*}
&\Big|\bigl( \Lu_2\p_y \NLw \big| \cM e^{2c\nu^\f13\lambda(D_x) t}\iDx^{2}\ifD^{2\epsilon} \NLw \bigr)\Big|\\
=&\Big|\int_{\R^4} \cM(t,k,\xi)e^{2c\nu^\f13\lambda(k)t}\ik^2 \ifk^{2\epsilon} \hNLw_k(\xi) \f{l(\xi-\eta)}{|l|^2+|\eta|^2} \hLw_l(\eta) \hNLw_{k-l}(\xi-\eta) dkdld\xi d\eta \Big|.
\end{align*}
By applying \eqref{pointwise inviscid damping}, one has that for $|k|<1$,
\begin{align*}
&\Big|\int_{|k|<1} \cM(t,k,\xi)e^{2c\nu^\f13\lambda(k)t}\ik^2 \ifk^{2\epsilon} \hNLw_k(\xi) \f{l(\xi-\eta)}{|l|^2+|\eta|^2} \hLw_l(\eta) \hNLw_{k-l}(\xi-\eta) dkdld\xi d\eta \Big|\\
\lesssim & \|\ifk^\epsilon\|_{L^2_k([-1,1])}\|e^{c\nu^\f13\lambda(k)t}\ifk^\epsilon\hNLw_k(\xi)\|_{L^2_{k,\xi}} \langle t\rangle^{-2}\|\langle \eta+lt\rangle^{-1}\|_{L^2_\eta}\\
&\qquad\qquad \times \|e^{c\nu^\f13\lambda(l)t}\ilt^3\ifl^3\hLw_l(\eta)\|_{L^2_{l,\eta}}\|e^{c\nu^\f13\lambda(k-l)t} (\xi-\eta)\hNLw_{k-l}(\xi-\eta)\|_{L^2_{k-l,\xi-\eta}}\\
\lesssim &\langle t\rangle^{-2}\|e^{c\nu^\f13\lambda(D_x)t} \ifD^\epsilon\NLw\|_{L^2}
\|e^{c\nu^\f13\lambda(D_x)t} \iDt^3\ifD^3\Lw\|_{L^2}\|e^{c\nu^\f13\lambda(D_x)t} \p_y\NLw\|_{L^2},
\end{align*}
and for $|k|\geq 1$,
\begin{align*}
&\Big|\int_{|k|\geq 1} \cM(t,k,\xi)e^{2c\nu^\f13\lambda(k)t}\ik^2 \ifk^{2\epsilon} \hNLw_k(\xi) \f{l(\xi-\eta)}{|l|^2+|\eta|^2} \hLw_l(\eta) \hNLw_{k-l}(\xi-\eta) dkdld\xi d\eta \Big|\\
\lesssim & \|e^{c\nu^\f13\lambda(k)t}\ik\ifk^\epsilon\hNLw_k(\xi)\|_{L^2_{k,\xi}} \langle t\rangle^{-2}\|\ilt^{-2}\|_{L^2_{l,\eta}}\|e^{c\nu^\f13\lambda(l)t}\ilt^4\ifl^3\hLw_l(\eta)\|_{L^2_{l,\eta}}\\
&\qquad\qquad \times \|e^{c\nu^\f13\lambda(k-l)t} \ikl(\xi-\eta)\hNLw_{k-l}(\xi-\eta)\|_{L^2_{k-l,\xi-\eta}}\\
\lesssim &\langle t\rangle^{-2}\|e^{c\nu^\f13\lambda(D_x)t} \iDx\ifD^\epsilon\NLw\|_{L^2}
\|e^{c\nu^\f13\lambda(D_x)t} \iDt^4\ifD^3\Lw\|_{L^2}\\
&\qquad\qquad\times\|e^{c\nu^\f13\lambda(D_x)t}\iDx \p_y\NLw\|_{L^2}.
\end{align*}
The above two cases share the same estimate:
\begin{align*}
&\Big|\bigl( \Lu_2\p_y \NLw \big| \cM e^{2c\nu^\f13\lambda(D_x) t}\iDx^{2}\ifD^{2\epsilon} \NLw \bigr)\Big|\\
\lesssim &\nu^{-\f12}\langle t\rangle^{-2}\|e^{c\nu^\f13\lambda(D_x)t} \iDx\ifD^\epsilon\NLw\|_{L^2}
\|e^{c\nu^\f13\lambda(D_x)t} \iDt^4\ifD^3\Lw\|_{L^2}\\
&\qquad\qquad\times\nu^\f12\|e^{c\nu^\f13\lambda(D_x)t}\iDx \p_y\NLw\|_{L^2}.
\end{align*}

For the term with $\Lu_1\p_x\NLw$, we write
\begin{align*}
&\Big|\bigl( \Lu_1\p_x \NLw \big| \cM e^{2c\nu^\f13\lambda(D_x) t}\iDx^{2}\ifD^{2\epsilon} \NLw \bigr)\Big|\\
=&\Big|\int_{\R^4} \cM(t,k,\xi)e^{2c\nu^\f13\lambda(k)t}\ik^2 \ifk^{2\epsilon} \hNLw_k(\xi) \f{\eta(k-l)}{|l|^2+|\eta|^2} \hLw_l(\eta) \hNLw_{k-l}(\xi-\eta) dkdld\xi d\eta \Big|.
\end{align*}
From $\f\eta{|l|^2+|\eta|^2}$, we can only use \eqref{pointwise inviscid damping} to get $\langle t\rangle^{-1}\leq \nu^\f16$, but we shall see this is enough. When $|k-l|\leq 2|k|$, we can use $|k-l|^\f13\lesssim|k|^\f13$ and $\ifk^\epsilon\lesssim \ifkl^\epsilon$ to get
\begin{align*}
&\Big|\int_{|k-l|\leq 2|k|} \cM(t,k,\xi)e^{2c\nu^\f13\lambda(k)t}\ik^2 \ifk^{2\epsilon} \hNLw_k(\xi) \f{\eta(k-l)}{|l|^2+|\eta|^2} \hLw_l(\eta) \hNLw_{k-l}(\xi-\eta) dkdld\xi d\eta \Big|\\
\lesssim & \|e^{c\nu^\f13\lambda(k)t}\ik\ifk^\epsilon|k|^\f13\hNLw_k(\xi)\|_{L^2_{k,\xi}} \langle t\rangle^{-1}\|\ilt^{-2}\|_{L^2_{l,\eta}}\|e^{c\nu^\f13\lambda(l)t}\ilt^4\ifl^2\hLw_l(\eta)\|_{L^2_{l,\eta}}\\
&\qquad\qquad \times \|e^{c\nu^\f13\lambda(k-l)t} \ikl \ifkl^\epsilon|k-l|^\f23\hNLw_{k-l}(\xi-\eta)\|_{L^2_{k-l,\xi-\eta}}\\
\lesssim &\nu^{-\f13}\|e^{c\nu^\f13\lambda(D_x)t} \iDt^4\ifD^2\Lw\|_{L^2}\bigl(\nu^\f16\|e^{c\nu^\f13\lambda(D_x)t} \iDx\ifD^\epsilon|D_x|^\f13\NLw\|_{L^2}\bigr)^\f32\\
&\qquad\qquad\times\bigl(\nu^\f12\|e^{c\nu^\f13\lambda(D_x)t}\iDx\ifD^\epsilon  \p_x\NLw\|_{L^2}\bigr)^\f12.
\end{align*}
When $2|k|<|k-l|$, one has $\f{|k-l|}2\leq |l|\leq 2|k-l|$, and therefore $\ik\lesssim \il$. Then we have
\begin{align*}
&\Big|\int_{ 2|k|<|k-l|} \cM(t,k,\xi)e^{2c\nu^\f13\lambda(k)t}\ik^2 \ifk^{2\epsilon} \hNLw_k(\xi) \f{\eta(k-l)}{|l|^2+|\eta|^2} \hLw_l(\eta) \hNLw_{k-l}(\xi-\eta) dkdld\xi d\eta \Big|\\
\lesssim & \|\ik^{-1}\ifk^\epsilon\|_{L^2_k}\|e^{c\nu^\f13\lambda(k)t}\ik\ifk^\epsilon\hNLw_k(\xi)\|_{L^2_{k,\xi}} \langle t\rangle^{-1}\|e^{c\nu^\f13\lambda(l)t}\ilt^4\ifl^2\hLw_l(\eta)\|_{L^2_{l,\eta}}\\
&\qquad\qquad \times \|\langle \eta+lt\rangle^{-1}\|_{L^2_\eta}\|e^{c\nu^\f13\lambda(k-l)t} |k-l|^\f13\hNLw_{k-l}(\xi-\eta)\|_{L^2_{k-l,\xi-\eta}}\\
\lesssim &\nu^{-\f16}\langle t\rangle^{-1}\|e^{c\nu^\f13\lambda(D_x)t}\iDx\ifD^\epsilon  \NLw\|_{L^2}\|e^{c\nu^\f13\lambda(D_x)t} \iDt^4\ifD^2\Lw\|_{L^2}\\
&\qquad\qquad\times\nu^\f16\|e^{c\nu^\f13\lambda(D_x)t} |D_x|^\f13\NLw\|_{L^2}.
\end{align*}
Combing the above two cases, we arrive at
\begin{align*}
&\Big|\bigl( \Lu_1\p_x \NLw \big| \cM e^{2c\nu^\f13\lambda(D_x) t}\iDx^{2}\ifD^{2\epsilon} \NLw \bigr)\Big|\\
\lesssim &\langle t\rangle^{-2}\|e^{c\nu^\f13\lambda(D_x)t} \iDt^4\ifD^2\Lw\|_{L^2}\|e^{c\nu^\f13\lambda(D_x)t}\iDx\ifD^\epsilon  \NLw\|_{L^2}^2\\
&+\nu^{-\f13}\|e^{c\nu^\f13\lambda(D_x)t} \iDt^4\ifD^2\Lw\|_{L^2}\\
&\qquad\qquad\times\bigl(\nu^\f13\|e^{c\nu^\f13\lambda(D_x)t} \iDx\ifD^\epsilon|D_x|^\f13\NLw\|_{L^2}^2+\nu\|e^{c\nu^\f13\lambda(D_x)t}\iDx\ifD^\epsilon  \p_x\NLw\|_{L^2}^2\bigr).
\end{align*}

As a summary, for the transport term, we proved that
\begin{equation}\label{eq4.20}
\begin{aligned}
&\Big|\bigl( \Lu\cdot \nabla \NLw \big| \cM e^{2c\nu^\f13\lambda(D_x) t}\iDx^{2}\ifD^{2\epsilon} \NLw \bigr)\Big|\\
\lesssim &\nu^{-\f13}\langle t\rangle^{-2}\|e^{c\nu^\f13\lambda(D_x)t} \iDt^4\ifD^3\Lw\|_{L^2}\|e^{c\nu^\f13\lambda(D_x)t}\iDx\ifD^\epsilon  \NLw\|_{L^2}^2\\
&+\nu^{-\f13}\|e^{c\nu^\f13\lambda(D_x)t} \iDt^4\ifD^3\Lw\|_{L^2}\\
&\quad\times\bigl(\nu^\f13\|e^{c\nu^\f13\lambda(D_x)t} \iDx\ifD^\epsilon|D_x|^\f13\NLw\|_{L^2}^2+\nu\|e^{c\nu^\f13\lambda(D_x)t}\iDx\ifD^\epsilon  \nabla\NLw\|_{L^2}^2\bigr).
\end{aligned}
\end{equation}

\begin{itemize}
\item For the treatment of the reaction term $\NLu\cdot\nabla\Lw$.
\end{itemize}

For this term, we decompose it into two parts as follows:
\begin{align*}
&\Big|\bigl( \NLu\cdot\nabla \Lw \big| \cM e^{2c\nu^\f13\lambda(D_x) t}\iDx^{2}\ifD^{2\epsilon} \NLw \bigr)\Big|\\
=&\Big|\int_{\R^4} \cM(t,k,\xi)e^{2c\nu^\f13\lambda(k)t}\ik^2 \ifk^{2\epsilon} \hNLw_k(\xi) \f{\eta(k-l)-l(\xi-\eta)}{|l|^2+|\eta|^2} \hNLw_l(\eta) \hLw_{k-l}(\xi-\eta) dkdld\xi d\eta \Big|\\
\leq & I_1+I_2,
\end{align*}
where $I_1$ and $I_2$ are defined by 
\begin{align*}
I_1 =& \Big|\int_{\R^4} \cM(t,k,\xi)e^{2c\nu^\f13\lambda(k)t}\ik^2 \ifk^{2\epsilon} \hNLw_k(\xi) \f{\eta(k-l)-l(\xi-\eta+t(k-l))}{|l|^2+|\eta|^2} \hNLw_l(\eta) \\
&\qquad\qquad\qquad\qquad\times\hLw_{k-l}(\xi-\eta) dkdld\xi d\eta \Big|,\\
I_2 =&\Big|\int_{\R^4} \cM(t,k,\xi)e^{2c\nu^\f13\lambda(k)t}\ik^2 \ifk^{2\epsilon} \hNLw_k(\xi) \f{tl(k-l)}{|l|^2+|\eta|^2} \hNLw_l(\eta) \hLw_{k-l}(\xi-\eta) dkdld\xi d\eta \Big|.
\end{align*}

For $I_1$, $|k-l|+|\xi-\eta+t(k-l)|\leq \iklt$ can be handled by $\Lw$, the only possible difficulty is the singularity of $\f1{|l|+|\eta|}$. In the region $|k-l|\leq 2|l|$, we can derive that for $|k|>1$, 
\begin{align*}
&\Big|\int_{|k-l|\leq 2|l|,\ |k|>1} \cM(t,k,\xi)e^{2c\nu^\f13\lambda(k)t}\ik^2 \ifk^{2\epsilon} \hNLw_k(\xi) \f{\eta(k-l)-l(\xi-\eta+t(k-l))}{|l|^2+|\eta|^2}  \\
&\qquad\qquad\qquad\qquad\times\hNLw_l(\eta)\hLw_{k-l}(\xi-\eta) dkdld\xi d\eta \Big|\\
\lesssim & \|e^{c\nu^\f13\lambda(k)t} \ik \hNLw_k(\xi)\|_{L^2_{k,\xi}}
\|e^{c\nu^\f13\lambda(l)t} \il \f{l}{|l|+|\eta|} \hNLw_l(\eta)\|_{L^2_{l,\eta}} \|\iklt^{-2}\|_{L^2_{k-l,\xi-\eta}}\\
&\qquad\qquad\times \|e^{c\nu^\f13\lambda(k-l)t}\iklt^{4} \ifkl^2 |k-l|^\f13 \hLw_{k-l}(\xi-\eta)\|_{L^2_{k-l,\xi-\eta}}\\
\lesssim & \|e^{c\nu^\f13\lambda(D_x)t}\iDx\NLw\|_{L^2}\|e^{c\nu^\f13\lambda(D_x)t}\iDx\p_x\nabla\NLp\|_{L^2}\\
&\qquad\qquad\times\|e^{c\nu^\f13\lambda(D_x)t}\iDt^4\ifD^2|D_x|^\f13\Lw\|_{L^2},
\end{align*}
and for $|k|\leq 1$, 
\begin{align*}
&\Big|\int_{|k-l|\leq 2|l|,\ |k|\leq 1} \cM(t,k,\xi)e^{2c\nu^\f13\lambda(k)t}\ik^2 \ifk^{2\epsilon} \hNLw_k(\xi) \f{\eta(k-l)-l(\xi-\eta+t(k-l))}{|l|^2+|\eta|^2}  \\
&\qquad\qquad\qquad\qquad\times\hNLw_l(\eta)\hLw_{k-l}(\xi-\eta) dkdld\xi d\eta \Big|\\
\lesssim & \|\ifk^\epsilon\|_{L^2_k([-1,1])}\|e^{c\nu^\f13\lambda(k)t} \ik\ifk^\epsilon \hNLw_k(\xi)\|_{L^2_{k,\xi}}
\|e^{c\nu^\f13\lambda(l)t} \il \f{l}{|l|+|\eta|} \hNLw_l(\eta)\|_{L^2_{l,\eta}} \\
&\qquad\qquad\times \|\langle \xi-\eta+t(k-l)\rangle^{-1}\|_{L^2_{\xi-\eta}}\\
&\qquad\qquad\times\|e^{c\nu^\f13\lambda(k-l)t}\iklt^{3} \ifkl^2 |k-l|^\f13 \hLw_{k-l}(\xi-\eta)\|_{L^2_{k-l,\xi-\eta}}\\
\lesssim & \|e^{c\nu^\f13\lambda(D_x)t}\iDx\ifD^\epsilon\NLw\|_{L^2}\|e^{c\nu^\f13\lambda(D_x)t}\iDx\p_x\nabla\NLp\|_{L^2}\\
&\qquad\qquad\times\|e^{c\nu^\f13\lambda(D_x)t}\iDt^3\ifD^2|D_x|^\f13\Lw\|_{L^2}.
\end{align*}
When $2|l|<|k-l|$, one has $\f{|k-l|}2\leq|k|\leq 2|k-l|$ and therefore $\ifk\lesssim \ifkl$. Then, we have
\begin{align*}
&\Big|\int_{2|l|<|k-l|} \cM(t,k,\xi)e^{2c\nu^\f13\lambda(k)t}\ik^2 \ifk^{2\epsilon} \hNLw_k(\xi) \f{\eta(k-l)-l(\xi-\eta+t(k-l))}{|l|^2+|\eta|^2}  \\
&\qquad\qquad\qquad\qquad\times\hNLw_l(\eta)\hLw_{k-l}(\xi-\eta) dkdld\xi d\eta \Big|\\
\lesssim & \|e^{c\nu^\f13\lambda(k)t} \ik |k|^\f13 \hNLw_k(\xi)\|_{L^2_{k,\xi}}
\|e^{c\nu^\f13\lambda(l)t} \il \ifl^\epsilon \hNLw_l(\eta)\|_{L^2_{l,\eta}} \big\|\il^{-1}\ifl^{-\epsilon}\|\f1{|l|+|\eta|}\|_{L^2_\eta}\big\|_{L^2_l} \\
&\qquad\qquad\times\|e^{c\nu^\f13\lambda(k-l)t}\iklt^{2} \ifkl^2 |k-l|^\f13 \hLw_{k-l}(\xi-\eta)\|_{L^2_{k-l,\xi-\eta}}\\
\lesssim & \|e^{c\nu^\f13\lambda(D_x)t}\iDx|D_x|^\f13\NLw\|_{L^2}\|e^{c\nu^\f13\lambda(D_x)t}\iDx\ifD^\epsilon\NLw\|_{L^2}\\
&\qquad\qquad\times\|e^{c\nu^\f13\lambda(D_x)t}\iDt^2\ifD^2|D_x|^\f13\Lw\|_{L^2},
\end{align*}
where we used that for $\epsilon>0$,
$$
\big\|\il^{-1}\ifl^{-\epsilon}\|\f1{|l|+|\eta|}\|_{L^2_\eta}\big\|_{L^2_l}\lesssim \|\il^{-1}\ifl^{-\epsilon}|l|^{-\f12}\|_{L^2_l}\lesssim 1.
$$
Combining the above estimates, we conclude that
\begin{align*}
I_1\lesssim & \nu^{-\f16}\bigl(\|e^{c\nu^\f13\lambda(D_x)t}\iDx\p_x\nabla\NLp\|_{L^2}+\nu^\f16\|e^{c\nu^\f13\lambda(D_x)t}\iDx|D_x|^\f13\NLw\|_{L^2}\bigr)\\
&\qquad\times\|e^{c\nu^\f13\lambda(D_x)t}\iDx\ifD^\epsilon\NLw\|_{L^2}\|e^{c\nu^\f13\lambda(D_x)t}\iDt^4\ifD^2|D_x|^\f13\Lw\|_{L^2}.
\end{align*}

Now, we turn to the real reaction term $I_2$ which is the most difficult term in this paper. First, we consider the region $|k-l|\leq 2|k|$ and derive from $\ifk^\epsilon\lesssim \ifkl^\epsilon$ that
\begin{align*}
&\Big|\int_{|k-l|\leq 2|k|} \cM(t,k,\xi)e^{2c\nu^\f13\lambda(k)t}\ik^2 \ifk^{2\epsilon} \hNLw_k(\xi) \f{tl(k-l)}{|l|^2+|\eta|^2} \hNLw_l(\eta) \hLw_{k-l}(\xi-\eta) dkdld\xi d\eta \Big|\\
\lesssim &\|e^{c\nu^\f13\lambda(l)t}\il\ifl^\epsilon\f{l}{\sqrt{|l|^2+|\eta|^2}}\hNLw_{l}(\eta)\|_{L^2_{l,\eta}}\\
&\qquad\qquad\times\|te^{c\nu^\f13\lambda(k-l)t}\iklt^3 \ifkl\hLw_{k-l}(\xi-\eta)\|_{L^2_{k-l,\xi-\eta}}\\
&\quad\times \bigl(\int_{\R^4} e^{2c\nu^\f13\lambda(k)t} \ik^2 \ifk^{2\epsilon}|k|^{\f23\delta} \f{\ifl^{-2\epsilon}}{(|l|^2+|\eta|^2)\iklt^2}|\hNLw_k(\xi)|^2 dkdld\xi d\eta\bigr)^\f12.
\end{align*}
Then, on the one hand, one can obtain from $\lambda(k-l)\leq |k-l|^\f23$ that
\begin{equation}\label{eq4.21}
\begin{aligned}
&\|te^{c\nu^\f13\lambda(k-l)t}\iklt^3 \ifkl\hLw_{k-l}(\xi-\eta)\|_{L^2_{k-l,\xi-\eta}} \\
\lesssim & \nu^{-\f13} \|c\nu^\f13|k-l|^\f23 t e^{c\nu^\f13|k-l|^\f23t}\iklt^3 \ifkl^2 \hLw_{k-l}(\xi-\eta)\|_{L^2_{k-l,\xi-\eta}} \\
\lesssim & \nu^{-\f13} \|e^{c_0\nu |k-l|^2t^3}\iklt^3 \ifkl^2 \hLw_{k-l}(\xi-\eta)\|_{L^2_{k-l,\xi-\eta}}.
\end{aligned}
\end{equation}
On the other hand, one can use the convolution properties of the Poisson kernel to compute
\begin{equation}\label{eq4.22}
\begin{aligned}
&\int_{\R^2} \f{\ifl^{-2\epsilon}}{(|l|^2+|\eta|^2)\iklt^2} d\eta dl \\
\lesssim & \int_{\R} \ifl^{-2\epsilon}\f1{|l|} \f{1+|l|+|k-l|}{(1+|k-l|)(\langle l,k-l\rangle^2+|\xi+t(k-l)|^2)}dl \\
\lesssim & \bigl( \int_{\R} \ifl^{-1-\kappa}\f1{|l|} \f{1+|l|+|k-l|}{(1+|k-l|)(\langle l,k-l\rangle^2+|\xi+t(k-l)|^2)}dl \bigr)^{1-\delta}
\bigl( \int_{\R} \ifl^{-\alpha}\f1{|l|} \langle l\rangle^{-1} dl \bigr)^{\delta}\\
\lesssim & \Upsilon(t,k,\xi)^{1-\delta},
\end{aligned}
\end{equation}
where $\alpha=\f{2\epsilon-(1-\delta)(1+\kappa)}\delta$ satisfies
$$
(1-\delta)(-1-\kappa)+\delta(-\alpha)=-2\epsilon,
$$
and we can choose $0<\kappa<\f{2\epsilon}{1-\delta}-1$ so that $\alpha>0$, when $\epsilon>\f{1-\delta}2$. Using the above two estimates, we arrive at
\begin{align*}
&\Big|\int_{|k-l|\leq 2|k|} \cM(t,k,\xi)e^{2c\nu^\f13\lambda(k)t}\ik^2 \ifk^{2\epsilon} \hNLw_k(\xi) \f{tl(k-l)}{|l|^2+|\eta|^2} \hNLw_l(\eta) \hLw_{k-l}(\xi-\eta) dkdld\xi d\eta \Big|\\
\lesssim &\|e^{c\nu^\f13\lambda(l)t}\il\ifl^\epsilon\f{l}{\sqrt{|l|^2+|\eta|^2}}\hNLw_{l}(\eta)\|_{L^2_{l,\eta}}\\
&\qquad\qquad\times\nu^{-\f13} \|e^{c_0\nu|k-l|^2t^3}\iklt^3 \ifkl^2 \hLw_{k-l}(\xi-\eta)\|_{L^2_{k-l,\xi-\eta}}\\
&\qquad\qquad\qquad\qquad\times 
\| e^{c\nu^\f13\lambda(k)t} \ik \ifk^\epsilon |k|^{\f\delta3} \Upsilon(t,k,\xi)^{\f{1-\delta}2} \hNLw_k(\xi)\|_{L^2_{k,\xi}}\\
\lesssim &\nu^{-\f13-\f{\delta}6} \|e^{c\nu^\f13\lambda(D_x)}\iDx\ifD^\epsilon \p_x\nabla \NLp\|_{L^2} \|e^{c_0\nu |D_x|^2t^3}\iDt^3\ifD^2\Lw\|_{L^2}\\
&\qquad\qquad\times \bigl( \nu^\f16 \|e^{c\nu^\f13\lambda(D_x)t}\iDx\ifD^\epsilon|D_x|^\f13 \NLw\|_{L^2} \bigr)^\delta \|e^{c\nu^\f13\lambda(D_x)t}\iDx\ifD^\epsilon \sqrt{\Upsilon} \NLw\|_{L^2}^{1-\delta}.
\end{align*}
When $2|k|<|k-l|$, one has $\f{|k-l|}2\leq|l|\leq 2|k-l|$ and therefore
$\ifl\lesssim \ifkl$. In such a case, we do not need to use the tricks in \eqref{eq4.22}, and it follows from a similar argument as \eqref{eq4.21} that
\begin{align*}
&\Big|\int_{2|k|<|k-l|} \cM(t,k,\xi)e^{2c\nu^\f13\lambda(k)t}\ik^2 \ifk^{2\epsilon} \hNLw_k(\xi) \f{tl(k-l)}{|l|^2+|\eta|^2} \hNLw_l(\eta) \hLw_{k-l}(\xi-\eta) dkdld\xi d\eta \Big|\\
\lesssim &\|\ik^{-1}\ifk^\epsilon\|_{L^2_k}\|e^{c\nu^\f13\lambda(l)t}\il\ifl^\epsilon\f{l}{\sqrt{|l|^2+|\eta|^2}}\hNLw_{l}(\eta)\|_{L^2_{l,\eta}}\|\langle \xi-\eta+t(k-l) \rangle^{-1}\|_{L^2_\xi}\\
&\qquad\qquad\times\|te^{c\nu^\f13\lambda(k-l)t}\iklt^4 \ifkl\hLw_{k-l}(\xi-\eta)\|_{L^\oo_{k-l,\xi-\eta}}\\
&\quad\times \bigl(\int_{\R^4} e^{2c\nu^\f13\lambda(k)t} \ik^2 \ifk^{2\epsilon} \f{\ifl^{-1-\kappa}}{(|l|^2+|\eta|^2)\iklt^2}|\hNLw_k(\xi)|^2 dkdld\xi d\eta\bigr)^\f12\\
\lesssim &\|e^{c\nu^\f13\lambda(l)t}\il\ifl^\epsilon\f{l}{\sqrt{|l|^2+|\eta|^2}}\hNLw_{l}(\eta)\|_{L^2_{l,\eta}}\\
&\qquad\qquad\times\nu^{-\f13} \|e^{c_0\nu|k-l|^2t^3}\iklt^4 \ifkl^2 \hLw_{k-l}(\xi-\eta)\|_{L^\oo_{k-l,\xi-\eta}}\\
&\qquad\qquad\qquad\qquad\times 
\| e^{c\nu^\f13\lambda(k)t} \ik \ifk^\epsilon  \sqrt{\Upsilon(t,k,\xi)} \hNLw_k(\xi)\|_{L^2_{k,\xi}}\\
\lesssim &\nu^{-\f13} \|e^{c\nu^\f13\lambda(D_x)}\iDx\ifD^\epsilon \p_x\nabla \NLp\|_{L^2} \|e^{c_0\nu|D_x|^2t^3}\iDt^4\ifD^2\Lw\|_{L^1}\\
&\qquad\qquad\times  \|e^{c\nu^\f13\lambda(D_x)t}\iDx\ifD^\epsilon \sqrt{\Upsilon} \NLw\|_{L^2}.
\end{align*}
Combining the above two cases, we arrive at
\begin{align*}
I_2 \lesssim & \nu^{-\f13-\f\delta6}\|e^{c\nu^\f13\lambda(D_x)}\iDx\ifD^\epsilon \p_x\nabla \NLp\|_{L^2} \|e^{c_0\nu|D_x|^2t^3}\iDt^4\ifD^2\Lw\|_{L^1\cap L^2}\\
&\qquad\qquad\times \bigl(\|e^{c\nu^\f13\lambda(D_x)t}\iDx\ifD^\epsilon \sqrt{\Upsilon} \NLw\|_{L^2}+ \nu^\f16 \|e^{c\nu^\f13\lambda(D_x)t}\iDx\ifD^\epsilon|D_x|^\f13 \NLw\|_{L^2} \bigr).
\end{align*}

In all, we add together the estimates for $I_1$ and $I_2$ to summarise 
\begin{equation}\label{eq4.23}
\begin{aligned}
&\Big|\bigl( \NLu\cdot\nabla \Lw \big| \cM e^{2c\nu^\f13\lambda(D_x) t}\iDx^{2}\ifD^{2\epsilon} \NLw \bigr)\Big|\\
\lesssim &\nu^{-\f16}\bigl(\|e^{c\nu^\f13\lambda(D_x)t}\iDx\p_x\nabla\NLp\|_{L^2}+\nu^\f16\|e^{c\nu^\f13\lambda(D_x)t}\iDx|D_x|^\f13\NLw\|_{L^2}\bigr)\\
&\qquad\times\|e^{c\nu^\f13\lambda(D_x)t}\iDx\ifD^\epsilon\NLw\|_{L^2}\|e^{c\nu^\f13\lambda(D_x)t}\iDt^4\ifD^2|D_x|^\f13\Lw\|_{L^2}\\
& +\nu^{-\f13-\f\delta6}\|e^{c\nu^\f13\lambda(D_x)}\iDx\ifD^\epsilon \p_x\nabla \NLp\|_{L^2} \|e^{c_0\nu|D_x|^2t^3}\iDt^4\ifD^2\Lw\|_{L^1\cap L^2}\\
&\qquad\times \bigl(\|e^{c\nu^\f13\lambda(D_x)t}\iDx\ifD^\epsilon \sqrt{\Upsilon} \NLw\|_{L^2}+ \nu^\f16 \|e^{c\nu^\f13\lambda(D_x)t}\iDx\ifD^\epsilon|D_x|^\f13 \NLw\|_{L^2} \bigr).
\end{aligned}
\end{equation}

\begin{itemize}
\item For the treatment of the nonlinear term $\NLu\cdot\nabla\NLw$.
\end{itemize}

The only differences between this term and the nonlinear term in \eqref{eq3.1} with $m=1$ are the enhanced dissipation rate between $\lambda(D_x)$ and $|D_x|^\f23$, as well as the Fourier multiplier $\cM$. However, in the proof of \eqref{eq3.3} and \eqref{eq3.4}, we only used $|k|^\f23\leq |l|^\f23+|k-l|^\f23$ and the boundedness of $\cM$. Since $\lambda(k)\leq \lambda(l)+\lambda(k-l)$ and the $\cM$ here is still bounded, one can follow the arguments in \eqref{eq3.3} and \eqref{eq3.4}, to prove
\begin{equation}\label{eq4.24}
\begin{aligned}
&\Big|\bigl( \NLu\cdot\nabla \NLw \big| \cM e^{2c\nu^\f13\lambda(D_x) t}\iDx^{2}\ifD^{2\epsilon} \NLw \bigr)\Big| \\
\lesssim &  \nu^{-\f12}\|e^{c\nu^\f13\lambda(D_x) t}\iDx\ifD^{\epsilon} \NLw\|_{L^2}
\bigl( \|e^{c\nu^\f13\lambda(D_x) t}\iDx\ifD^{\epsilon} \p_x\nabla\NLp\|_{L^2}^2  \\
&\qquad+\nu\|e^{c\nu^\f13\lambda(D_x) t}\iDx\ifD^{\epsilon} \nabla\NLw\|_{L^2}^2 + \nu^\f13\|e^{c\nu^\f13\lambda(D_x) t}\iDx\ifD^{\epsilon} |D_x|^\f13\NLw\|_{L^2}^2\bigr).
\end{aligned}
\end{equation}

Now, we take \eqref{eq4.19}, \eqref{eq4.20}, \eqref{eq4.23} and \eqref{eq4.24} into \eqref{eq4.18}, and then integrate over $[T_0,T]$ to find
\begin{align*}
&\|e^{c\nu^\f13\lambda(D_x) t}\iDx\ifD^{\epsilon} \NLw\|_{L^\oo_{[T_0,T]}L^2}^2
+\nu\|e^{c\nu^\f13\lambda(D_x) t}\iDx\ifD^{\epsilon} \nabla \NLw\|_{L^2_{[T_0,T]}L^2}^2
 \\
&\quad+\nu^\f13 \|e^{c\nu^\f13\lambda(D_x) t}\iDx\ifD^{\epsilon} |D_x|^\f13\NLw\|_{L^2_{[T_0,T]}L^2}^2
+\|e^{c\nu^\f13\lambda(D_x) t}\iDx\ifD^{\epsilon} \p_x \nabla \NLp\|_{L^2_{[T_0,T]}L^2}^2 \\
&\qquad+\|e^{c\nu^\f13\lambda(D_x) t}\iDx\ifD^{\epsilon} \sqrt{\Upsilon(t,D_x,D_y)} \NLw\|_{L^2_{[T_0,T]}L^2}^2\\
&\lesssim \|e^{c\nu^\f13\lambda(D_x) T_0}\iDx\ifD^{\epsilon} \NLw(T_0)\|_{L^2}^2\\
&\qquad+\|\langle t\rangle^{-2}\|_{L^1_{[T_0,T]}}\|e^{c\nu^\f13\lambda(D_x) t}\iDx\ifD^\epsilon\NLw\|_{L^\oo_{[T_0,T]}L^2} \|e^{c\nu^\f13\lambda(D_x)t}\iDx^4\ifD^4\Lw\|_{L^\oo_{[T_0,T]}L^2}^2\\
&+\nu^{-\f13}\|\langle t\rangle^{-2}\|_{L^1_{[T_0,T]}}\|e^{c\nu^\f13\lambda(D_x)t} \iDt^4\ifD^3\Lw\|_{L^\oo_{[T_0,T]}L^2}\\
&\times\|e^{c\nu^\f13\lambda(D_x)t}\iDx\ifD^\epsilon  \NLw\|_{L^\oo_{[T_0,T]}L^2}^2+\nu^{-\f13}\|e^{c\nu^\f13\lambda(D_x)t} \iDt^4\ifD^3\Lw\|_{L^\oo_{[T_0,T]}L^2}\\
&\quad\times\bigl(\nu^\f13\|e^{c\nu^\f13\lambda(D_x)t} \iDx\ifD^\epsilon|D_x|^\f13\NLw\|_{L^2_{[T_0,T]}L^2}^2+\nu\|e^{c\nu^\f13\lambda(D_x)t}\iDx\ifD^\epsilon  \nabla\NLw\|_{L^2_{[T_0,T]}L^2}^2\bigr)\\
&+\nu^{-\f16}\bigl(\|e^{c\nu^\f13\lambda(D_x)t}\iDx\p_x\nabla\NLp\|_{L^2_{[T_0,T]}L^2}+\nu^\f16\|e^{c\nu^\f13\lambda(D_x)t}\iDx|D_x|^\f13\NLw\|_{L^2_{[T_0,T]}L^2}\bigr)\\
&\times\|e^{c\nu^\f13\lambda(D_x)t}\iDx\ifD^\epsilon\NLw\|_{L^\oo_{[T_0,T]}L^2}\|e^{c\nu^\f13\lambda(D_x)t}\iDt^4\ifD^2|D_x|^\f13\Lw\|_{L^2_{[T_0,T]}L^2}\\
& +\nu^{-\f13-\f\delta6}\|e^{c\nu^\f13\lambda(D_x)}\iDx\ifD^\epsilon \p_x\nabla \NLp\|_{L^2_{[T_0,T]}L^2} \\
&\qquad\times \|e^{c_0\nu|D_x|^2t^3}\iDt^4\ifD^2\Lw\|_{L^\oo_{[T_0,T]}(L^1\cap L^2)}\\
&\times \bigl(\|e^{c\nu^\f13\lambda(D_x)t}\iDx\ifD^\epsilon \sqrt{\Upsilon} \NLw\|_{L^2_{[T_0,T]}L^2}+ \nu^\f16 \|e^{c\nu^\f13\lambda(D_x)t}\iDx\ifD^\epsilon|D_x|^\f13 \NLw\|_{L^2_{[T_0,T]}L^2} \bigr)\\
&+\nu^{-\f12}\|e^{c\nu^\f13\lambda(D_x) t}\iDx\ifD^{\epsilon} \NLw\|_{L^\oo_{[T_0,T]}L^2}
\bigl( \|e^{c\nu^\f13\lambda(D_x) t}\iDx\ifD^{\epsilon} \p_x\nabla\NLp\|_{L^2_{[T_0,T]}L^2}^2  \\
&\quad+\nu\|e^{c\nu^\f13\lambda(D_x) t}\iDx\ifD^{\epsilon} \nabla\NLw\|_{L^2_{[T_0,T]}L^2}^2 + \nu^\f13\|e^{c\nu^\f13\lambda(D_x) t}\iDx\ifD^{\epsilon} |D_x|^\f13\NLw\|_{L^2_{[T_0,T]}L^2}^2\bigr).
\end{align*}
This together with \eqref{eq:lem4.1} and 
$
\|\langle t\rangle^{-2}\|_{L^1_{[T_0,T]}}\lesssim \nu^\f16
$
finishes the proof.
\end{proof}

\subsection{Proof of Theorem \ref{thm2}}

We are now in a position to present the proof of Theorem \ref{thm2}.

\begin{proof}[Proof of Theorem \ref{thm2}] In this proof, we fix $\delta$, $\epsilon$ and $c$ such that $\epsilon>\f{1-\delta}2$ and $c<c_0$, therefore the conditions of Proposition \ref{prop4.4} are fulfilled. We shall always denote $T_0=\nu^{-\f16}$.

From Corollary \ref{col4.1}, we know 
\begin{equation}\label{eq4.26}
\|e^{c\nu^\f13\lambda(D_x)t}\iDx \ifD^\epsilon \NLw\|_{L^\oo_{T_0} L^2}\leq C\nu^{\f23+2\delta}.
\end{equation}
Taking the above estimate into \eqref{eq:prop4.4}, we find
\begin{equation}\label{eq4.27}
\begin{aligned}
&\Bigl(1-C\nu^{\f16+\delta}\Bigr)\|e^{c\nu^\f13\lambda(D_x) t}\iDx\ifD^{\epsilon} \NLw\|_{L^\oo_{[T_0,T]}L^2}^2 \\
&+\Bigl(1-C\nu^{\f56\delta}-C\nu^{-\f12}\|e^{c\nu^\f13\lambda(D_x) t}\iDx\ifD^{\epsilon} \NLw\|_{L^\oo_{[T_0,T]}L^2}\Bigr)\\
&\quad\times\Bigl(\nu\|e^{c\nu^\f13\lambda(D_x) t}\iDx\ifD^{\epsilon} \nabla \NLw\|_{L^2_{[T_0,T]}L^2}^2+\nu^\f13 \|e^{c\nu^\f13\lambda(D_x) t}\iDx\ifD^{\epsilon} |D_x|^\f13\NLw\|_{L^2_{[T_0,T]}L^2}^2
\\
&\qquad+\|e^{c\nu^\f13\lambda(D_x) t}\iDx\ifD^{\epsilon} \p_x \nabla \NLp\|_{L^2_{[T_0,T]}L^2}^2+\|e^{c\nu^\f13\lambda(D_x) t}\iDx\ifD^{\epsilon} \sqrt{\Upsilon} \NLw\|_{L^2_{[T_0,T]}L^2}^2\Bigr)\\
&\leq C\nu^{\f43+4\delta}+C\nu^{\f56+2\delta}\|e^{c\nu^\f13\lambda(D_x) t}\iDx\ifD^\epsilon\NLw\|_{L^\oo_{[T_0,T]}L^2}.
\end{aligned}
\end{equation}

Now, we fix $C>1$ to be the large constant in the above inequality, and define the following timespan:
\begin{equation}\label{def:T3}
T^\# \eqdef \sup\big\{ T>T_0 \big|\ \|e^{c\nu^\f13\lambda(D_x) t}\iDx\ifD^{\epsilon} \NLw\|_{L^\oo_{[T_0,T]}L^2}\leq 4C\nu^{\f23+2\delta}\big\}.
\end{equation}
From \eqref{eq4.26}, we know $T^\#>T_0$.

Now, by taking $\nu$ small enough such that $C\nu^{\f16+\delta}\leq \f12$, and for $T\leq T^\#$
$$
C\nu^{\f56\delta}+C\nu^{-\f12}\|e^{c\nu^\f13\lambda(D_x) t}\iDx\ifD^{\epsilon} \NLw\|_{L^\oo_{[T_0,T]}L^2}
\leq C\nu^{\f56\delta}+4C^2\nu^{\f16+2\delta}\leq \f12,
$$
and
$$
C\nu^{\f56+2\delta}\|e^{c\nu^\f13\lambda(D_x) t}\iDx\ifD^\epsilon\NLw\|_{L^\oo_{[T_0,T]}L^2}
\leq 4C^2\nu^{\f32+4\delta}\leq C\nu^{\f43+4\delta}.
$$
Then, one can drive from \eqref{eq4.27} that for any $T\leq T^\#$,
\begin{equation}\label{eq4.29}
\begin{aligned}
&\|e^{c\nu^\f13\lambda(D_x) t}\iDx\ifD^{\epsilon} \NLw\|_{L^\oo_{[T_0,T]}L^2}^2 
+\nu\|e^{c\nu^\f13\lambda(D_x) t}\iDx\ifD^{\epsilon} \nabla \NLw\|_{L^2_{[T_0,T]}L^2}^2\\
&+\nu^\f13 \|e^{c\nu^\f13\lambda(D_x) t}\iDx\ifD^{\epsilon} |D_x|^\f13\NLw\|_{L^2_{[T_0,T]}L^2}^2
+\|e^{c\nu^\f13\lambda(D_x) t}\iDx\ifD^{\epsilon} \p_x \nabla \NLp\|_{L^2_{[T_0,T]}L^2}^2\\
&+\|e^{c\nu^\f13\lambda(D_x) t}\iDx\ifD^{\epsilon} \sqrt{\Upsilon} \NLw\|_{L^2_{[T_0,T]}L^2}^2
\leq 4C\nu^{\f43+4\delta},
\end{aligned}
\end{equation}
which implies
$$
\|e^{c\nu^\f13\lambda(D_x) t}\iDx\ifD^{\epsilon} \NLw\|_{L^\oo_{[T_0,T]}L^2}
\leq 2\sqrt{C}\nu^{\f23+2\delta}\leq 2C\nu^{\f23+2\delta}.
$$

Together with \eqref{def:T3}, one can use a standard bootstrap argument to show $T^\#=+\oo$. \eqref{eq1.7} follows from \eqref{eq:lem4.1}, \eqref{eq4.26} and \eqref{eq4.29}. Also, we can use \eqref{pointwise inviscid damping} and \eqref{eq:lem4.1} to see
$$
\| e^{c\nu^\f13 \lambda(D_x) t} \iDx \ifD^\epsilon \p_x \Lu(t)\|_{L^2}
\lesssim \langle t\rangle^{-1} \| e^{c\nu^\f13 \lambda(D_x) t} \iDx^4\ifD^3\Lw(t)\|_{L^2}\lesssim \nu^{\f13+\delta}\langle t\rangle^{-1},
$$
and it follows from \eqref{eq:col4.1} that
$$
\| e^{c\nu^\f13 \lambda(D_x) t} \iDx \ifD^\epsilon \p_x \NLu(t)\|_{L^2_{T_0}L^2}
\lesssim \sqrt{T_0}\| e^{c\nu^\f13 \lambda(D_x) t} \iDx \ifD^\epsilon  \NLw(t)\|_{L^\oo_{T_0}L^2}
\lesssim \nu^{\f7{12}+2\delta}.
$$
The above two estimates together with \eqref{eq4.29} prove \eqref{eq1.8}.
\end{proof}

\section*{Acknowledgments}
This work was done when N. Liu was visiting NYUAD in 2024, and he gratefully acknowledged the support received from AMSS and NYUAD during the visit.

\bibliographystyle{siam.bst} 
\bibliography{references.bib}

\end{document}